\NeedsTeXFormat{LaTeX2e}
[2021/04/21]
\documentclass[article]{amsart}
\usepackage{amsmath}
\usepackage{amssymb}
\usepackage{amsthm}
\usepackage{amscd}
\usepackage{amsfonts}
\usepackage{amssymb}
\usepackage{mathrsfs}
\usepackage{geometry}
\usepackage{enumerate}
\usepackage{xcolor}
\newgeometry{top=2cm,bottom=2cm,left=2cm,right=2cm}

\allowdisplaybreaks[1]

\pagestyle{headings}
\newtheorem{thm}{Theorem}[section]
\newtheorem{cor}[thm]{Corollary}
\newtheorem{lem}[thm]{Lemma}

\newtheorem{prop}[thm]{Proposition}

\newtheorem{defn}[thm]{Definition}
\numberwithin{equation}{section}
\newtheorem{rem}{Remark}[section]
\numberwithin{equation}{section}

\let\norm=\enVert

\def\ol#1{\overline{#1}}

\def\wave#1{\widetilde{#1}}
\def\bs#1{\boldsymbol{#1}}
\def\lcase#1{\MakeLowercase{#1}}

\newcommand{\D}{\mathcal{D}}

\newcommand{\US}{\mathfrak{A}}

\newcommand{\fcZ}{\mathcal{Z}}

\def\f#1{\mathfrak{#1}}
\newcommand{\fS}{\f{S}}
\newcommand{\CL}{\mathcal{C}}

\newcommand{\ZZ}{\mathbb{Z}}
\newcommand{\QQ}{\mathbb{Q}}
\newcommand{\ZG}{{\mathbb{Z}}_2}
\newcommand{\NN}{\mathbb{N}}

\newcommand{\RR}{R}

\newcommand{\Qv}{\QQ(v)}
\newcommand{\wQv}{\wave{{\QQ}(v)}}

\def\MN(#1){ M_{#1}(\mathbb{N})}
\def\MNR(#1,#2){ M_{#1}(\mathbb{N})_{#2}}
\def\MNS(#1){ M_{#1}(\mathbb{N})^{\pm}}
\def\MZ(#1){ M_{#1}(\ZG)}
\def\NZ(#1){ (\NN|\ZG)^{#1}}
\def\NZST(#1,#2,#3){ (\NN|\ZG)^{#1}_{#2|#3}}
\def\NZS(#1,#2){ {\NN}^{#1}_{#2}}
\def\MNZ(#1,#2){ M_{#1}(\NN | \ZG)_{#2}}
\def\MNZN(#1){ M_{#1}(\NN | \ZG)}
\def\CMNZ(#1,#2,#3){\Lambda(#1,#2|#3)}
\def\CMN(#1,#2){\Lambda(#1,#2)}
\def\CMNP(#1,#2){\Lambda_{#1,#2}}
\def\MNZNS(#1){ \MNZN(#1)^{\pm}}

\def\SE#1{{#1}^{\ol{0}}}
\def\SO#1{{#1}^{\ol{1}}}
\def\DSE#1{{#1}_{\ol{0}}}

\def\SEE#1{{#1}^{\ol{0}}}
\def\SOE#1{{#1}^{\ol{1}}}
\def\SUP#1{\SE{#1}|\SO{#1}}
\def\SS(#1,#2){{#1}^{\ol{#2}}}
\def\SSE(#1,#2){{#1}^{\ol{#2}}}

\def\Qqs(#1,#2){ \mathcal{Q}({#1},{#2}) }

\newcommand{\Heck}{\mathcal{H}_{r,\RR}}
\newcommand{\HCR}{\mathcal{H}^c_{r,\RR}}

\newcommand{\Qqnr}{\mathcal{Q}_{\lcase{q}}{(\lcase{n},\lcase{r})}}

\def\Qvs(#1){\mathcal{Q}_{\lcase{v}}{(\lcase{#1})}}

\newcommand{\SQvnR}{{\widetilde{\mathcal{Q}}}_{\lcase{v}}{(\lcase{n})}}

\newcommand{\SQvnrR}{{\widetilde{\mathcal{Q}}}_{\lcase{v}}(\lcase{n},\lcase{r})}

\newcommand{\SQvnrRZ}{{\SQvnrR}_{\fcZ}}

\newcommand{\USnv}{{\US[n]}_{\lcase{v}}}

\newcommand{\ep}{\epsilon}

\def\qn{\mathfrak{\lcase{q}}_n}

\def\Uqn{U(\mathfrak{\lcase{q}}_n)}

\def\Uvqn{U_{\lcase{v}}(\mathfrak{\lcase{q}}_{n})}

\def\Uvq(#1){U_{\lcase{v}}(\mathfrak{\lcase{q}}_{\lcase{#1}})}

\newcommand{\Uv}{U_{\lcase{v}}}

\def\USN(#1){{\US[#1]}_{v}}

\newcommand{\dij}{\delta_{i,j}}
\def\SABJR(#1,#2,#3,#4){({#1}|{#2})[\bs{#3}, #4]}
\def\SABJRS(#1,#2,#3,#4){({#1}|{#2})[#3, #4]}
\def\SABJS(#1,#2,#3){({#1}|{#2})[#3]}
\def\SAJRS(#1,#2,#3){{#1}[#2, #3]}
\def\SAJS(#1,#2){{#1}[#2]}
\def\SABJ(#1,#2,#3){({#1}|{#2})[\bs{#3}]}
\def\SAJR(#1,#2,#3){{#1}[\bs{#2}, #3]}
\def\SAJ(#1,#2){{#1}[\bs{#2}]}
\def\ABJR(#1,#2,#3,#4){({#1}|{#2})(\bs{#3}, #4)}
\def\ABJRS(#1,#2,#3,#4){({#1}|{#2})(#3, #4)}
\def\ABJS(#1,#2,#3){({#1}|{#2})(#3)}
\def\AJRS(#1,#2,#3){{#1}(#2, #3)}
\def\AJS(#1,#2){{#1}(#2)}
\def\ABJ(#1,#2,#3){({#1}|{#2})(\bs{#3})}
\def\AJR(#1,#2,#3){{#1}(\bs{#2}, #3)}
\def\AJ(#1,#2){{#1}(\bs{#2})}

\def\snorm#1{\norm{#1}}
\def\STDUE(#1,#2){({#1}+E_{{#2},{#2}+1}-E_{{#2}+1,{#2}+1}|0)}
\def\STDUO(#1,#2){({#1}-E_{{#2}+1,{#2}+1}|E_{{#2},{#2}+1})}
\def\STDLE(#1,#2){({#1}-E_{{#2},{#2}}+E_{{#2}+1,{#2}}|0)}
\def\STDLO(#1,#2){({#1}-E_{{#2},{#2}}|E_{{#2}+1,{#2}})}
\def\STDDE(#1){(D_{#1}|0)}
\def\STDDO(#1,#2){({#1}-E_{{#2},{#2}}|E_{{#2},{#2}})}

\newcommand{\tspan}{\mathrm{span}}

\newcommand{\End}{\mathrm{End}}

\newcommand{\ro}{\mathrm{ro}}
\newcommand{\co}{\mathrm{co}}

\def\SSTEP#1{ {[{#1}]}_{{v}} }
\def\STEP#1{ {[\![{#1}]\!]}_{{q}} }
\def\STEPP#1{{[\![{#1}]\!]}_{{q}^2}}
\def\STEPPD#1{{[\![{#1}]\!]}_{{q},{q}^2}}
\def\STEPPDR#1{{[\![{#1}]\!]}_{{q}^2,{q}}}

\def\VSTEP#1{ {[\![{#1}]\!]}_{{v}^2} }
\def\VSTEPP#1{{[\![{#1}]\!]}_{{v}^4}}

\def\ARGSTEP(#1,#2){ {[\![{#1}]\!]}_{{#2}} }
\def\ARGSSTEP(#1,#2){ {[{#1}]}_{{#2}} }

\newcommand{\where}{\ \bs{|} \ }

\def\wt{\mathrm{wt}}

\newcommand{\spaceintv}{\vspace{1cm}}
\def\intd(#1,#2,#3){\left[\begin{matrix}{#1};{#2}\\{#3}\end{matrix}\right]}
\def\intds(#1,#2){\left[\begin{matrix}{#1}\\{#2}\end{matrix}\right]}
\def\intdss(#1,#2){\intds({#1},{#2})}

\def\parity#1{p({#1})}
\def\homog#1{h({#1})}

\def\TAIJ(#1,#2){ T_{({#1}, {#2})} }
\def\TDIJ(#1,#2){ T_{({#1}, {#2})} }
\def\SAIJ(#1,#2){ S_{({#1}, {#2})} }
\def\SDIJ(#1,#2){ S_{({#1}, {#2})} }
\def\rmText#1{}
\def\rmForm#1{}

\def\AK(#1,#2){ {\overleftarrow{a}}^{#2}_{#1} }
\def\BK(#1,#2){ {\overrightarrow{a}}^{#2}_{#1} }

\def\MNZADD(#1, #2){\SE{#1} + {#2}|\SO{#1}}

\def\TF(#1){ t{({#1})} }
\def\HF(#1){ H{({#1})} }

\newcommand{\cft}{t}

\def\genE{{\mathsf{E}}}
\def\genF{{\mathsf{F}}}
\def\genK{{\mathsf{K}}}

\def\Ad{{A^{\!\star}}}
\def\Bd{{B^{\!\star}}}
\def\Xd{{X^{\!\star}}}
\def\Norm{\partial}

\newcommand{\refdot}{\bs{\cdot}}

\counterwithin{equation}{thm}
\title{The Regular Representation  of the  twisted queer $q$-Schur Superalgebra} 
\author{ Zhenhua Li}
\address{Zhenhua Li,  
College of Information Science and Electronic Engineering, Zhejiang University, Hangzhou,  310058, China
}
\email{zhen-hua.li@qq.com}

\keywords{quantum  queer  superalgebra, braid group action, root vector, PBW   basis}
\subjclass[2020]{17B37, 17A70, 20G42, 20C08}
\begin{document}
\maketitle

\tableofcontents

\spaceintv
\begin{abstract}
We study the representation theory of the quantum queer superalgebra  ${U_{\lcase{v}}(\mathfrak{\lcase{q}}_{n})}$ 
and obtain some properties of the highest weight modules. 
Furthermore, 
based on the realization of  ${U_{\lcase{v}}(\mathfrak{\lcase{q}}_{n})}$,   we study the representation theory of the twisted queer $q$-Schur superalgebra ${{\widetilde{\mathcal{Q}}}_{\lcase{v}}(\lcase{n},\lcase{r})}$,
and obtain the decomposition of its regular module  as a direct sum of irreducible submodules,
which also means ${{\widetilde{\mathcal{Q}}}_{\lcase{v}}(\lcase{n},\lcase{r})}$ is semisimple.

\end{abstract}
\spaceintv
\section{Introduction}\label{sec_introduction}

In the field of algebraic representation theory,
 the study of quantum algebras and their representations has always been a dynamic and productive research area.

Since quantum groups appeared in the mid-1980s, the family of Schur algebras has grown to include $q$-Schur algebras, affine $q$-Schur algebras, and their superalgebra versions, as seen in \cite{DJ}, \cite{DDF}, \cite{DR} and \cite{DW2018}. 
Quantum Schur superalgebras generalize the classical Schur algebras and their quantum analogs by incorporating superalgebra structures.
The study of quantum Schur superalgebras has emerged as a significant area of research within the broader context of quantum algebra and representation theory, 
see \cite{DGW,DR,EK,DW2018, DW2015, DGLW2024}. 
In particular, 
El Turkey and Kujawa provided a presentation of the Schur superalgebra and its quantum analog in \cite{EK}. 
Similarly, Du and Wan provided a presentation of the queer Schur superalgebra and its quantum analog $\Qqnr$ in \cite{DW2015}, 
which is identified as the endomorphism algebra of a direct sum of certain ``$q$-permutation supermodules''
 for the Hecke-Clifford superalgebra $\HCR$. It provides a bridge between the quantum queer superalgebra 
 and the combinatorial aspects of the symmetric group and Clifford algebras, 
opening up new avenues for exploring their connections. 
Based on the research of \cite{DW2015} and \cite{DW2018}, the authors in \cite{DGLW2024} introduced the twisted queer q-Schur superalgebra $\SQvnrR$, 
which could be viewed as a twisted version of $\Qqnr$.

Quantum Schur superalgebras play a crucial role in understanding the representation theory of quantum groups and Lie superalgebras. 
For example, Du and Wan provided a constructible classication of simple polynimial representations
 for queer $q$-Schur suprealgebra over a certain extension of the field of complex rational functions in \cite{DW2018}, 
and the authors in \cite{DGLW2024} investigated the matrix representation of the regular module
 of queer $q$-Schur suprealgebra $\Qqnr$, which generalized the work of \cite{DW2018}. 

The regular representation of a quantum Schur superalgebra 
can be viewed as the algebra acting on itself by left multiplication. 
This representation is not only a powerful tool for studying the algebra's internal structure but also serves as a bridge connecting the algebra to other areas of mathematics, 
such as combinatorics and algebraic geometry.
Recent developments in the study of quantum Schur superalgebras have
 highlighted the importance of understanding their regular representations, 
as these representations provide a rich source of information about the algebra's irreducible modules and their multiplicities, see \cite{DGZ}, \cite{DGLW2024}. In this paper,
 the focus is on the regular representation of the twisted queer $q$-Schur superalgebra $\SQvnrR$, 
 which  is shown to have a module decomposition.

 Olshanski constructed the quantum queer superalgebra $\Uvqn$,
  which is the quantum deformation of the universal enveloping algebra of the queer Lie superalgebra $\qn$, 
  and established the Schur-Olshanski duality (cf. \cite{Ol}). Based on the description of Olshanski,
   the authors of \cite{GJKK} gave a new presentation of $\Uvqn$ in terms of generators and relations, 
   and initiated the study of highest weight $\Uvqn$-modules
    and laid the foundations of the crystal bases theory for the tensor modules of $\Uvqn$.

Highest weight modules are central to quantum group representation theory,
 serving as the primary tool for classifying finite-dimensional irreducible representations, 
 which are parameterized by dominant weights analogous to classical Lie algebras. 
 They underpin the complete reducibility of modules when ${q}$ is not a root of unity, 
 bridging quantum and classical theories through a natural specialization to $q = 1$. 
 These modules also facilitate connections with other fields like quantum invariant theory  and physics. 
 Their structure and properties enable the transfer of classical techniques to quantum settings, 
 making them indispensable for understanding the algebraic, geometric, 
 and physical implications of quantum groups.
 For more details, see \cite[Section 10.1]{CP}.

 In \cite{GJKK},
  the authors use triangular decomposition of {$\Uvqn$}
   and representation theory of quantum Clifford superalgebras 
   to study highest weight modules over {$\Uvqn$}, 
   proving a classical limit theorem (showing correspondence to classical Lie superalgebra modules) 
   and complete reducibility in the category {${\mathcal{O}}^{\ge 0}_{q} $}. 
 This establishes a theoretical bridge between quantum and classical theories, 
 enabling systematic classification of irreducible modules and laying groundwork for applications in quantum physics and algebra.

In this paper, 
based on the realization of $\Uvqn$, 
we study the highest weight modules over $\Uvqn$,
and focuse on the representation theory of  $\Uvqn$ and  $\SQvnrR$.
We  provide  crucial insights into the structures and properties of these algebras and their modules.
For example, any highest weight module of highest weight $\omega$ is shown to be isomorphic to a quotient of the Verma module $V(\omega)$. 
Finite dimensional irreducible weight modules are characterized,
 and it has been proved that they are highest weight modules with highest weights in a specific form.

We organize this paper as follows. We start with recalling the realizations of $\Uvqn$ and $\Uvqn$ and some important  formulas for $\Uvqn$ in Section \ref{sec_realization}. In Section \ref{sec_modules}, 
we define weight modules, highest weight vectors, and highest weight modules for $\Uvqn$. The decomposition of  $\SQvnrR$ as a $U_{v}$-module is investigated in Section \ref{regular}. And we prove that any irreducible submodules of $\SQvnrR$ are polynomial weight modules and highest weight modules.
Finally, in the last section,  we give a complete decomposition to determine  the structure of the highest weight spaces of these irreducible submodules and provide a complete decomposition of the regular representation of $\SQvnrR$ based on the parity of certain indices.
   
   Throughout the paper, let {$n , r \ge 2$} be positive integers,
  and  $v$ and $q$ are indeterminates such that ${v}^2 = {q}$, and {$\fcZ = \ZZ[{v}, {v}^{-1}]$}.
  
\spaceintv
\section{The realization of {$\Uvqn$}}\label{sec_realization}

\subsection{The queer superalgebra and the quantum queer superalgebra}

\ 

In this section,  we recall the realization of {$\Uqn$} in \cite{GLL},
and  the realization of {$\Uvqn$} in \cite{DGLW2025}.

The associative superalgebra of type Q is defined as 
\begin{align}
	Q_{n}(\QQ) = \{
	\left[\begin{matrix}
		C&D\\
		D&C
	\end{matrix}\right] 
	 \in M_{2n}(\QQ)
	 \where 
	 C,D \in M_n(\QQ)
	\} \subset M_{n|n}(\QQ),
	\label{qmatrix}
\end{align}
	where {$\SE{Q_{n}(\QQ)}$} consists of all matrices satisfying $D=0$,
	and {$\SO{Q_{n}(\QQ)}$} consists of all matrices satisfying $C=0$.
	
For any associative superalgebra {$A$},
let {$ \homog{A} = \SE{A} \cup \SO{A}$} be the set of  homogenouse elements of $A$.
For any {$z \in \SS(A, i)$},  
we say the parity of {$z$} is $i$,
denoted as {$\parity{z} = i$}.
If we define the superbracked as:
\begin{equation}\label{def_superbracked}
\begin{aligned}
[x, y] = x y - {(-1)}^{\parity{x} \cdot \parity{y}} y x ,
\end{aligned}
\end{equation}
where {$x, y \in \homog{A}$},
then   $A$ becomes a Lie superalgebra, which is denoted as  {$A^{-}$}.

The queer Lie superalgebra, denoted by $\qn$, 
is the Lie superalgebra arised from {$Q_{n}(\QQ) $}.
We fix $\mathfrak{h}$ to be the standard Cartan subalgebra of $\qn$ consisting of
matrices of the form ~(\ref{qmatrix})~ with
$C$ being arbitrary diagonal and $D=0$.
Observe that the even subalgebra {$\DSE{\mathfrak{h}}$} of $\mathfrak{h}$ can be identified with the standard Cartan subalgebra of $\mathfrak{gl}(n)$.
Let  {$\{ \bs{\ep}_i \where   1 \le i \le n\}$} be the basis for {$\DSE{\mathfrak{h}}^*$} 
and we define a bilinear form $(\cdot,\cdot)$ on {$\DSE{\mathfrak{h}}^*$}
via
\begin{equation}\label{form}
(\bs{\ep}_i,\bs{\ep}_j):=\bs{\ep}_j(h_i)=\delta_{ij}.
\end{equation}
Then with respect to {$\SE{\mathfrak{h}}$}, the queer Lie superalgebra $\qn$ admits the root system
$$
\Phi=\{\alpha_{i,j}:=\bs{\ep}_i-\bs{\ep}_j|1\le i\not=j\le n\},
$$
where $\alpha_i=\alpha_{i,i+1}$ are the simple roots.

We denote
\begin{align*}
	\bs{Q} = \bigoplus_{j=1}^{n-1} \ZZ {\alpha}_j, \qquad
	\bs{Q}^+ = \bigoplus_{j=1}^{n-1} \NN  {\alpha}_j.	
\end{align*}

\begin{defn}
(See \cite[Section 1]{ GJKK} or \cite{LS})
The queer superalgebra {$\Uqn$} over the field {$\QQ$}  is a superalgebra  generated by 
even elements {$h_{i},  e_j,  f_{j} $}  and odd elements {$h_{\ol{i}}, e_{\ol{j}}, f_{\ol{j}} $}
for  {$1 \le i \le n, 1 \le j \le n-1$},
subjecting  to the relations 
\begin{align*}
(QS1) \qquad
&	[h_{i}, h_{j}] =0, \qquad [h_{i}, h_{\ol{j}}] = 0, 
	\qquad [h_{\ol{i}}, h_{\ol{j}}] = \dij 2 h_{i}; \\
(QS2) \qquad 
	&[h_{i}, e_{j}] = (\bs{\ep}_i,  \alpha_j) e_{j} , \qquad
	[f_{i}, e_{\ol{j}}] = (\bs{\ep}_i,  \alpha_j) e_{\ol{j}} , \\
	&[h_{i}, f_{j}] = -(\bs{\ep}_i,  \alpha_j) f_{j} , \qquad
	[h_{i}, f_{\ol{j}}] = -(\bs{\ep}_i,  \alpha_j) f_{\ol{j}} ; \\
(QS3)  \qquad
&[h_{\ol{i}}, e_{j} ] = (\bs{\ep}_i,  \alpha_j)	e_{\ol{j}},  \qquad
[h_{\ol{i}}, f_{j}] = -(\bs{\ep}_i,  \alpha_j) f_{\ol{j}}, \\
&[h_{\ol{i}}, e_{\ol{j}}] =
\left\{
\begin{aligned}
	&e_{j}, \quad \mbox{ if }\quad i=j \mbox{ or } j+1,  \\
	& 0, \quad \mbox{ otherwise },
\end{aligned}
\right.  \qquad
[h_{\ol{i}}, f_{\ol{j}}] =
\left\{
\begin{aligned}
	&f_{j}, \quad \mbox{ if }\quad i=j \mbox{ or } j+1,  \\
	& 0, \quad \mbox{ otherwise };
\end{aligned}
\right. \\
(QS4)  \qquad
	&[e_{i}, f_{j}] = \dij (h_{i} - h_{i+1}), \qquad
	[e_{\ol{i}}, f_{\ol{j}}] = \dij (h_{i} + h_{i+1}), \\
	&[e_{\ol{i}}, f_{j}] = \dij (h_{\ol{i}} - h_{\ol{i+1}}), \qquad
	[e_{i}, f_{\ol{j}}] = \dij (h_{\ol{i}} - h_{\ol{i+1}}); \\
(QS5) \qquad 
	&[e_{i}, e_{\ol{j}}] = [e_{\ol{i}}, e_{\ol{j}}] = [f_{i}, f_{\ol{j}}] = [f_{\ol{i}}, f_{\ol{j}}] = 0, \qquad \mbox{ where } |i-j| \ne 1, \\
	&[e_{i}, e_{j}] = [f_{i}, f_{j}] = 0, \qquad \mbox{ where } |i-j| > 1, \\
	&[e_{i}, e_{i+1}] = [e_{\ol{i}}, e_{\ol{i+1}}] ,
	\qquad [e_{i}, e_{\ol{i+1}}] = [e_{\ol{i}}, e_{i+1}] ,\\
	&[f_{i+1}, f_{i}] = [f_{\ol{i+1}}, f_{\ol{i}}],
	\qquad [f_{i+1}, f_{\ol{i}}] = [f_{\ol{i+1}}, f_{i}]; \\
(QS6) \qquad 
	&[e_{i}, [e_{i}, e_{j}]] = [e_{\ol{i}}, [e_{i}, e_{j}]] = 0,\qquad
 	[f_{i}, [f_{i}, f_{j}]] = [f_{\ol{i}}, [f_{i}, f_{j}]] = 0,
	\quad \mbox{ where } |i-j| = 1.
\end{align*}
\end{defn}

In \cite[Section 4]{Ol}, Olshanski constructed a quantum deformation $\Uvqn$ of the enveloping algebra $\Uqn$ of the queer Lie superalgebra $\qn$ using a modification of the Reshetikhin-Takhtajan-Faddeev method. The following definition of quantum queer superalgebra is from 
 \cite[Proposition 5.2]{DW2015}.

\begin{defn}\label{defqn}
(See \cite{DW2015})
The quantum queer supergroup $\Uvqn$ is the superalgebra
over $\Qv$  generated by
even generators  {${\genK}_{i}$}, {${\genK}_{i}^{-1}$},  {${\genE}_{j}$},  {${\genF}_{j}$},
and odd generators  {${\genK}_{\ol{i}}$},  {${\genE}_{\ol{j}}$}, {${\genF}_{\ol{j}}$},
for   {$ 1 \le i \le n$}, {$ 1 \le j \le n-1$}, subject to the following relations:
\begin{align*}
({\rm QQ1})\quad
&	{\genK}_{i} {\genK}_{i}^{-1} = {\genK}_{i}^{-1} {\genK}_{i} = 1,  \qquad
	{\genK}_{i} {\genK}_{j} = {\genK}_{j} {\genK}_{i} , \qquad
	{\genK}_{i} {\genK}_{\ol{j}} = {\genK}_{\ol{j}} {\genK}_{i}, \\
&	{\genK}_{\ol{i}} {\genK}_{\ol{j}} + {\genK}_{\ol{j}} {\genK}_{\ol{i}}
	= 2 {\delta}_{i,j} \frac{{\genK}_{i}^2 - {\genK}_{i}^{-2}}{{v}^2 - {v}^{-2}}; \\
({\rm QQ2})\quad
& 	{\genK}_{i} {\genE}_{j} = {v}^{\bs{\ep}_i \refdot \alpha_j} {\genE}_{j} {\genK}_{i}, \qquad
	{\genK}_{i} {\genE}_{\ol{j}} = {v}^{\bs{\ep}_i \refdot \alpha_j} {\genE}_{\ol{j}} {\genK}_{i}, \\
& 	{\genK}_{i} {\genF}_{j} = {v}^{-(\bs{\ep}_i \refdot \alpha_j)} {\genF}_{j} {\genK}_{i}, \qquad
	{\genK}_{i} {\genF}_{\ol{j}} = {v}^{-(\bs{\ep}_i \refdot  \alpha_j)} {\genF}_{\ol{j}} {\genK}_{i}; \;
	(\mbox{here }\alpha_j=\bs\ep_j-\bs\ep_{j+1})\\
({\rm QQ3})\quad
& {\genK}_{\ol{i}} {\genE}_{i} - {v} {\genE}_{i} {\genK}_{\ol{i}} = {\genE}_{\ol{i}} {\genK}_{i}^{-1}, \qquad
	{v} {\genK}_{\ol{i}} {\genE}_{i-1} -  {\genE}_{i-1} {\genK}_{\ol{i}} = - {\genK}_{i}^{-1} {\genE}_{\ol{i-1}}, \\
& {\genK}_{\ol{i}} {\genF}_{i} - {v} {\genF}_{i} {\genK}_{\ol{i}} = - {\genF}_{\ol{i}} {\genK}_{i}, \qquad
	{v} {\genK}_{\ol{i}} {\genF}_{i-1} -  {\genF}_{i-1} {\genK}_{\ol{i}} = {\genK}_{i} {\genF}_{\ol{i-1}},\\
& {\genK}_{\ol{i}} {\genE}_{\ol{i}} + {v} {\genE}_{\ol{i}} {\genK}_{\ol{i}} = {\genE}_{i} {\genK}_{i}^{-1}, \qquad
	{v} {\genK}_{\ol{i}} {\genE}_{\ol{i-1}} +  {\genE}_{\ol{i-1}} {\genK}_{\ol{i}} =   {\genK}_{i}^{-1} {\genE}_{i-1}, \\
& {\genK}_{\ol{i}} {\genF}_{\ol{i}} + {v} {\genF}_{\ol{i}} {\genK}_{\ol{i}} =   {\genF}_{i} {\genK}_{i}, \qquad
	{v} {\genK}_{\ol{i}} {\genF}_{\ol{i-1}} +  {\genF}_{\ol{i-1}} {\genK}_{\ol{i}} = {\genK}_{i} {\genF}_{i-1}, \\
& 
 {\genK}_{\ol{i}} {\genE}_{j} - {\genE}_{j} {\genK}_{\ol{i}} =  {\genK}_{\ol{i}} {\genF}_{j} - {\genF}_{j} {\genK}_{\ol{i}}
 = {\genK}_{\ol{i}} {\genE}_{\ol{j}} + {\genE}_{\ol{j}} {\genK}_{\ol{i}} =  {\genK}_{\ol{i}} {\genF}_{\ol{j}} + {\genF}_{\ol{j}} {\genK}_{\ol{i}}
	= 0, \mbox{ for } j \ne i, i-1; \\
({\rm QQ4}) \quad
& {\genE}_{i} {\genF}_{j} - {\genF}_{j} {\genE}_{i}
	= \delta_{i,j}  \frac{{\genK}_{i} {\genK}_{i+1}^{-1} - {\genK}_{i}^{-1}{\genK}_{i+1}}{{v} - {v}^{-1}}, \\
&
{\genE}_{\ol{i}} {\genF}_{\ol{j}} + {\genF}_{\ol{j}} {\genE}_{\ol{i}}
	= \delta_{i,j}  ( \frac{{\genK}_{i} {\genK}_{i+1} - {\genK}_{i}^{-1} {\genK}_{i+1}^{-1}}{{v} - {v}^{-1}}
	 + ({v} - {v}^{-1}) {\genK}_{\ol{i}} {\genK}_{\ol{i+1}} )  ,\\
& {\genE}_{i} {\genF}_{\ol{j}} - {\genF}_{\ol{j}} {\genE}_{i}
	= \delta_{i,j}  ( {\genK}_{i+1}^{-1} {\genK}_{\ol{i}}  - {\genK}_{\ol{i+1}} {\genK}_{i}^{-1} ) , \qquad
 {\genE}_{\ol{i}} {\genF}_{j} - {\genF}_{j} {\genE}_{\ol{i}}
	= \delta_{i,j}  ( {\genK}_{i+1} {\genK}_{\ol{i}}  - {\genK}_{\ol{i+1}} {\genK}_{i} ) ;\\
({\rm QQ5}) \quad
&{\genE}_{\ol{i}}^2 = -\frac{ {v} - {v}^{-1} }{{v} + {v}^{-1}} {\genE}_{i}^2, \quad
	{\genF}_{\ol{i}}^2 = \frac{ {v} - {v}^{-1} }{{v} + {v}^{-1}} {\genF}_{i}^2, \\
&
{\genE}_{i} {\genE}_{\ol{j}} - {\genE}_{\ol{j}} {\genE}_{i}
	=  {\genF}_{i} {\genF}_{\ol{j}} - {\genF}_{\ol{j}} {\genF}_{i}
	= 0 ,  \quad \mbox{ for } |i - j| \ne 1,
\\
& 
{\genE}_{i} {\genE}_{j} - {\genE}_{j} {\genE}_{i} = {\genF}_{i} {\genF}_{j} - {\genF}_{j} {\genF}_{i}
= {\genE}_{\ol{i}}{\genE}_{\ol{j}}  + {\genE}_{\ol{j}}  {\genE}_{\ol{i}}= {\genF}_{\ol{i}} {\genF}_{\ol{j}}  + {\genF}_{\ol{j}} {\genF}_{\ol{i}}
	= 0 \quad \mbox{ for }\quad |i-j|  > 1, 	
\\
& {\genE}_{i} {\genE}_{i+1} - {v} {\genE}_{i+1} {\genE}_{i}
	= {\genE}_{\ol{i}} {\genE}_{\ol{i+1}} + {v} {\genE}_{\ol{i+1}} {\genE}_{\ol{i}}, \qquad
  {\genE}_{i} {\genE}_{\ol{i+1}} - {v} {\genE}_{\ol{i+1}} {\genE}_{i}
	= {\genE}_{\ol{i}} {\genE}_{i+1} - {v} {\genE}_{i+1} {\genE}_{\ol{i}}, \\
& {\genF}_{i} {\genF}_{i+1} - {v} {\genF}_{i+1} {\genF}_{i}
	= - ({\genF}_{\ol{i}} {\genF}_{\ol{i+1}} + {v} {\genF}_{\ol{i+1}} {\genF}_{\ol{i}}), \qquad
  {\genF}_{i} {\genF}_{\ol{i+1}}  - {v} {\genF}_{\ol{i+1}}  {\genF}_{i}
	=  {\genF}_{\ol{i}} {\genF}_{i+1} - {v} {\genF}_{i+1} {\genF}_{\ol{i}} ;\\
({\rm QQ6}) \quad
& {\genE}_{i}^2 {\genE}_{j} - ( {v} + {v}^{-1} ) {\genE}_{i} {\genE}_{j} {\genE}_{i} + {\genE}_{j}  {\genE}_{i}^2 = 0, \qquad
	{\genF}_{i}^2 {\genF}_{j} - ( {v} + {v}^{-1} ) {\genF}_{i} {\genF}_{j} {\genF}_{i} + {\genF}_{j}  {\genF}_{i}^2 = 0, \\
& {\genE}_{i}^2 {\genE}_{\ol{j}} - ( {v} + {v}^{-1} ) {\genE}_{i} {\genE}_{\ol{j}} {\genE}_{i} + {\genE}_{\ol{j}}  {\genE}_{i}^2 = 0, \qquad
	{\genF}_{i}^2 {\genF}_{\ol{j}} - ( {v} + {v}^{-1} ) {\genF}_{i} {\genF}_{\ol{j}} {\genF}_{i} + {\genF}_{\ol{j}}  {\genF}_{i}^2 = 0,  \\
& \qquad \mbox{ where } \quad |i-j| = 1.
\end{align*}
\end{defn}

\begin{rem}\label{induct_all}
By the generating relations of {$\Uvqn$}, we have
\begin{equation}\label{rel_ind_EFK}
\begin{aligned}
{\genE}_{\ol{j}} 
&= - {v} {\genK}_{j+1} {\genK}_{\ol{j+1}} {\genE}_{j} + {v}^{-1} {\genE}_{j} {\genK}_{j+1}  {\genK}_{\ol{j+1}} , \\ 
{\genF}_{\ol{j}} 
&= {v}  {\genK}_{j+1}^{-1} {\genK}_{\ol{j+1}} {\genF}_{j} - {v}^{-1} {\genF}_{j} {\genK}_{j+1}^{-1}  {\genK}_{\ol{j+1}},\\
{\genK}_{\ol{j}}
&=  {v} {\genE}_{j}  {\genK}_{j+1}^{-1} {\genK}_{\ol{j+1}} {\genF}_{j}    {\genK}_{j+1}  
	 - {v}^{-1} {\genE}_{j}  {\genF}_{j} {\genK}_{j+1}^{-1}  {\genK}_{\ol{j+1}}   {\genK}_{j+1}  
		- {v}  {\genK}_{j+1}^{-1} {\genK}_{\ol{j+1}} {\genF}_{j}  {\genE}_{j} {\genK}_{j+1}  \\
		& \qquad 
		+ {v}^{-1} {\genF}_{j} {\genK}_{j+1}^{-1}  {\genK}_{\ol{j+1}} {\genE}_{j} {\genK}_{j+1} 
		+  {\genK}_{j}^{-1} {\genK}_{\ol{j+1}} {\genK}_{j+1} ,
\end{aligned}
\end{equation}
which means  {$\genE_{\ol{j}}$},  {$\genF_{\ol{j}}$},  {$\genK_{\ol{j}}$} 
for all {$1\le j \le n-1$} could be obtained from  the even generators 
and  {$\genK_{\ol{n}}$}.
\end{rem}

\subsection{The twisted queer $q$-Schur superalgebra }
\

The Clifford algebra {$\CL_r$} is an algebra over {$\QQ$} generated by {$c_1, \cdots , c_r$} subject to the relations 
\begin{align}\label{cliff}
	c_i^2 = -1,  \quad c_i c_j = - c_j c_i , \quad \mbox{ for } 1 \le i \ne j \le r.
\end{align}
Let {$\fS_{r}$} be the symmetric group on {$r$} letters, 
with generators  {$ s_{i}=(i, i+1) $} for all {$1 \le i < r$}. 
The Hecke  algebra  {$\Heck$} associated with {$\fS_{r}$}  
is the algebra  over {$\Qv$}  generated by 
{$T_{i} = T_{s_{i}}$} for all  {$1 \le i < r$},
subject to the relations
\begin{align}
	&(T_i - {q} )(T_i + 1) = 0, \qquad T_i T_j = T_j T_i,
	\quad \mbox{ for } 1 \le i, j \le r-1, \quad |i-j| >1;\label{Hecke-1} \\
	&T_i T_{i+1} T_i = T_{i+1} T_i T_{i+1}, \quad \mbox{ for } 1 \le i \le r-2.\label{Hecke-2}
\end{align}
If {$w = s_{i_1} \cdots s_{i_k} \in \fS_r$} is a reduced expression, set {$T_w = T_{i_1} \cdots T_{i_k}$}.
For any  {$J \subseteq \fS_{r}$}, denote 
\begin{align*}
	x_{J} = \sum_{w \in J} T_w.
\end{align*}
Let {$\CMN(n,r)=\{\lambda=(\lambda_1,\lambda_2,\ldots,\lambda_n) \where\lambda_i\in\NN, \ \sum_i\lambda_i=r\} \subset \NN^n$} 
be the set of  compositions of $r$ with $n$ parts.
For any {$\lambda \in \CMN(n,r)$}, 
denote by {$\fS_{\lambda}$} the standard Young subgroup of {$\fS_r$} corresponding to {$\lambda$}, 
and denote
\begin{align}\label{xlambda}
	x_{\lambda} = x_{\fS_{\lambda}} = \sum_{w \in \fS_{\lambda}} T_w.
\end{align}
By \cite[Lemma 7.32]{DDPW}, for any {$T_{i}$} satisfying {$  s_i \in \fS_{\lambda} $}, we have 
\begin{align}\label{Tixlambda}
	T_i x_{\lambda} =x_{\lambda}  T_i = {q} x_{\lambda}.
\end{align} 
The Hecke-Clifford superalgebra  {$\HCR$}  is a  superalgebra  over {$\Qv$}  generated by {$\Heck$} and {$\CL_r$}, 
with  {$T_1, \cdots, T_{r-1}$} as  even  generators and {$c_1, \cdots, c_r$} as odd generators, 
subject to the relations \eqref{cliff}-\eqref{Hecke-2} and the extra relations
\begin{align}
	&T_i c_j = c_j T_i,\qquad \text{ for } 1\leq i\leq r-1, 1\leq j\leq r, j\neq i, i+1\label{Hecke-Cliff1} \\
&T_i c_i = c_{i+1} T_i, \qquad T_i c_{i+1} = c_i T_i - ({q}-1)(c_i - c_{i+1}),\qquad 1 \le i  \le r-1.\label{Hecke-Cliff2}
\end{align}

Following \cite[Section 4]{DW2018}, we introduce the elements in $\HCR$ for $1\leq i\leq j\leq r$:
\begin{align}\label{cqij}
	&c_{{q},i,j} = {q}^{j-i}c_i +  {q}^{j-i-1}c_{i+1} + \cdots + {q} c_{j-1} + c_j,\quad
	&c'_{{q},i,j} = c_i + {q} c_{i+1} + \cdots + {q}^{j-i}c_j.
\end{align}
For  {$\lambda, \alpha \in \NN^n $}, let {$\tilde{\lambda}_k = \sum_{i=1}^{k}  {\lambda}_i$}, and recall   the following elements introduced in \cite[Section 4]{DW2018}: 
\begin{equation}\label{eq_c_a}
\begin{aligned}
	&c^{\alpha}_{\lambda} = {(c_{{q}, 1, \tilde{\lambda}_1})}^{\alpha_1}
				{(c_{ {q}, \tilde{\lambda}_1+1 , \tilde{\lambda}_2} )}^{\alpha_2}
				\cdots
				{(c_{{q}, \tilde{\lambda}_{N-1}+1,  \tilde{\lambda}_n} )}^{\alpha_n}, \\
	&(c^{\alpha}_{\lambda})' = {(c'_{{q}, 1, \tilde{\lambda}_1})}^{\alpha_1}
				{(c'_{{q}, \tilde{\lambda}_1 + 1,  \tilde{\lambda}_2} )}^{\alpha_2}
				\cdots
				{(c'_{{q}, \tilde{\lambda}_{N-1} + 1,  \tilde{\lambda}_n })}^{\alpha_n} .
\end{aligned}
\end{equation}
By \cite[Lemma 4.1 and Corollary 4.3]{DW2018}
we have
\begin{align}\label{xc}
	x_{\lambda} c_{\lambda}^{\alpha} = (c_{\lambda}^{\alpha})'  x_{\lambda}.
\end{align}

Recall the following matrix sets introduced in \cite{DW2015}:
\begin{equation}\label{labelsets}
\aligned
&\MNR(n,r) =\{A=(a_{i,j})\in \MN(n)\mid r=|A|\},\\
	& \MNZN(n) = \{ \Ad=(\SUP{A}) \where \SE{A} \in \MN(n), \SO{A} \in M_n(\ZG) \}, \\
	& \MNZ(n,r) = \{ \Ad=(\SUP{A}) \in \MNZN(n) \where  \SE{A} + \SO{A} \in \MNR(n,r) \}, \\
	& \MNZNS(n,r) = \{ \Ad=(\SUP{A}) \in \MNZN(n) \where  \SE{A}=(\SEE{a}_{i,j}),\  \SEE{a}_{i,i}=0 \mbox{ for all } 1 \le i \le i \}.
	\endaligned
\end{equation}
Let $O$ be the zero matrix in $\MN(n)$ and $M_n(\ZG) $,
and we denote {$\bs{O}=(O | O) $} be the zero matrix in {$\MNZN(n) $}.

Suppose $\Ad=(\SE{A}|\SO{A})\in\MNZ(n,r)$ 
with {$\SE{A}=(\SEE{a}_{i,j})$}, {$\SO{A}=(\SOE{a}_{i,j})$}, 
let $A = \SE{A} + \SO{A} = ({a}_{i,j})$, 
$ \snorm{A}  = \sum (\SEE{a}_{i,j}  + \sum \SOE{a}_{i,j}) $
.
The parity of {$\Ad$} is defined to be {$\parity{\Ad} = \sum \SOE{a}_{i,j} \in \ZG$}.
Recall the elements
\begin{align*}
&\nu_A=(a_{1,1},\ldots,a_{n,1},\ldots,a_{1,n},\ldots,a_{n,n}), \\
&\nu_{\SO{A}}=(a_{1,1}^1,\ldots, a_{n,1}^1,\ldots,a^1_{1,n},\ldots,a^1_{n,n})
\end{align*}
and set $\lambda=\ro(A)$. Recall  the following important elements in $\HCR$ introduced in \cite[Section 5, (5.0.3), (5.0.4)]{DW2018} :
\begin{equation}\label{eq cA}
	c_{\Ad} =  c_{\nu_{A}}^{\nu_{\SO{A}} }  , \qquad
	T_{\Ad} =  x_{\lambda} T_{d_{A}} c_{\Ad}
			\sum _{{\sigma} \in \D_{{\nu}_{A}} \cap \fS_{\mu}} T_{\sigma}.
\end{equation}

Referring to \cite[Section 2]{DGLW2025}, 
for any {$w \in \HCR$} 
and {${\Ad} \in \MNZ(n, r)$},
there is a supermodule automorphism defined as 
\begin{equation}\label{def_Phi}
\begin{aligned}
 {\Phi}_{\Ad} : \bigoplus_{\mu \in \CMN(n, r)} x_{\mu}\HCR & \longrightarrow \bigoplus_{\mu \in \CMN(n, r)} x_{\mu}\HCR,  \\
		x_{\mu} w & \mapsto {(-1)}^{\parity{{\Ad}} \cdot \parity{ w}} {\phi}_{\Ad}(x_{\mu} w) 
						=  {(-1)}^{\parity{{\Ad}}  \cdot \parity{ w}}   \delta_{\mu, \co(A)}  T_{\Ad} \cdot  w.
\end{aligned}
\end{equation}
For any supermodule $M$ over superalgebra $K$, 
we  denote the endomorphism superalgebra of $M$ by  $ \End^s_{K}(M)$.
To realize the superalgebra $\Uvqn$, 
the authors in \cite{DGLW2025} constructed 
the twisted  queer ${q}$-Schur superalgebra  $\SQvnrR$ as :
\begin{align*}
\SQvnrR 
= \End^s_{\HCR}(\bigoplus_{\lambda \in \CMN(n,r)} x_{\lambda} \HCR),
\end{align*}
and $\SQvnrR$ has a basis
  {$ \{ \Phi_{\Ad} \where  \Ad \in \MNZ(n, r)\}$},
  see  \cite{DGLW2025} for more details.

\subsection{The realizations of {$\Uvqn$}}
\ 

	For any {$\Ad=(\SUP{A}) \in \MNZNS(n)$}, {$\bs{j} \in {\ZZ}^{n} $},
recall 
$	\Norm(\Ad):=\sum_{i\geq k,j<l}a_{i,j}a_{k,l}+\sum_{i,j}a^{\bar 1}_{i,j}a^{\bar0}_{i,j}$ 
and the standard element  $[\Ad]={v}^{-\Norm(\Ad)}\Phi_\Ad $ in \cite[Definition 3.4 and Proposition 3.5]{DGLW2025}.
The long elements in {$\SQvnrR$} is defined in \cite[5.0.2]{DGLW2025}:
\begin{equation}\label{def_ajr}
\begin{aligned}
	\AJRS(\Ad, \bs{j}, r) =
\left\{
\begin{aligned}
	& \sum_{\substack{\lambda \in \CMN(n, r-\snorm{A})} }
	 {v}^{\lambda\centerdot \bs{j}} [ \SE{A} + \lambda | \SO{A} ],
		\ &\mbox{if } \snorm{A} \le r; \\
	&0, &\mbox{otherwise},
\end{aligned}
\right.
\end{aligned}
\end{equation}
where $\lambda \centerdot \bs{j}=\sum_{i=1}^n\lambda_ij_i$ is the dot product.

Recall the notations in \cite[Section 8]{DGLW2025},
\begin{equation}\label{def_aj}
\begin{aligned}
    &\AJS(\Ad, \bs{j}) = \sum_{r \ge 1 } \AJRS(\Ad, \bs{j}, r) \in \prod_{r \ge 1 } \SQvnrR,
        \quad \mbox{for any } \Ad \in \MNZNS(n), \bs{j} \in {\ZZ}^{n}, \\
    &\USnv = \tspan_{\Qv} \{\AJS(\Ad, \bs{j})\  \where  \Ad \in \MNZNS(n), \bs{j} \in {\ZZ}^{n} \} \subset \prod_{r \ge 1 } \SQvnrR.
\end{aligned}
\end{equation}
It is trivial that there is a  superalgebra epimorphism
\begin{align*}
	\bs{\eta}_r : \USnv & \longrightarrow  \SQvnrR, \\
	\AJS(\Ad, \bs{j}) & \mapsto \AJR(\Ad, \bs{j}, r).
\end{align*}
\begin{thm}\label{map_realization}
(See \cite{DGLW2025})
There is a {$\Qv$}-superalgebra isomorphism {$\bs{\xi}_n: \Uvq(n) \to \USnv$}, mapping
\begin{align*}
&{\genK}_{i}^{\pm 1} \to \AJS(\bs{O}, \pm \bs{\ep}_{i}), \qquad
{\genK}_{\ol{i}} \to  \ABJS(O, E_{i,i}, \bs{0}), \\
&{\genE}_{j} 	\to \ABJS(E_{j, j+1}, O, \bs{0}) , \qquad
{\genE}_{\ol{j}} \to   \ABJS( O, E_{j,j+1}, \bs{0}),  \\
&{\genF}_{j}	\to \ABJS(E_{j+1, j}, O, \bs{0}) , \qquad
{\genF}_{\ol{j}} \to  \ABJS(O, E_{j+1,j},  \bs{0}),
\end{align*}
where {$1 \le i \le n, 1 \le j \le n-1$}.
\end{thm}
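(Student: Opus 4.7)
The plan is to verify that the prescribed assignment on generators extends to a well-defined superalgebra homomorphism, and then show it is bijective. I would organize the proof into three steps: (a) checking that the images in $\USnv$ satisfy the defining relations (QQ1)--(QQ6), (b) establishing surjectivity by producing every long element $\AJS(\Ad,\bs{j})$ in the image, and (c) obtaining injectivity through a PBW-type linear-independence argument.

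For step (a), since $\USnv \subset \prod_{r \ge 1} \SQvnrR$ is equipped with component-wise multiplication, each relation can be checked level-by-level using multiplication formulas for the standard basis elements $[\Ad]$ in $\SQvnrR$ developed in \cite{DGLW2025}. Relations (QQ1)--(QQ3) reduce to verifying how the diagonal weight elements $\AJS(\bs{O}, \pm\bs{\ep}_i)$ interact with the elementary-matrix long elements; the only nontrivial piece is the anti-commutator of $\genK_{\ol i}$'s in (QQ1), which follows by squaring $\ABJS(O, E_{i,i}, \bs{0})$ and matching against $(\genK_i^2 - \genK_i^{-2})/(v^2 - v^{-2})$. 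Relations (QQ4)--(QQ6) require direct computation of products such as $\ABJS(E_{j,j+1}, O, \bs{0}) \cdot \ABJS(E_{j+1,j}, O, \bs{0})$ together with their odd and mixed analogues, with careful bookkeeping of the parity signs introduced by the factor ${(-1)}^{\parity{\Ad} \cdot \parity{w}}$ in the definition of $\Phi_\Ad$ in \eref{def_Phi}.

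For step (b), surjectivity is simplified by Remark~\ref{induct_all}: once we have the images of the even generators and of $\genK_{\ol n}$, the images of all remaining $\genK_{\ol j}, \genE_{\ol j}, \genF_{\ol j}$ are forced, so only the generators listed in the theorem need to be accounted for. Arbitrary $\AJS(\Ad, \bs{j})$ is then assembled by iterated products of the elementary-matrix generators corresponding to $E_{j,j+1}$ and $E_{j+1,j}$, together with $\AJS(\bs{O}, \pm\bs{\ep}_i)$ factors to adjust the weight component $\bs{j} \in \ZZ^n$. For step (c), injectivity follows by comparing PBW-type bases on both sides: a basis of $\Uvq(n)$ built from ordered monomials in suitably defined root vectors (via a braid-group action) is shown to map onto the spanning set $\{\AJS(\Ad,\bs{j})\}$ of $\USnv$, whose linear independence is confirmed by projecting via $\bs{\eta}_r$ to sufficiently large $\SQvnrR$, where $\{[\Ad]\}$ is known to be a basis. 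The main obstacle is step (a), particularly the Serre-type relations (QQ5)--(QQ6) and the mixed commutators in (QQ4); their verification requires lengthy though mechanical computations via the multiplication formulas, with especially careful sign tracking through compositions of the supermodule endomorphisms $\Phi_\Ad$.
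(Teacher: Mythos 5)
You should note first that the paper does not prove this statement at all: Theorem~\ref{map_realization} is quoted verbatim from \cite{DGLW2025}, so there is no internal proof to compare your argument against. Judged on its own terms, your three-step outline (relation check, surjectivity, injectivity) is the standard BLM-style strategy for realization theorems of this kind and has the right global shape; but as written it is a plan rather than a proof, since every step carrying mathematical content is deferred to ``lengthy though mechanical computations''. The verification of (QQ1)--(QQ6) for the long elements is precisely where the work of \cite{DGLW2025} lies, and it cannot be done from the definition \eref{def_ajr} alone: one needs full multiplication formulas for the product of each generator with an arbitrary $\AJS(\Ad,\bs{j})$ (the analogues, at the level of $\USnv$, of Theorem~\ref{mulformAonPhi}), which your proposal assumes rather than establishes.

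Two of your steps are also stated too optimistically. For surjectivity, an iterated product of the elementary generators does not produce $\AJS(\Ad,\bs{j})$ on the nose; it produces $\AJS(\Ad,\bs{j})$ plus a combination of terms $\AJS(\Bd,\bs{k})$ with $\snorm{B}<\snorm{A}$, so you need a triangularity statement with respect to a partial order on $\MNZNS(n)\times\ZZ^n$ and an induction on $\snorm{A}$ before you may conclude that the span is attained. For injectivity, a PBW basis of $\Uvq(n)$ via a braid-group action is itself a nontrivial theorem in the queer setting (the odd Cartan elements satisfy ${\genK}_{\ol{i}}^2=({\genK}_i^2-{\genK}_i^{-2})/({v}^2-{v}^{-2})$, so the Clifford-type part must be handled separately from the even PBW monomials), and the linear independence of $\{\AJS(\Ad,\bs{j})\}$ inside $\prod_{r}\SQvnrR$, which you invoke, also requires an argument --- evaluation under $\bs{\eta}_r$ for $r\gg 0$ together with a Vandermonde-type separation of the $\bs{j}$-components --- and does not follow merely from $\{\Phi_{\Ad}\}$ being a basis of each $\SQvnrR$. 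None of these gaps is fatal to the strategy, but collectively they are the actual substance of the theorem, so the proposal should be regarded as a correct road map rather than a proof.
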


Denote 
\begin{align*}
&\AK(h,k) = \sum_{u=1}^{k-1} a_{h, u} ,\qquad 
\BK(h,k) = \sum_{j=k+1}^{n} a_{h, j} ,  \\
&[\![k ]\!]_{x} = 1 + {x} + {x}^2 + \cdots + {x}^{k-1},\qquad
[\![k ]\!]_{x, y}  =  [\![k ]\!]_{x} -  [\![k ]\!]_{y} .
\end{align*}
Referring to Theorem \ref{map_realization},
and applying  \cite[Section 1.4, Main Theorem]{DGLW2025},
we have 
\begin{thm}\label{mulformAonPhi}
For any  {${\Phi}_{\Ad} \in \SQvnrR$} and {$\ro(A) = \lambda$},
we have the following formulas:
\begin{align*}
{\rm (1)} \quad
{\genK}_{h} \Phi_{\Ad} 
&= 
	{v}^{\lambda_h  }	{\Phi}_{\Ad} , \\
{\rm (2)} \quad
{\genE}_{h} {\Phi}_{\Ad} 
& =  {v}^{-\lambda_h}  \sum_{k=1}^n 
	{v}^{ 2 \BK(h,k) + 2 \SOE{a}_{h+1,k}}  \STEP{ \SEE{a}_{h,k} + 1}  
		{\Phi}_{(\SE{A} - E_{h+1, k} + E_{h,k} | \SO{A} )}  \\
	&\qquad +
	  {v}^{-\lambda_h}  \sum_{k=1}^n 
	{v}^{ 2 \BK(h,k)} {\Phi}_{(\SE{A}  | \SO{A} - E_{h+1, k}  + E_{h,k} )}  \\
	&\qquad +
	  {v}^{-\lambda_h}  \sum_{k=1}^n 
	{v}^{2 \BK(h,k) - 2}  \STEPPD{a_{h,k}+1}
	{\Phi}_{(\SE{A} + 2E_{h,k} | \SO{A}  -E_{h,k} - E_{h+1,k} )},  \\
{\rm (3)} \quad
{\genF}_{h} {\Phi}_{\Ad} 
&= {v}^{- \lambda_{h+1}}\sum_{k=1}^n 
	 {v }^{ 2 \AK(h+1,k) }  \STEP{ \SEE{a}_{h+1, k} +1} 
	{\Phi}_{(\SE{A} - E_{h,k} + E_{h+1, k} | \SO{A} )} \\
	& \qquad + 
	 {v}^{- \lambda_{h+1}}\sum_{k=1}^n
	{v}^{2 \AK(h+1,k)  + 2 a_{h, k} - 2} {\Phi}_{(\SE{A}  |\SO{A} - E_{h,k} + E_{h+1,k})} \\
	& \qquad -
	 {v}^{- \lambda_{h+1}}\sum_{k=1}^n
	{v}^{2 \AK(h+1,k)  + 2 a_{h, k} - 4} 
	{\STEPPDR{  a_{h+1, k} +1}}
	{\Phi}_{(\SE{A} + 2E_{h+1,k} | \SO{A}  - E_{h,k} - E_{h+1,k})}, \\
{\rm (4)} \quad
{\genK}_{\ol{n}} \Phi_{\Ad} 
&= 
	{v}^{ - \lambda_n + 1} 
\sum_{ k=1}^n 
	{(-1)}^{{\SOE{\tilde{a}}}_{n-1,k}   + \parity{\Ad}} 
	{v}^{ 2 \BK(n,k) } {\Phi}_{(\SE{A} -E_{n,k}|\SO{A}+ E_{n,k})}  \\
	&\qquad + 
		{v}^{ - \lambda_n + 1} 
		\sum_{ k=1}^n 
	{(-1)}^{ {\SOE{\tilde{a}}}_{n-1,k}   + \parity{\Ad} + 1} 
	{v}^{ 2 \BK(n,k) }  {\STEPP{a_{n,k}}} 
	{\Phi}_{(\SE{A} +  E_{n,k}| \SO{A} - E_{n,k})}.
\end{align*}
\end{thm}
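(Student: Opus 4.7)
The plan is to derive each formula as a specialization of the general multiplication theorem in $\USnv$ proven in \cite[Section 1.4, Main Theorem]{DGLW2025}, transported to $\SQvnrR$ through the surjection $\bs{\eta}_r$ of Theorem~\ref{map_realization}. Concretely, by Theorem~\ref{map_realization} and \eqref{def_ajr}, each of the images $\bs{\eta}_r\bs{\xi}_n(\genK_h)$, $\bs{\eta}_r\bs{\xi}_n(\genE_h)$, $\bs{\eta}_r\bs{\xi}_n(\genF_h)$, $\bs{\eta}_r\bs{\xi}_n(\genK_{\ol{n}})$ is a finite $\Qv$-linear combination of standard basis elements $[\Bd]$ with $\snorm{B} \le 1$. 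So every product $\genE_h \cdot \Phi_{\Ad}$ (and similarly for the other generators) reduces to the general left multiplication formula for $[\Bd] \cdot [\Ad]$ from \cite{DGLW2025} specialised at these particular input matrices, followed by the passage $[\cdot] = v^{-\Norm(\cdot)} \Phi_{\cdot}$.

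Statement (1) serves as a warm-up: the image of $\genK_h$ is $\sum_{\mu \in \CMN(n,r)} v^{\mu_h}[\diag(\mu)|O]$, and by~\eqref{def_Phi} the product $[\diag(\mu)|O]\cdot \Phi_{\Ad}$ vanishes unless $\mu = \ro(A) = \lambda$, in which case it equals $\Phi_{\Ad}$; the sum collapses to $v^{\lambda_h}\Phi_{\Ad}$. For (2) and (3), the input matrices $(E_{h,h+1}|O)$ and $(E_{h+1,h}|O)$ have $\snorm = 1$ and lie entirely in the even part. Applying the general multiplication formula produces an expansion indexed by the column $k \in \{1, \dots, n\}$ with three types of terms matching the structure of the right-hand sides: a purely even row swap carrying the quantum integer $\STEP{\SEE{a}_{h,k}+1}$, an odd row swap with no such factor, and a paired odd-to-even creation weighted by $\STEPPD{a_{h,k}+1}$ (respectively $\STEPPDR{a_{h+1,k}+1}$ for $\genF_h$, as dictated by the super-relations $(\rm{QQ5})$--$(\rm{QQ6})$). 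Converting each output $[\Bd]$ back to $\Phi_{\Bd}$ and computing $\Norm(\text{output}) - \Norm(\text{input})$ yields the displayed exponents.

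Statement (4) is analogous but with the input matrix $(O|E_{n,n})$ concentrated in the odd part, so the odd case of the main multiplication theorem applies. This gives two families of terms indexed by $k$: one transferring an entry from $\SE{A}$ to $\SO{A}$ in row $n$, and one doing the reverse, the latter weighted by $\STEPP{a_{n,k}}$. The sign $(-1)^{\SOE{\tilde a}_{n-1,k} + \parity{\Ad}}$ emerges as the Koszul sign accumulated when the odd creation/annihilation is moved past the odd entries $\SOE{a}_{i,k}$ with $i < n$ and past the parity of $\Ad$ itself, in the underlying Clifford/Hecke computation of $\Phi_{(O|E_{n,n})}\Phi_{\Ad}$ via the definition \eqref{def_Phi} of the $\Phi_{\Ad}$.

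The main obstacle throughout is the arithmetic reconciliation of $v$-exponents. The raw $\USnv$-formulas carry exponents of the form $-\AK(\cdot,\cdot)+\BK(\cdot,\cdot)$ plus $\Norm$-corrections coming from the $[\cdot] \leftrightarrow \Phi_\cdot$ conversion, and one must check that these combine into the clean forms $2\BK(h,k) + 2\SOE{a}_{h+1,k}$, $2\AK(h+1,k) + 2a_{h,k} - 2$, and $2\BK(n,k)$ appearing in (2)--(4). This reduces to an entry-by-entry comparison of $\Norm$ for each pair of adjacent matrices, which is routine but lengthy, particularly in the mixed even/odd terms. The leading factors $v^{-\lambda_h}$, $v^{-\lambda_{h+1}}$, $v^{-\lambda_n+1}$ then arise from this bookkeeping together with the weight shift induced by the $\bs{j}$-components (trivially zero for $\genE_h$, $\genF_h$, $\genK_{\ol n}$; equal to $\bs\ep_h$ for $\genK_h$).
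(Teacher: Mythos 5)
The paper offers no written proof of this theorem: it is stated as a direct specialization of the Main Theorem of \cite{DGLW2025} via the realization map of Theorem~\ref{map_realization} and the evaluation $\bs{\eta}_r$, which is precisely the route you take, so your proposal matches the paper's approach (and merely fills in more of the bookkeeping). One small imprecision: the images $\bs{\eta}_r\bs{\xi}_n(\genX)$ are linear combinations of standard elements $[\Bd]$ with $\snorm{B}=r$ (it is the underlying matrices $\Ad$ in $\AJRS(\Ad,\bs j,r)$, before the diagonal $\lambda$ is added, that have $\snorm{A}\le 1$), but this does not affect the argument.
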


\spaceintv
\section{The  weight modules  of {$\Uvqn$}  }\label{sec_modules}

In the following of this paper, we  denote $\Uvqn$ by  $\Uv$  for simplicity.
Let {$\Uv^+$} (resp., {$\Uv^-$}) be the subalgebra of {$\Uv$}
generated by {${\genE}_{j}, {\genE}_{\ol{j}}$} (resp., {${\genF}_{j}, {\genF}_{\ol{j}}$}) for all {$j=1, \cdots , n-1$}.
Let {$\Uv^0$} be the subalgebra of {$\Uv$}
generated by {${\genK}_{i}, {\genK}_{i}^{-1}, {\genK}_{\ol{i}}$}  for all {$i=1, \cdots , n$}.
Let {$\Uv^{\ge 0}$} be the subalgebra of {$\Uv$} generated by {$\Uv^+$} and {$\Uv^0$}.

Let {$P = {\Qv}  \setminus \{0\} $}.
Denote {$ {v}^{\ZZ} = \{ {v}^{i} \where i \in \ZZ\} $},
{$ {v}^{\NN} = \{ {v}^{i} \where i \in \NN\} $}.
For any  {$\Uv$}-module $M$ and  {$\omega = (\omega_1, \cdots, \omega_n) \in P^n$}, 
put
\begin{align*}
M_{\omega} 
= \{ m \in M \where
	{\genK}_{i} m = \omega_{i}  m , 
	\mbox{ for all } 1 \le i \le n
	\}.
\end{align*}
If {$M_{\omega} \ne 0$}, {$\omega$} is called a weight of {$M$} and {$M_{\omega}$} is the weight space of weight {$\omega$}.
Denote the set of weights of $M$ by
\begin{align*}
\wt(M) = \{ \omega \in P^n \where M_{\omega} \ne 0\}.
\end{align*}
A  {$\Uv$}-module $M$  is called a weight module if 
\begin{align*}
M = 
	\bigoplus_{\omega \in \wt(M) } M_{\omega} .
\end{align*}
A nonzero vector {$m \in M_{\omega} $}  is called a highest weight vector of weight {$\omega$}  
if {${\genE}_{j} m = 0$}, {${\genE}_{\ol{j}} m = 0$}  for all {$1 \le j \le n-1$}.
A  {$\Uv$}-module $M$ is called a highest weight module of highest weight {$\omega$} if 
it is generated by a highest weight vector of weight {$\omega$}.

For any weight module $M$  and {$m \in M$}, 
it is clear $m$ has a unique decomposition {$m  = \sum_{\omega \in \wt(M)} m_{\omega}$},
where {$ m_{\omega} \in M_{\omega}$} for each {$\omega \in \wt(M)$}.
Then we denote {$\wt(m) = \{ \omega \in \wt(M) \where m_{\omega} \ne 0\}$}.
A weight module $M$ is called a polynomial weight module if {$\wt(M) \subset {({v}^{\NN})}^n$}.

For any {$\omega \in P^n$}, 
let {$I(\omega)$} be the left ideal of  {$\Uv$} 
generated by {$\{ {\genK}_{i} - {\omega_i} \cdot 1, {\genE}_{j}, {\genE}_{\ol{j}} \where  1 \le i \le n,  1 \le j \le n-1 \}$}. 
We define the Verma module corresponding to weight {$\omega$} to be  {$V(\omega) = \Uv / I(\omega)$}.
Similar to \cite[Section 10.1]{CP}, 
we have the following
\begin{prop}
Any highest weight module of highest weight {$\omega$} is isomorphic to a quotien of {$V(\omega)$}. 
\end{prop}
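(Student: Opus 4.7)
The plan is to mimic the standard Verma-module universality argument, adapted to the super/queer setting. Let $M$ be a highest weight module with highest weight vector $m_0$ of weight $\omega$, so that $M = \Uv \cdot m_0$, $\genK_i m_0 = \omega_i m_0$ for all $1 \le i \le n$, and $\genE_j m_0 = \genE_{\ol{j}} m_0 = 0$ for all $1 \le j \le n-1$. First I would define the evaluation map
\[
\phi : \Uv \longrightarrow M, \qquad u \longmapsto u \cdot m_0,
\]
which is by construction a surjective homomorphism of left $\Uv$-modules (surjectivity follows from $M = \Uv \cdot m_0$).

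Next I would verify that $I(\omega) \subseteq \ker\phi$. This is immediate on generators: for each of the elements $\genK_i - \omega_i \cdot 1$, $\genE_j$, $\genE_{\ol{j}}$ one has $(\genK_i - \omega_i)\cdot m_0 = 0$, $\genE_j\cdot m_0 = 0$ and $\genE_{\ol{j}}\cdot m_0 = 0$ by the defining properties of a highest weight vector. Since $\phi$ is a left $\Uv$-module map and $I(\omega)$ is the left ideal generated by these elements, every element of $I(\omega)$ lies in $\ker\phi$. Therefore $\phi$ factors through $V(\omega) = \Uv/I(\omega)$, producing a surjective $\Uv$-module homomorphism
\[
\ol{\phi} : V(\omega) \longrightarrow M, \qquad u + I(\omega) \longmapsto u\cdot m_0,
\]
and hence $M \cong V(\omega)/\ker\ol{\phi}$, which is the required presentation of $M$ as a quotient of $V(\omega)$.

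There is essentially no obstacle beyond bookkeeping: the only point one must be slightly careful about in the super setting is that $I(\omega)$ is defined as a \emph{left} ideal (not a two-sided one), which is the correct notion here because $V(\omega)$ is being viewed as a left $\Uv$-module and $\phi$ is a left module map; no parity sign issues enter because $m_0$ is being multiplied on the left and the annihilating relations are homogeneous. Thus the proof is a direct application of the universal property of the quotient, exactly as in \cite[Section 10.1]{CP}.
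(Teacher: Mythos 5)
Your proof is correct and follows the same route as the paper: both construct the surjective module map $V(\omega)\to M$ sending $1+I(\omega)$ to the highest weight vector and conclude $M\cong V(\omega)/\ker$. Your version simply spells out the factorization through the left ideal $I(\omega)$ in more detail than the paper's one-line argument.
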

\begin{proof}
Assume $M$ is a highest weight module of highest weight {$\omega$} generated by weight vector {$m$},
more precisely, {$M = \Uv m$}.
Then there is an epimorphism of modules {$f: V(\omega) \to M$}, 
mapping {$(1+ I(\omega))$} to {$m$},
and {$M \cong V(\omega)/{\mathrm{Ker}f}$}.
\end{proof}

For any {$\bs{k} \in \ZZ^n$} and {$\omega \in P^n$}, 
we denote
\begin{align*}
& {v}^{\bs{k}} =({v}^{k_1} , \cdots,  {v}^{k_n} ) \in P^n  , \\
& {v}^{\bs{k}} \omega= \omega	{v}^{\bs{k}} =({v}^{k_1} \omega_1, \cdots,  {v}^{k_n} \omega_n) \in P^n  , \\
& {\omega}^{-1} =( \omega_1^{-1}, \cdots,   \omega_n^{-1} ) \in P^n  .
\end{align*}

\begin{prop}\label{wt_shift}
Let {$M_{\omega}$} be a weight  space of a {$\Uv$}-module {$M$}, then 
\begin{align*}
&{\genK}_{i}^{\pm} M_{\omega} \subset M_{   \omega  }, \qquad
{\genK}_{\ol{i}} M_{\omega} \subset M_{ \omega }, \\
&{\genE}_{j} M_{\omega} \subset M_{{v}^{{\alpha}_j} \omega }, \qquad
{\genE}_{\ol{j}} M_{\omega} \subset M_{{v}^{{\alpha}_j}  \omega }, \\
&{\genF}_{j} M_{\omega} \subset M_{{v}^{-{\alpha}_j}  \omega }, \qquad
{\genF}_{\ol{j}} M_{\omega} \subset M_{{v}^{-{\alpha}_j}  \omega} ,
\end{align*}
for all {$ 1 \le i \le n$},  {$ 1 \le j \le n-1$}.
Furthurmore,
 {$M_{\omega}$} is a {$\Uv^0$}-module.
\end{prop}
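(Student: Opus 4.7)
The plan is to verify each of the six inclusions by a direct computation using only the commutation relations between the generators and the $\genK_i$'s, and then deduce the final statement as an immediate corollary. Concretely, for any $m \in M_\omega$ and any $1 \le i \le n$, I will compute $\genK_i \cdot (xm)$ for $x \in \{\genK_i^{\pm 1}, \genK_{\ol{i}}, \genE_j, \genE_{\ol{j}}, \genF_j, \genF_{\ol{j}}\}$ using the relevant instance of (QQ1), (QQ2), or (QQ3) to move $\genK_i$ past $x$, and then apply the weight condition $\genK_i m = \omega_i m$.

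The first two inclusions are immediate from (QQ1): both $\genK_i \genK_j = \genK_j \genK_i$ and $\genK_i \genK_{\ol{j}} = \genK_{\ol{j}} \genK_i$ imply that $\genK_j^{\pm 1}m$ and $\genK_{\ol{j}}m$ are still eigenvectors of every $\genK_i$ with the same eigenvalue $\omega_i$, hence lie in $M_\omega$. For the $\genE_j$ inclusion, (QQ2) gives
\[
\genK_i(\genE_j m) = v^{\bs{\ep}_i \refdot \alpha_j}\genE_j \genK_i m = v^{\bs{\ep}_i \refdot \alpha_j}\omega_i \cdot \genE_j m,
\]
so $\genE_j m \in M_{v^{\alpha_j}\omega}$ with the convention $v^{\alpha_j} = (v^{\bs{\ep}_1 \refdot \alpha_j}, \dots, v^{\bs{\ep}_n \refdot \alpha_j})$. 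The cases $\genE_{\ol{j}}$, $\genF_j$, $\genF_{\ol{j}}$ follow by the same one-line computation using the remaining three identities in (QQ2).

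There is essentially no obstacle here; the only mild subtlety is fixing the convention $v^{\alpha_j}\omega = (v^{\bs{\ep}_i\refdot\alpha_j}\omega_i)_i$ so that the shift $\omega \mapsto v^{\pm\alpha_j}\omega$ matches the exponents appearing in (QQ2) (note $\bs{\ep}_i \refdot \alpha_j = \delta_{i,j} - \delta_{i,j+1}$, so the shift really only affects the $j$-th and $(j{+}1)$-st coordinates). For the final assertion, since the three types of generators $\genK_i^{\pm 1}, \genK_{\ol{i}}$ of $\Uv^0$ all preserve $M_\omega$ by the inclusions just verified, $M_\omega$ is closed under the action of $\Uv^0$ and is therefore a $\Uv^0$-submodule of $M$.
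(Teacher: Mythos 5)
Your proposal is correct and follows essentially the same route as the paper: the first two inclusions from the commutation relations in (QQ1), the $\genE_j$, $\genF_j$ (and barred) cases by moving $\genK_i$ past the generator via (QQ2) and applying $\genK_i m = \omega_i m$, and the $\Uv^0$-module claim as an immediate consequence. Your explicit remark that $\bs{\ep}_i \refdot \alpha_j = \delta_{i,j} - \delta_{i,j+1}$ pins down the weight-shift convention slightly more carefully than the paper does, but the argument is the same.
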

\begin{proof}
The first relation is trivial.
Choose any non-zero {$m \in M_{\omega}$},
then for any {$ 1 \le i, j \le n$}, 
by the definition of weight spaces, we have 
\begin{align*}
&{\genK}_{i}   {\genK}_{\ol{j}}  m  =   {\genK}_{\ol{j}} {\genK}_{i}   m  
=   {\omega}_{i} {\genK}_{\ol{j}} m  ,
\end{align*}
hence we have {$ {\genK}_{\ol{j}} m \in M_{\omega}$}
and  {$M_{\omega}$} is a {$\Uv^0$}-module.

For any {$ 1 \le i \le n$}, {$ 1 \le  j \le n-1$}, 
referring to the generating relations of {$\Uv$}, we have 
\begin{align*}
{\genK}_{i} ({\genE}_{j} m) 
	&= {v}^{( {\ep}_i, \alpha_j)} {\genE}_{j}  {\genK}_{i} m 
	=  {v}^{( {\ep}_i, \alpha_j)}  {\omega}_{i}  {\genE}_{j}  m,\\
{\genK}_{i} ({\genF}_{j} m) 
	&= {v}^{-( {\ep}_i, \alpha_j)} {\genF}_{j}  {\genK}_{i} m 
	=  {v}^{-( {\ep}_i, \alpha_j) }  {\omega}_{i}  {\genF}_{j}  m,
\end{align*}
which means
\begin{align*}
{\genE}_{j} m \in M_{{v}^{{\alpha}_j}  \omega  }, \qquad
{\genF}_{j} m \in M_{{v}^{-{\alpha}_j}  \omega }.
\end{align*}
The proofs for {${\genE}_{\ol{j}}$} and {${\genF}_{\ol{j}}$} are similar to  {$E_{{j}}$}, {$F_{{j}}$}
and  we omit them.
\end{proof}

We define a partial order of the weights  as: 
for any two weights {$\omega, \gamma \in P^n$},
we say {$\omega \ge \gamma $} if there exists {$\bs{\beta} \in \bs{Q}^+$} such that {$ \omega  ={v}^{\bs{\beta}} { \gamma}$}.
If {$\omega \ge \gamma $} and {$\omega \ne \gamma $},
 denote {$\omega > \gamma $}.
A weight {$\omega \in \wt(M)$} is called maximal 
if there is no other {$  \gamma \in \wt(M)$} such that {$\gamma > \omega$}.

\begin{prop}\label{fd_weight}
Let {$M$} be 
a finite dimensional 
irreducible weight module of  {$\Uv$},
then there exists only one maximal  {$\omega \in \wt(M) $}
and {$M$} is a highest weight module of highest weigh  {$\omega$}.
\end{prop}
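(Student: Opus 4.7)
The plan is to first exploit finite-dimensionality to extract a highest weight vector from any maximal weight, then invoke the triangular decomposition of $\Uv$ to show that this weight dominates all others.

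Since $M = \bigoplus_{\omega \in \wt(M)} M_{\omega}$ is finite dimensional, $\wt(M)$ is a finite set, so the partial order $\ge$ admits at least one maximal element. Fix such an $\omega$ and choose any non-zero $m \in M_{\omega}$. By Proposition~\ref{wt_shift}, for every $1 \le j \le n-1$ we have $\genE_{j} M_{\omega}, \genE_{\ol{j}} M_{\omega} \subset M_{v^{\alpha_j}\omega}$. Since $v^{\alpha_j}\omega > \omega$, maximality forces $M_{v^{\alpha_j}\omega} = 0$, hence $\genE_{j} m = \genE_{\ol{j}} m = 0$. Thus $m$ is a highest weight vector of weight $\omega$, and the submodule $\Uv m \ne 0$ coincides with $M$ by irreducibility, so $M$ is a highest weight module of highest weight $\omega$.

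For uniqueness of the maximal weight, I would invoke the triangular decomposition $\Uv = \Uv^{-}\Uv^{0}\Uv^{+}$ that is part of the foundational theory of $\Uvqn$ developed in \cite{GJKK}. Any monomial in $\Uv^{+}$ of positive total degree in the generators $\genE_{j}, \genE_{\ol{j}}$ annihilates $m$, because its rightmost factor sends $m$ to $0$; therefore $\Uv^{+} m = \Qv \cdot m$. Combined with $\Uv^{0} m \subset M_{\omega}$ from Proposition~\ref{wt_shift}, this yields
\begin{align*}
M \;=\; \Uv m \;=\; \Uv^{-}\Uv^{0}\Uv^{+} m \;\subset\; \Uv^{-} M_{\omega}.
\end{align*}
Since $\Uv^{-}$ is generated by $\genF_{j}, \genF_{\ol{j}}$ of weights $-\alpha_j$, every weight of $\Uv^{-} M_{\omega}$ has the form $v^{-\bs{\beta}}\omega$ with $\bs{\beta} \in \bs{Q}^{+}$. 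Hence every weight of $M$ is $\le \omega$, so $\omega$ is the unique maximal weight in $\wt(M)$.

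The main subtlety will be justifying the triangular factorisation: in contrast with $U_q(\mathfrak{sl}_n)$, the factor $\Uv^{0}$ is non-commutative because of the Clifford-type relation (QQ1), and relations (QQ3)--(QQ5) couple even and odd generators non-trivially. However, the required PBW-type decomposition is already in place in \cite{GJKK}, so beyond this citation the argument relies only on Proposition~\ref{wt_shift} and the definitions.
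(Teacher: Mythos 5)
Your proof is correct, and the existence half (finiteness of $\wt(M)$ gives a maximal $\omega$, maximality plus Proposition~\ref{wt_shift} kills $\genE_j m$ and $\genE_{\ol{j}}m$, irreducibility gives $M=\Uv m$) coincides with the paper's. Where you diverge is the uniqueness of the maximal weight. The paper argues that a second maximal weight $\lambda\ne\omega$ would make $\Uv M_{\lambda}$ and $\Uv M_{\omega}$ ``different submodules'' of $M$, contradicting irreducibility; as written this is terse to the point of being incomplete, since by irreducibility both submodules are all of $M$, and the only way to see they would have to differ is precisely to know that the weights of $\Uv M_{\omega}$ are bounded above by $\omega$. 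Your route supplies exactly that missing fact: the triangular decomposition $\Uv=\Uv^{-}\Uv^{0}\Uv^{+}$ from \cite{GJKK}, together with $\Uv^{+}m=\Qv\, m$ and $\Uv^{0}m\subset M_{\omega}$, gives $M=\Uv^{-}M_{\omega}$, whence every weight of $M$ lies below $\omega$ and uniqueness is immediate. The cost is an appeal to the PBW-type factorisation (which, as you note, is nontrivial here because $\Uv^{0}$ is noncommutative, but is established in \cite{GJKK} and is already alluded to in the paper's introduction); the benefit is that your argument is self-contained at the uniqueness step and in fact proves the stronger statement that $\omega$ dominates every weight of $M$, not merely every maximal one.
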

\begin{proof}
As {$M$} is a finite dimensional module, the set {$\wt(M)$} is finite.
Then there is at least one maximal {$\omega$}.
It is clear that  for any {$1 \le j \le  n-1$},
we have {${\genE}_{j} M_{\omega} =0 $},
otherwise by Proposition \ref{wt_shift},
{${v}^{ {\alpha}_j} \omega \in \wt(M)$} and {${v}^{ {\alpha}_j}  \omega > \omega   $}, 
which causes a contradiction.
If there is another maximal {${\lambda}\in \wt(M) $} and {${\lambda} \ne \omega$}, 
then  {$\Uv M_{\lambda}$} and {$\Uv  M_{\omega}$} are different submodules of {$M$},
which contradicts to the assumption that $M$ is irreducible.
Hence  {$\omega$}  is the only maximal weight, which is of course a highest weight,
 and there exists {$ m_{\omega} \in  M_{\omega}$} such that {$M = \Uv m_{\omega}$}.
\end{proof}

\begin{prop}\label{fd_weight_eig}
Let {$M$} be a finite dimensional 
irreducible weight module of  {$\Uv$},
then there is no non-zero eigenvalues for {${\genE}_{j}$}, {${\genF}_{j}$}, {${\genE}_{\ol{j}}$}, {${\genF}_{\ol{j}}$},
for all 
{$1 \le j \le  n-1$}. 
\end{prop}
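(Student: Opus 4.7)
The strategy is a standard local-nilpotency argument: I would show that on any finite dimensional weight module, each of the operators $\genE_j,\genE_{\ol{j}},\genF_j,\genF_{\ol{j}}$ is nilpotent, which immediately forces its only possible eigenvalue to be $0$. Let me treat $\genE_j$ in detail; the other three cases are verbatim translations after swapping $\alpha_j$ with $-\alpha_j$ where appropriate.

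Suppose toward a contradiction that $\genE_j m = c\,m$ for some nonzero $m \in M$ and some $c \in P$ with $c \ne 0$. Since $M$ is a weight module, decompose $m = \sum_{\omega \in \wt(m)} m_{\omega}$ with $m_{\omega} \in M_{\omega}$ and $\wt(m)$ finite. By Proposition~\ref{wt_shift}, $\genE_j M_\omega \subset M_{v^{\alpha_j}\omega}$, hence by induction on $k \ge 1$,
\[
\genE_j^{k} m_{\omega} \in M_{v^{k\alpha_j}\omega}
\quad\text{for every } \omega \in \wt(m).
\]
Since $M$ is finite dimensional, $\wt(M)$ is a finite subset of $P^n$. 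On the other hand, because $\alpha_j = \bs{\ep}_j - \bs{\ep}_{j+1}$ and $v$ is an indeterminate, the elements $\{v^{k\alpha_j}\omega\}_{k\ge 0}$ are pairwise distinct for each fixed $\omega$. Therefore for $k$ sufficiently large, $v^{k\alpha_j}\omega \notin \wt(M)$ for every $\omega$ in the finite set $\wt(m)$, so $\genE_j^{k} m_{\omega} = 0$ for every such $\omega$, and hence $\genE_j^{k} m = 0$.

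But iterating the eigenvector relation also gives $\genE_j^{k} m = c^{k} m$, so $c^{k} m = 0$; as $c \ne 0$ and $m \ne 0$, this is a contradiction. Thus $\genE_j$ admits no nonzero eigenvalue. The argument for $\genE_{\ol{j}}$ is identical since it also shifts weights by $v^{\alpha_j}$; for $\genF_j$ and $\genF_{\ol{j}}$ one replaces $\alpha_j$ by $-\alpha_j$ and observes that the orbit $\{v^{-k\alpha_j}\omega\}_{k\ge 0}$ again eventually leaves the finite set $\wt(M)$. Since none of these steps require irreducibility, the statement actually holds for every finite dimensional weight module; irreducibility simply ensures the hypothesis of Proposition~\ref{fd_weight} is available and matches the setting in which the result will be used. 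The only subtle point is the verification that the orbit $\{v^{k\alpha_j}\omega\}_{k\ge 0}$ is infinite, but this follows immediately from the fact that $v^k \ne v^{k'}$ in $\Qv$ whenever $k \ne k'$, so I do not expect any genuine obstacle.
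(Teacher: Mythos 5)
Your proposal is correct and rests on exactly the two ingredients the paper uses: the weight-shift property of Proposition~\ref{wt_shift} and the finiteness of $\wt(M)$. The paper gets its contradiction from a single application of ${\genE}_{j}$ (comparing $\wt({\genE}_{j}m)=\wt(m)$ with $\wt({\genE}_{j}m)=\{v^{\alpha_j}\lambda \mid \lambda\in\wt(m)\}$), while you iterate to obtain local nilpotency; this is the same argument in substance, and your iteration merely makes explicit the fact that the orbit $\{v^{k\alpha_j}\omega\}_{k\ge 0}$ is infinite, on which the paper's one-step contradiction implicitly relies.
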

\begin{proof}
Assume there exist   $ 1 \le j \le n-1$,   {$m \in M$} and nonzero {$ {\cft} \in \Qv$} 
satisfying  {${\genE}_{j} m =  {\cft}  m$}.
By  {$m=\sum_{\lambda \in \wt(m)}  m_{\lambda} $},
we have {${\genE}_{j} m  =\sum_{\lambda \in \wt(m)}{\genE}_{j}   m_{\lambda} $}.
Hence  Proposition \ref{wt_shift} implies
\begin{equation}\label{wt_shift_ej}
\wt({\genE}_{j} m)  = \{  {v}^{ {\alpha}_j}  \lambda  \where \lambda \in \wt(m)\}.
\end{equation}
On the other hand, 
because 
{${\genE}_{j} m  =\sum_{\lambda \in \wt(m)} {\cft}  m_{\lambda} $},
we have {$\wt({\genE}_{j} m)  =  \wt(m)$},
which contradicts to \eqref{wt_shift_ej}.
Hence there is no nonero eigenvector for {${\genE}_{j}$}.
The proof for {${\genF}_{j}$}, {${\genE}_{\ol{j}}$}, {${\genF}_{\ol{j}}$} is analogous and we omit it.
\end{proof}

\begin{prop}\label{fd_eigen}
Let {$M$} be a polynomial weight module of  {$\Uv$} and $1 \le i \le n$.
If {${\genK}_{\ol{i}}$} has an eigenvector  {$m$} corresponding to eigenvalue {$ {\cft} \in \Qv$},
then 
{$ {\cft}  =\pm 1$} and {$m \in M_{\lambda}$} with {$\lambda_i =\pm {v}$}.
\end{prop}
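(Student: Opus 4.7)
The plan is to exploit the $i=j$ case of relation (QQ1), which simplifies to the quadratic identity
$$\genK_{\ol{i}}^{\,2} = \frac{\genK_i^{\,2} - \genK_i^{-2}}{v^2 - v^{-2}},$$
and then to combine this identity with the polynomial weight hypothesis to pin down the eigenvalue.

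First I would decompose $m = \sum_{\lambda \in \wt(m)} m_\lambda$ using the weight-space decomposition of $M$. Since $\genK_{\ol{i}}$ commutes with every $\genK_j$ by (QQ1), the same reasoning as in Proposition~\ref{wt_shift} shows that $\genK_{\ol{i}}$ preserves each weight space $M_\lambda$, so $\genK_{\ol{i}} m_\lambda = \cft\, m_\lambda$ for every $\lambda$ with $m_\lambda \ne 0$. Squaring and applying the displayed identity on $M_\lambda$ then gives
$$\cft^{\,2} = \frac{\lambda_i^{\,2} - \lambda_i^{-2}}{v^2 - v^{-2}}.$$
The polynomial-weight assumption provides $\lambda_i = v^{a_i}$ with $a_i \in \NN$, so the right-hand side becomes $[\![a_i]\!]_{v^2}$, and I obtain the key equation $\cft^{\,2} = [\![a_i]\!]_{v^2}$.

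The heart of the argument is the claim that $[\![a]\!]_{v^2}$ is not a square in $\Qv$ for any $a \ge 2$. Writing $[\![a]\!]_{v^2} = (v^{2a}-1)/(v^2-1)$ and using the cyclotomic factorization of $v^{2a}-1$ and $v^2-1$, the quotient equals $\prod_{d \mid 2a,\, d > 2} \Phi_d(v)$, a product of pairwise distinct $\QQ$-irreducible polynomials. A polynomial is a square in $\Qv$ iff it is a square in $\QQ[v]$ (by clearing denominators in a putative rational square root), so squareness would force each $\Phi_d$ in the product to appear with even multiplicity; this fails whenever the product is nonempty, that is, whenever $a \ge 2$. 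Consequently $a_i \in \{0, 1\}$: $a_i = 0$ would force $\cft = 0$, and the only way to obtain a nonzero eigenvalue is $a_i = 1$, yielding $\lambda_i = v$ and $\cft = \pm 1$.

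Since $\cft^{\,2}$ uniquely determines $a_i$, every weight $\lambda$ occurring in the decomposition of $m$ has the same $i$-th component $\lambda_i = v$, which I take to be the intended content of ``$m \in M_\lambda$ with $\lambda_i = \pm v$''. The main obstacle is the non-squareness statement for $[\![a]\!]_{v^2}$ when $a \ge 2$, which rests on the cyclotomic factorization and the reduction of squareness in $\Qv$ to squareness in $\QQ[v]$; once this is in hand, everything else is a routine consequence of (QQ1) and Proposition~\ref{wt_shift}.
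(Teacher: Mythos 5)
Your proposal follows the same route as the paper: decompose $m$ into weight components, use the $i=j$ case of (QQ1) to get $\cft^2=\dfrac{\lambda_i^2-\lambda_i^{-2}}{v^2-v^{-2}}$ on each component, and then rule out $\lambda_i=v^{a}$ with $a\ge 2$ by showing the resulting quantum integer is not a square in $\Qv$. One arithmetic slip: with $\lambda_i=v^{a}$ the right-hand side is the \emph{balanced} quantum integer $\dfrac{v^{2a}-v^{-2a}}{v^{2}-v^{-2}}=v^{-2(a-1)}\,[\![a]\!]_{v^{4}}$, not $[\![a]\!]_{v^{2}}$; these differ by more than a unit in general. This is harmless, because $v^{-2(a-1)}$ is itself a square and your cyclotomic argument applies verbatim to $[\![a]\!]_{v^{4}}=\prod_{d\mid 4a,\ d\nmid 4}\Phi_d(v)$, which is squarefree and contains the factor $\Phi_{4a}$ whenever $a\ge 2$, so the conclusion $a\in\{0,1\}$ stands. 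It is worth noting that your non-squareness step is actually \emph{more} rigorous than the paper's: the paper simply asserts that $\sqrt{v^{2(k_i-1)}+\cdots+v^{-2(k_i-1)}}\in\Qv$ forces $2(k_i-1)=-2(k_i-1)$, a degree-matching claim that does not by itself rule out a square root of the same top and bottom degree; your reduction to squarefreeness of a product of distinct cyclotomic polynomials (together with the Gauss-lemma reduction from $\Qv$ to $\QQ[v]$) closes that gap. Finally, like the paper, you only conclude that every $\lambda\in\wt(m)$ has $\lambda_i=v$ rather than that $m$ lies in a single weight space, and you implicitly assume $\cft\neq 0$ to exclude $a_i=0$; both points are shared with (and no worse than) the paper's own proof.
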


\begin{proof}
Assume {${\genK}_{\ol{i}}$} has an eigenvector {$m$} corresponding to eigenvalue {$t \in \Qv$},
{${\genK}_{\ol{i}} m ={\cft} m$}.
Denote  {$m=\sum_{\lambda \in \wt(m)} m_{\lambda} $}.
For any {$\lambda = (v^{k_1}, \cdots, v^{k_n})\in \wt(m)$}, 
Lemma \ref{wt_shift}  implies {${\genK}_{\ol{i}}  m_{\lambda} \in M_{\lambda}$}, 
hence we have  {${\genK}_{\ol{i}}  m_{\lambda} ={\cft}  m_{\lambda}$} and  {${\genK}_{\ol{i}}^2  m_{\lambda} ={\cft}^2  m_{\lambda}$}. 
On the other hand,  by the last relation of (QQ1), 
we have 
\begin{align*}
{\genK}_{\ol{i}}^2  m_{\lambda} 
&=  \frac{{\genK}_{i}^2 - {\genK}_{i}^{-2}}{{v}^2 - {v}^{-2}} m_{\lambda} \\
&=  \frac{ {v}^{2 k_i} -   {v}^{- 2 k_i} } 
				{{v}^2 - {v}^{-2}} m_{\lambda} \\
&=   ({v}^{2(k_i-1)} + {v}^{2(k_i-3)} + \cdots + {v}^{-2(k_i-1)} ) m_{\lambda},
\end{align*}
which means
\begin{align*}
 {\cft} ^2 = {v}^{2(k_i-1)} + {v}^{2(k_i-3)} + \cdots + {v}^{-2(k_i-1)}  
\end{align*}
and 
\begin{align*}
 {\cft}   =  \pm  \sqrt{  {v}^{2(k_i-1)} + {v}^{2(k_i-3)} + \cdots + {v}^{-2(k_i-1)}   }.
\end{align*}
By {$ {\cft} \in \Qv$}, we have   {$  \sqrt{  {v}^{2(k_i-1)} + {v}^{2(k_i-3)} + \cdots + {v}^{-2(k_i-1)}   } \in \Qv$}, 
{$2(k_i-1) = - 2(k_i-1)$}.
Consequencely, {$k_i =1$},  and {$ {\cft}  = \pm 1$}.
By 
\begin{align*}
   \frac{{\lambda}_{i}^2 -  {\lambda}_{i}^{-2}} {{v}^2 - {v}^{-2}}  
   = {\cft}^2  
   = 1
\end{align*}
and {$\lambda_i \in \Qv$},
we conclude {$\lambda_i = \pm {v}$}.
\end{proof}

For any {$t \in \NN  $}, recall the notations 
\begin{align*}
&\SSTEP{t} = \frac{ {v}^{t} - {v}^{-t} }{ {v} - {v}^{-1} } =  {v}^{t-1}+ {v}^{t-3}+ \cdots + {v}^{-t+3}+ {v}^{-t+1} , \\
&\SSTEP{t} ! = \SSTEP{t} \SSTEP{t-1} \cdots \SSTEP{1} ,
\end{align*}
and denote {$\SSTEP{0} = 1$}, {$\SSTEP{0} ! = 1$}.
Recall the divided power  for any X as 
\begin{align*}
X_{j}^{(t)} = \frac{1}{\SSTEP{t} !} X_{j}^{t}.
\end{align*}

\begin{prop}\label{mp_shift}
Let {$m$} be a highest weight vector of weight {$\omega$},
{$p \in \NN$}.
Set {$m_{j,p} = {\genF}_{j}^{(p)} m$},
{$m_{j, 0} = m$} for {$1 \le j \le n$}, 
and denote {$ (\bs{\ep}_i, \alpha_{0}) = 0$} for $1 \le i \le n$.
Then 
\begin{align*}
&{\genK}_{i}  m_{j,p} = {\omega}_i  {v}^{-p (\bs{\ep}_i, \alpha_{j}) }   m_{j, p}  , \quad
{\genK}_{i}^{-1}  m_{j,p} =  {\omega}_i^{-1} {v}^{  p \cdot  (\bs{\ep}_i, \alpha_{j})}  m_{j, p}, \\
&{\genE}_{j} m_{j,p} 
=
		   \frac{   
		  	 \omega_{j} \omega_{j+1}^{-1}   {v}^{1-p} 
		  	-   \omega_{j}^{-1} \omega_{j+1}  {v}^{p-1} 
		  	  }  {  {v} - {v}^{-1} }
		  	  m_{j,p-1}  , \qquad
{\genF}_{j} m_{j,p} =  \SSTEP{p+1} m_{j, p+1}.\\
\end{align*}
\end{prop}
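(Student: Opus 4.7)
The plan is to treat each of the four identities separately by induction on $p$, exploiting only the ordinary (non-super) quantum-group relations (QQ2) and (QQ4); the odd generators $\genE_{\ol j}, \genF_{\ol j}, \genK_{\ol i}$ never appear in the statement, so the signs from the supercommutator~\eqref{def_superbracked} play no role and the computation is essentially one inside a copy of $U_v(\mathfrak{sl}_2)\subset\Uv$.

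For the $\genK_i^{\pm 1}$ eigenvalue formulas I would iterate the commutation rule $\genK_i \genF_j = v^{-(\bs\ep_i, \alpha_j)} \genF_j \genK_i$ from (QQ2), pushing $\genK_i$ through all $p$ copies of $\genF_j$ in $\genF_j^{(p)} m$; this produces an overall factor $v^{-p(\bs\ep_i, \alpha_j)}$, after which $\genK_i m = \omega_i m$ completes the identity. The $\genK_i^{-1}$ formula then follows either by the same commutation or by inverting the scalar eigenvalue. The $\genF_j$ identity is a one-line consequence of the divided-power normalisation $\genF_j^{(p)} = \genF_j^p/\SSTEP{p}!$: indeed $\genF_j m_{j,p} = \genF_j^{p+1}m/\SSTEP{p}! = (\SSTEP{p+1}!/\SSTEP{p}!) m_{j,p+1} = \SSTEP{p+1}\, m_{j,p+1}$.

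The core step is the $\genE_j$ identity. My plan is to first establish, by an independent induction on $p$, the standard $\mathfrak{sl}_2$-type commutator formula
\[
[\genE_j, \genF_j^{(p)}] \;=\; \genF_j^{(p-1)} \cdot \frac{v^{-(p-1)} \genK_j \genK_{j+1}^{-1} - v^{p-1} \genK_j^{-1} \genK_{j+1}}{v - v^{-1}},
\]
derived from the base case $[\genE_j,\genF_j] = (\genK_j\genK_{j+1}^{-1} - \genK_j^{-1}\genK_{j+1})/(v-v^{-1})$ in (QQ4), using that commuting $\genK_j^{\pm 1}\genK_{j+1}^{\mp 1}$ past $\genF_j$ contributes a factor $v^{\mp 2}$. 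Applying this identity to $m$, killing the $\genF_j^{(p)}\genE_j m$ term via $\genE_j m = 0$, and evaluating $\genK_j\genK_{j+1}^{-1}$ and $\genK_j^{-1}\genK_{j+1}$ on $m$ as the scalars $\omega_j\omega_{j+1}^{-1}$ and $\omega_j^{-1}\omega_{j+1}$ respectively, gives $\genE_j m_{j,p}$ with exactly the claimed coefficient.

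The only real obstacle is the bookkeeping of exponents of $v$ in the inductive step establishing the commutator identity above; it is very easy to make an off-by-one error in $v^{\pm(p-1)}$ versus $v^{\pm p}$. Cross-checking against the small cases $p=1$ (which reduces directly to (QQ4)) and $p=2$ should pin the constants down unambiguously, and every remaining part of the argument is purely formal.
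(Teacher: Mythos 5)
Your proposal is correct and follows essentially the same route as the paper: both arguments push $\genK_i^{\pm1}$ through $\genF_j^p$ via (QQ2), read off the $\genF_j$ identity from the divided-power normalisation, and obtain the $\genE_j$ identity by induction on $p$ from the even relation in (QQ4), with the odd generators playing no role. The only difference is presentational --- you package the inductive step as the standard operator identity for $[\genE_j,\genF_j^{(p)}]$ and then evaluate on $m$, whereas the paper performs the same induction directly on the vector $m_{j,p}$ and simplifies the resulting geometric sums at the end; the exponent bookkeeping you flag checks out ($v^{\mp(p-1)}$ is correct, consistent with the $p=1$ case of (QQ4)).
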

\begin{proof}
The result could be proved by direct calculations. 
First of all, 
referring to 
{$  {\genK}_{i}   {\genF}_{j}^{p} 
	 =  {v}^{  -p(\bs{\ep}_i, \alpha_{j})  }  {\genF}_{j}^{p} {\genK}_{i}  $} and  
{$  {\genK}_{i}^{-1}   {\genF}_{j}^{p} 
	 =  {v}^{  p(\bs{\ep}_i, \alpha_{j})  }  {\genF}_{j}^{p} {\genK}_{i}^{-1}  $}, 
we have 
\begin{align*}
{\genK}_{i} m_{j,0} 
	 &
	 =  {\omega}_i   m_{j, 0} , \\
{\genK}_{i} m_{j,p} 
	 &= \frac{1}{\SSTEP{p} !}  {\genK}_{i}   {\genF}_{j}^{p} m  
	 = \frac{1}{\SSTEP{p} !}  {v}^{  -p(\bs{\ep}_i, \alpha_{j})  }  {\genF}_{j}^{p} {\genK}_{i}   m  
	 =  {\omega}_i  {v}^{-p (\bs{\ep}_i, \alpha_{j}) }   m_{j, p} , \\
{\genK}_{i}^{-1} m_{j,p} 
	 &= \frac{1}{\SSTEP{p} !}  {\genK}_{i}^{-1}    {\genF}_{j}^{p} m  
	 = \frac{1}{\SSTEP{p} !}  {v}^{ p(\bs{\ep}_i, \alpha_{j})  }  {\genF}_{j}^{p} {\genK}_{i}^{-1}   m  
	 =  {\omega}_{i}^{-1}   {v}^{ p (\bs{\ep}_i, \alpha_{j}) }   m_{j, p} , \\
{\genF}_{j} m_{j,p} 
	&= {\genF}_{j} {\genF}_{j}^{(p)} m  
	 = \frac{1}{\SSTEP{p} !}  {\genF}_{j} \cdot {\genF}_{j}^{p} m 
	=  \SSTEP{p+1}  \cdot  \frac{1}{\SSTEP{p+1} !} {\genF}_{j}^{p+1} m 
	=  \SSTEP{p+1} m_{j, p+1}.
\end{align*}
For the second equation,
 considering  
\begin{align*}
& {\genE}_{j} m = 0,  \qquad
 {\genK}_{i}^{-1} m_{j,p} 
=   {\omega}_i^{-1} {v}^{  p \cdot  (\bs{\ep}_i, \alpha_{j})}  m_{j, p}, \\
&\SSTEP{t} 
=  {v}^{t-1}( 1 + {v}^{-2}+ \cdots + {v}^{-2t+2}  )
= {v}^{-t+1}( 1 + {v}^{2}+ \cdots + {v}^{2t-2}  ) 
\end{align*} 
and the first equation in \rm{(QQ4)},
we have
\begin{align*}
{\genE}_{j} m_{j,1} 
	&= {\genE}_{j} {\genF}_{j} m 
	= \frac{ {\genK}_{j} {\genK}_{j+1}^{-1} - {\genK}_{j}^{-1}{\genK}_{j+1}  }{{v} - {v}^{-1}} m + {\genF}_{j} {\genE}_{j} m \\
	&= \frac{ {\omega}_{j}   {\omega}_{j+1}^{-1}  - { \omega}_{j}^{-1}  {\omega}_{j+1} }{{v} - {v}^{-1}} m  ,\\
{\genE}_{j} m_{j,2} 
	&= \frac{1}{\SSTEP{2}}  {\genE}_{j}   {\genF}_{j} m_{j, 1} \\
	&= \frac{1}{\SSTEP{2}}  \frac{ {\genK}_{j} {\genK}_{j+1}^{-1} - {\genK}_{j}^{-1}{\genK}_{j+1} }{{v} -  {v}^{-1}} m_{j,1} 
		+ \frac{1}{\SSTEP{2}}  {\genF}_{j} {\genE}_{j} m_{j,1} \\
	&=
		\frac{1}{\SSTEP{2} ({v} -  {v}^{-1})}  ({\genK}_{j} {\genK}_{j+1}^{-1} - {\genK}_{j}^{-1}{\genK}_{j+1} ) m_{j,1} 
		+ \frac{1}{\SSTEP{2}}  {\genF}_{j} {\genE}_{j} m_{j,1} \\
	&=
		\frac{1}{\SSTEP{2} ({v} -  {v}^{-1})}  (  {\omega}_{j}  {v}^{- (\bs{\ep}_j, \alpha_{j}) }  {\omega}_{j+1}^{-1}  {v}^{ (\bs{\ep}_{j+1}, \alpha_{j}) }  
		-  {\omega}_{j}^{-1}  {v}^{ (\bs{\ep}_{j}, \alpha_{j}) }  {\omega}_{j+1}  {v}^{- (\bs{\ep}_{j+1}, \alpha_{j}) }   ) m_{j,1} \\
	& \qquad 
		+ \frac{1}{\SSTEP{2}}  {\genF}_{j} \frac{ {\omega}_{j}   {\omega}_{j+1}^{-1}  - { \omega}_{j}^{-1}  {\omega}_{j+1} }{{v} - {v}^{-1}} m \\
	&= 
		\frac{1}{\SSTEP{2} ({v} -  {v}^{-1})}  (  {\omega}_{j}  {v}^{-1  }  {\omega}_{j+1}^{-1}  {v}^{-1  }  
		-  {\omega}_{j}^{-1}  {v} {\omega}_{j+1}  {v}  ) m_{j,1} \\
	& \qquad 
		+ \frac{1}{\SSTEP{2} ({v} - {v}^{-1})}  ({\omega}_{j}   {\omega}_{j+1}^{-1}  - { \omega}_{j}^{-1}  {\omega}_{j+1} )  m_{j,1}  \\
	\\
&=   
		  \frac{   
		  	\omega_{j}  {\omega}_{j+1}^{-1}  {v}^{- 2}  -  \omega_{j}^{-1}    {\omega}_{j+1} {v}^{ 2}  + {\omega_{j} } {\omega}_{j+1}^{-1}  - { \omega_{j}^{-1}  {\omega}_{j+1}  }
		  }  { \SSTEP{2} ({v} - {v}^{-1})}
		  m_{j,1}   \\
&=   
		  \frac{   
		  	\omega_{j} {\omega}_{j+1}^{-1}   {v}^{- 1} -  \omega_{j}^{-1} {\omega}_{j+1}  {v}
		  }  { ({v} - {v}^{-1})}
		  m_{j,1} \\
\end{align*}
Consequently,
\begin{align*}
{\genE}_{j} m_{j,p} 
&= {\genE}_{j} {\genF}_{j}^{(p)} m \\
&= {\genE}_{j} \cdot  \frac{1}{\SSTEP{p}}  {\genF}_{j} m_{j,p-1} \\
&= \frac{1}{\SSTEP{p}}  {\genE}_{j}   {\genF}_{j} m_{j,p-1} \\
&= \frac{1}{\SSTEP{p}} 
		 \frac{ {\genK}_{j} {\genK}_{j+1}^{-1} - {\genK}_{j}^{-1}{\genK}_{j+1} }{{v} -  {v}^{-1}}
		  m_{j,p-1} 
		+ \frac{1}{\SSTEP{p}}  {\genF}_{j} {\genE}_{j} m_{j,p-1} \\
&= \frac{1}{\SSTEP{p}}  \frac{  \omega_{j}  {\omega}_{j+1}^{-1} {v}^{   - 2 p+2}  
			-  \omega_{j}^{-1}  {\omega}_{j+1} {v}^{  2p- 2}  }
		{{v} -  {v}^{-1}} m_{j,p-1}  \\
		& \qquad 
		+ \frac{1}{\SSTEP{p}}  {\genF}_{j} \cdot
		   \frac{   
		  	\omega_{j}  {\omega}_{j+1}^{-1}  (1+ {v}^{- 2} + \cdots + {v}^{-2(p-2)})  -  \omega_{j}^{-1}  {\omega}_{j+1} (1+  {v}^{ 2} + \cdots + {v}^{2(p-2)}) 
		  }  {  \SSTEP{p-1} ({v} - {v}^{-1})}
		  m_{j,p-2}   
		\\
&= \frac{1}{\SSTEP{p}}  \frac{  \omega_{j}  {\omega}_{j+1}^{-1} {v}^{   - 2 p+2}  
			-  \omega_{j}^{-1}  {\omega}_{j+1}  {v}^{  2p- 2}  }
		{{v} -  {v}^{-1}} m_{j,p-1} \\
		& \qquad 
		+ \frac{1}{\SSTEP{p}}   \cdot
		   \frac{   
		  	\omega_{j}  {\omega}_{j+1}^{-1} (1+ {v}^{- 2} + \cdots + {v}^{-2(p-2)})  -  \omega_{j}^{-1} {\omega}_{j+1}  (1+  {v}^{ 2} + \cdots + {v}^{2(p-2)}) 
		  }  {    ({v} - {v}^{-1})}
		  m_{j,p-1}  \\
&=
		   \frac{   
		  	\omega_{j}  {\omega}_{j+1}^{-1} (1+ {v}^{- 2} + \cdots + {v}^{-2(p-1)})  -  \omega_{j}^{-1}  {\omega}_{j+1}  (1+  {v}^{ 2} + \cdots + {v}^{2(p-1)}) 
		  }  {  \SSTEP{p}  ({v} - {v}^{-1})}
		  m_{j,p-1}  .
\end{align*}
Direct calculation shows
\begin{align*}
&
		   \frac{   
		  	 \omega_{j} \omega_{j+1}^{-1}  (1+ {v}^{- 2} + \cdots + {v}^{-2(p-1)})  -   \omega_{j}^{-1} \omega_{j+1} (1+  {v}^{ 2} + \cdots + {v}^{2(p-1)}) 
		  }  {  \SSTEP{p}  ({v} - {v}^{-1})}
\\
&=
		   \frac{   
		  	 \omega_{j} \omega_{j+1}^{-1}  \cdot  \frac{ 1 - {v}^{-2p} }{ {v}^{-2} (  {v}^{2} - 1 ) }   
		  	-   \omega_{j}^{-1} \omega_{j+1} \cdot  \frac{  {v}^{2p} - 1}{ {v}^{2} - 1} 
		  	  }  {  \frac{ {v}^{p} - {v}^{-p} }{ {v} - {v}^{-1} }  ({v} - {v}^{-1})}
 \\
&=
		   \frac{   
		  	 \omega_{j} \omega_{j+1}^{-1}  {v}^{2} ( 1 - {v}^{-2p} )
		  	-  \omega_{j}^{-1}(  {v}^{2p} - 1)
		  	  }  { ({v}^{2} - 1) ({v}^{p} - {v}^{-p}  )}
 \\ 
&=
		   \frac{   
		  	 \omega_{j} \omega_{j+1}^{-1}  {v}^{2}  {v}^{-p} ( {v}^{2p}  -1)
		  	-   \omega_{j}^{-1} \omega_{j+1}  {v}^{p}  (  {v}^{2p} - 1)
		  	  }  { ({v}^{2} - 1) ({v}^{2p} - 1)}
 \\ 
&=
		   \frac{   
		  	 \omega_{j} \omega_{j+1}^{-1}   {v}^{1-p} 
		  	-   \omega_{j}^{-1} \omega_{j+1}  {v}^{p-1} 
		  	  }  {  {v} - {v}^{-1} },
\end{align*}
then we have
\begin{align*}
{\genE}_{j} m_{j,p} 
&=
		   \frac{   
		  	 \omega_{j} \omega_{j+1}^{-1}  (1+ {v}^{- 2} + \cdots + {v}^{-2(p-1)})  -   \omega_{j}^{-1} \omega_{j+1} (1+  {v}^{ 2} + \cdots + {v}^{2(p-1)}) 
		  }  {  \SSTEP{p}  ({v} - {v}^{-1})}
		  m_{j,p-1}  \\
&=
		   \frac{   
		  	 \omega_{j} \omega_{j+1}^{-1}   {v}^{1-p} 
		  	-   \omega_{j}^{-1} \omega_{j+1}  {v}^{p-1} 
		  	  }  {  {v} - {v}^{-1} }
		  	  m_{j,p-1}  .
\end{align*}
\end{proof}

Recall the quantized universal enveloping algebra  {$U_v(\f{sl}_{n})$} from \cite[Section 0.4]{Lus3}.
\begin{defn} 
The algebra  {$U_v(\f{sl}_{n})$} is the associative algebra over  {$\Qv$} 
with generators {$E_{i}$},  {$F_{i}$},  {$K_{i}$},  {$1 \le i \le n-1$}, 
and generating relations 
\begin{align*}
&K_i K_j=K_jK_i,\qquad K_iK_i^{-1}=K_i^{-1} K_i=1;\\
&K_i E_j=v^{a_{i,j}}E_jK_i, \qquad K_i F_j=v^{-a_{i,j}}F_j K_i;\\
&E_i F_i-F_iE_i=\delta_{i,j}\frac{K_i-K_i^{-1}}{v-v^{-1}};\\
&E_i E_j=E_jE_i,\qquad F_i F_j=F_j F_i, \quad \mbox{if} \ |i-j| > 1;\\
&E_i^{2} E_j-(v + v^{-1})E_i E_j E_i+E_j E_i^{2}=0,\\
&F_i^{2} F_j-(v + v^{-1})F_i F_j F_i+F_j F_i^{2}=0, \quad \mbox{if}\  |i-j| = 1.
\end{align*}
\end{defn}
 For any {$1 \le j \le n-1$}, denote {$\wave{\genK}_{j} = {\genK}_{j} {\genK}_{j+1}^{-1}$}.
It is clear that there is an algebra homomorphism 
\begin{align*}
\pi_n: \quad 
&U_v(\f{sl}_{n}) \to \Uv, \\
&K_j \mapsto \wave{\genK}_j, \qquad
E_{j} \mapsto {\genE}_{j}, \qquad
F_{j} \mapsto {\genF}_{j}, \qquad
\mbox{ for } j=1, \cdots , n-1.
\end{align*}
The homomorphism  {$\pi_n$} induces a {$U_v(\f{sl}_{n}) $}-module structure on each $\Uv$-module.

By applying Proposition \ref{mp_shift} and with a similar discussion with  \cite[Theorem 10.1.7]{CP},
we have the following.
\begin{prop}
Every  finite dimensional irreducible {${\Uv}$}-module is a highest  weight module of highest weight {$\gamma \in ( \pm {v}^{\NN})^{n}$}. 
\end{prop}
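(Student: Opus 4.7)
The strategy mirrors the classical proof for $U_v(\mathfrak{sl}_n)$ as in \cite[Theorem 10.1.7]{CP}, adapted to the queer setting by combining the homomorphism $\pi_n$ with the queer-specific results established above. First, pull $M$ back to a finite dimensional $U_v(\mathfrak{sl}_n)$-module via $\pi_n$. Classical theory then forces $\wave{\genK}_j = \genK_j \genK_{j+1}^{-1}$ to act semisimply with eigenvalues in $\pm v^\ZZ$; combined with the commutativity of $\genK_1,\ldots,\genK_n$ and the finite dimensionality of $M$, joint diagonalization of this commuting family lifts the $\wave{\genK}_j$-decomposition to a genuine weight space decomposition $M = \bigoplus_\omega M_\omega$. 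Proposition \ref{fd_weight} then provides a unique maximal weight $\omega$ and a highest weight vector $m \in M_\omega$ with $M = \Uv m$ and $\genE_j m = \genE_{\ol{j}} m = 0$ for all $j$.

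Next, for each $j \in \{1, \ldots, n-1\}$, consider the chain $m_{j,p} = \genF_j^{(p)} m$. Because $v$ is generic, the weights $v^{-p\alpha_j}\omega$ are pairwise distinct, so finite dimensionality forces the chain to terminate at some largest $N_j$ with $m_{j,N_j} \ne 0$ and $m_{j,N_j+1} = 0$. Substituting into the formula
\[
\genE_j m_{j,N_j+1} = \frac{\omega_j \omega_{j+1}^{-1} v^{-N_j} - \omega_j^{-1}\omega_{j+1} v^{N_j}}{v - v^{-1}}\, m_{j,N_j}
\]
from Proposition \ref{mp_shift} then yields $\omega_j \omega_{j+1}^{-1} = \epsilon_j v^{N_j}$ for some $\epsilon_j \in \{\pm 1\}$ and $N_j \in \NN$, establishing dominant-type conditions on the ratios.

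The remaining and hardest stage is promoting these ratio conditions to individual conditions $\omega_i \in \pm v^\NN$, since the chain argument only controls ratios. The plan is to exploit the odd Cartan acting on $M_\omega$: by Proposition \ref{wt_shift} and maximality of $\omega$, $M_\omega$ is a $\Uv^0$-stable subspace annihilated by every $\genE_j$ and $\genE_{\ol{j}}$, and on $M_\omega$ the operators $\genK_{\ol{i}}$ satisfy Clifford-type relations with $\genK_{\ol{i}}^{\,2} = \frac{\omega_i^2 - \omega_i^{-2}}{v^2 - v^{-2}}\,\id$. Extract a $\Qv$-rational eigenvector of an appropriate $\genK_{\ol{i}}$ on $M_\omega$ (using the simultaneous action of the commuting squares $\genK_{\ol{i}}^{\,2}$ together with finite dimensionality of the weight space) and apply Proposition \ref{fd_eigen}, after first noting that the dominant ratios together with maximality of $\omega$ make $M$ polynomial up to an overall sign and integer shift. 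Combining the individual constraints so obtained with the ratio conditions delivers $\omega \in (\pm v^\NN)^n$.

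The principal difficulty lies in this last step: extracting a $\Qv$-rational $\genK_{\ol{i}}$-eigenvector on $M_\omega$ (where the Clifford-type structure over $\Qv$ could obstruct a direct diagonalization) and reconciling the polynomial-weight-module hypothesis of Proposition \ref{fd_eigen} with the a priori form of $\omega$. A clean implementation inducts on $n$, using Remark \ref{induct_all} to reduce the odd-generator action to $\genK_{\ol{n}}$ together with the even generators, so that a single Clifford eigenvalue constraint suffices; the base case $n=2$ then reduces essentially to $U_v(\mathfrak{q}_2)$ and is handled by direct computation with Propositions \ref{mp_shift} and \ref{fd_eigen}.
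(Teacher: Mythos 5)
Your first two stages track the paper's argument: the paper likewise reduces to a weight-space decomposition (though it does so by extending scalars to $\wQv$ rather than by pulling back through $\pi_n$), invokes Proposition \ref{fd_weight} to get a unique maximal weight, and runs the string $m_{j,p}=\genF_j^{(p)}m$ through Proposition \ref{mp_shift} until finite dimensionality forces it to terminate. The divergence --- and the genuine gap --- lies in what the termination yields. You correctly compute that the vanishing of $\genE_j m_{j,N_j+1}$ only forces $\omega_j\omega_{j+1}^{-1}=\pm v^{N_j}$, i.e.\ it constrains \emph{ratios}; the paper at this point writes the coefficient as $\frac{\gamma_j v^{-k}-\gamma_j^{-1}v^{k}}{v-v^{-1}}$ and concludes $\gamma_j=\pm v^{k}$ outright. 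So you have isolated a real issue, but your proposed repair does not close it and, as designed, would fail.

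Concretely: (a) Proposition \ref{fd_eigen} presupposes that $M$ is a polynomial weight module, which is (up to signs) exactly what is being proved, so invoking it here is circular, as you half-acknowledge without resolving. (b) Even granting its hypotheses, Proposition \ref{fd_eigen} says that any $\Qv$-rational eigenvector of $\genK_{\ol{i}}$ forces $\omega_i=\pm v$; if your extraction step worked for every $i$, every finite-dimensional irreducible would have all $\omega_i=\pm v$, which is false (the irreducible summands in Theorem \ref{mod_decomp_irr} have $\omega_1=v^{r}$ with $r\ge 2$). In fact for $r>1$ no such rational eigenvector exists --- that is precisely why $\dim M_\omega=2$ in Proposition \ref{prop_module_top_structure} --- so the ``appropriate $\genK_{\ol{i}}$'' you need is generically not available. (c) The induction on $n$ and the $n=2$ base case are announced but not carried out. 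As written, the passage from the ratio conditions $\omega_j\omega_{j+1}^{-1}\in\pm v^{\NN}$ to the individual conditions $\omega_i\in\pm v^{\NN}$ is missing, and that passage is the entire content of the proposition beyond the classical $U_v(\mathfrak{sl}_n)$ statement.
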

\begin{proof}
The proof is analogous to \cite[Theorem 10.1.7]{CP}.

Assume   {$M$} is a finite dimensional irreducible {$\Uv$}-module.
Let {$\wQv$} be the algebraic closure of {$\Qv$},
and denote {$\wave{P} = \wQv \setminus \{0\} $} be the multiplication group of {$\wQv$}.
We denote 
\begin{align*}
\wave{M} = M {\otimes}_{\Qv} \wQv, \qquad
\wave{\Uv} =\Uv {\otimes}_{\Qv} \wQv.
\end{align*}
Then {$\wave{M}$} is a {$\wave{\Uv}$}-module.
Without confusion, for any {$X \in \Uv$},
the element {$X \otimes 1 \in \wave{\Uv}$} is still denoted as {$X $}.
View {$M$} as a subspace of {$\wave{M}$}, and {$\Uv$} as a subalgebra of {$\wave{\Uv}$}.

As {${\genK}_{i}$} commutes with {${\genK}_{j}$} for any {$i$} and {$j$}, 
there is a simultaneous eigenvector {$\wave{m} \in \wave{M} $} and {$\omega \in {\wave{P}}^{n}$}
such that {${\genK}_{i} \wave{m} = \omega_i \wave{m}$} for all $i$. Then
\begin{align*}
\wave{M} = \bigoplus_{\omega \in {\wt(M) }} \wave{M}_{\omega}.
\end{align*}
By Lemma \ref{fd_weight}, 
there is a unique {$\gamma \in \wt(\wave{M}) $}
such that {$\gamma \ge \omega$} for any  {$\omega \in \wt(\wave{M}) $}.

Choose a non-zero {$\wave{m} \in  \wave{M}_{\gamma} $}
and fix $ 1 \le j \le n-1$, 
set  {$\wave{m}_{j,p} = {\genF}_{j}^{(p)} \wave{m}$}.
Then by Proposition \ref{mp_shift}, 
if {$\gamma_{j}  \notin {v}^{\NN}$}, 
the last equation shows {$\wave{m}_{j,p} \ne 0$} for each  {$p$}, 
while the first equation shows  
{$\{ \wave{m}_{j,p} \where p = 1, 2, \cdots  \}$} 
is a set of  linearly independent vectors,
which contradicts to {$\mbox{dim}_{\wQv} \wave{M} < \infty$}.
Hence
there exist an integer $k > 0$ such that 
$\wave{m}_{j,k} \ne 0 $ and $\wave{m}_{j,k+1} = 0 $,
which means
\begin{align*}
0 
= {\genE}_{j} m_{j,k+1} 
=
		   \frac{   
		  	\gamma_{j}   {v}^{-k} 
		  	-  \gamma_{j}^{-1}  {v}^{k} 
		  	  }  {  {v} - {v}^{-1} }
		  	  m_{j,k} .
\end{align*}
and hence we have 
\begin{align*}
		\gamma_{j}   {v}^{-k} 
= \gamma_{j}^{-1}  {v}^{k} 
\end{align*}
and $\gamma_{j}  =   \pm {v}^{k} \in \pm{v}^{\NN} \subset \Qv$,
which means {$\gamma \in {(\Qv)}^{n}$},
and there is a vector {$ m \in M$} of weight {$\gamma$},
hence  {$ m \in M$} is a highest vector of highest  weight {$\gamma$} 
and {$M = \Uv m$}.
\end{proof}

\spaceintv
\section{The regular representation of   
twisted queer $q$-Schur superalgebra {$\SQvnrR$}}\label{regular}

By \cite{DGLW2025}, 
the twisted queer $q$-Schur superalgebra {$\SQvnrR$} is a left {$\Uv$}-module.
In this section,
we study the structure of {$\SQvnrR$} as {$\Uv$}-module.
We use {$P = {\Qv}^{\times }$} as above.

\begin{prop}
\cite[Lemma 2.3]{DLZ}
Let  $M$ be a polynomial weight supermodule and {$\lambda \in \wt(M)$}.
Then {$E_{h} M_{\lambda} = 0$} if {$\lambda_{h+1} = 0$},
and {$F_{h} M_{\lambda} = 0$} if {$\lambda_{h} = 0$}.
\end{prop}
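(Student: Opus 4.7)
The plan is to reduce the claim to the weight-shift formulas of Proposition~\ref{wt_shift} combined with the polynomial hypothesis $\wt(M) \subset (v^{\NN})^n$. Throughout, I adopt the standard convention (implicit in Theorem~\ref{mulformAonPhi}, where $\genK_h\Phi_{\Ad}=v^{\lambda_h}\Phi_{\Ad}$ with $\lambda=\ro(A)\in\NN^n$) that a weight of a polynomial module is recorded by its exponent vector in $\NN^n$, with the actual $\genK_i$-eigenvalue on $M_\lambda$ being $v^{\lambda_i}$. Under this identification, the hypothesis $\lambda_{h+1}=0$ means that the $(h+1)$-th exponent is zero, i.e.\ the corresponding $\genK_{h+1}$-eigenvalue is $1$.

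First, I would invoke Proposition~\ref{wt_shift} to obtain the containment
\[
\genE_h\, M_\lambda \;\subset\; M_{v^{\alpha_h}\lambda},
\]
where $\alpha_h=\bs\ep_h-\bs\ep_{h+1}$. Written in exponent coordinates, the target weight $\lambda+\alpha_h$ has $h$-th component $\lambda_h+1$ and $(h+1)$-th component $\lambda_{h+1}-1$, with all other components unchanged. Under the hypothesis $\lambda_{h+1}=0$, the $(h+1)$-th exponent of the shifted weight is $-1$, so $\lambda+\alpha_h\notin\NN^n$.

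Next, I would use the definition of polynomial weight module: $\wt(M)\subset(v^{\NN})^n$ forces $M_\mu=0$ whenever the exponent vector of $\mu$ has a negative entry. Therefore $M_{v^{\alpha_h}\lambda}=0$, and hence $\genE_h M_\lambda=0$. The argument for $\genF_h$ is symmetric: Proposition~\ref{wt_shift} gives $\genF_h M_\lambda\subset M_{v^{-\alpha_h}\lambda}$, and if $\lambda_h=0$ the $h$-th component of $\lambda-\alpha_h$ becomes $-1$, again outside $\NN^n$, forcing $\genF_h M_\lambda=0$.

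There is essentially no obstacle: the entire content of the proof is the root-vector weight shifts of Proposition~\ref{wt_shift} combined with the definition of ``polynomial''. The only subtlety worth flagging is the bookkeeping between multiplicative weights in $P^n=({\Qv}^\times)^n$ and their additive exponent vectors in $\NN^n$; once that identification is made explicit at the outset, both assertions follow in a single line each.
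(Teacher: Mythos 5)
Your proof is correct. Note that the paper itself offers no proof of this proposition --- it is simply quoted from \cite[Lemma 2.3]{DLZ} --- so there is nothing internal to compare against; your argument is the natural self-contained one. The two ingredients you use are exactly what is needed: the weight-shift containments $\genE_h M_\lambda \subset M_{v^{\alpha_h}\lambda}$ and $\genF_h M_\lambda \subset M_{v^{-\alpha_h}\lambda}$ from Proposition~\ref{wt_shift}, together with the observation that a weight whose exponent vector acquires a $-1$ entry lies outside $(v^{\NN})^n$ and hence outside $\wt(M)$, forcing the target weight space to vanish. Your explicit handling of the multiplicative-versus-additive weight bookkeeping (the convention that $\lambda_{h+1}=0$ refers to the exponent, so the $\genK_{h+1}$-eigenvalue is $1$ and shifts to $v^{-1}\notin v^{\NN}$) is the only point where one could slip, and you have it right.
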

\begin{rem}
The converse may not be correct. For example, 
let {$n=3, r=6$}, {$\lambda = (3,2,1)$}.
The Verma module {$M(\lambda) $} is a highest weight of {$\lambda$}, 
we have {$E_{1} M_{\lambda} = 0$} but {$\lambda_{2} \ne 0$}.
\end{rem}

\begin{prop}\label{module_poly_high}
Let {$M$} be an irreducible submodule of {$\SQvnrR$}, then 
{$M$} is polynomial weight module.
Furthermore, $M$ is a highest weight module of highest weight {${v}^{\omega} \in P^n$} 
with {$\omega_1 =  {r}$},   {$\omega_i = 0$} for all {$2 \le i \le n$}.  
\end{prop}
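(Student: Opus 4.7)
The plan proceeds in three stages, with the third stage being the main technical hurdle.

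First I would establish polynomial weight-ness directly from the realization. Theorem \ref{mulformAonPhi}(1) shows that each basis element $\Phi_\Ad$ of $\SQvnrR$ is a joint $\genK$-eigenvector of weight $v^{\ro(A)}$ with $\ro(A) \in \CMN(n, r) \subset \NN^n$. Hence $\SQvnrR = \bigoplus_{\lambda \in \CMN(n, r)} (\SQvnrR)_{v^\lambda}$ is itself a polynomial weight module. Since the commuting semisimple operators $\genK_1, \ldots, \genK_n$ stabilize every $\Uv$-submodule, $M$ inherits the weight decomposition $M = \bigoplus_\lambda M_{v^\lambda}$, which gives the first assertion.

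Second, because $\MNZ(n, r)$ is finite, $\SQvnrR$ and hence $M$ are finite-dimensional over $\Qv$. Proposition \ref{fd_weight} then provides a unique maximal weight $v^\omega$ for $M$ with $\omega \in \CMN(n, r)$ and exhibits $M$ as a highest weight module generated by some highest weight vector $m \in M_{v^\omega}$.

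Finally, to identify $\omega = (r, 0, \ldots, 0)$, I would use that this is the unique maximum of $\CMN(n, r)$ under the partial order induced by $\bs{Q}^+$; hence it suffices to show $M_{v^{(r, 0, \ldots, 0)}} \neq 0$. The plan is to iterate $\genE_h$'s to push any nonzero element of $M$ up to the top weight space: given $0 \neq m' \in M_{v^\lambda}$ with $\lambda_{h+1} > 0$, one shows $\genE_h m' \neq 0$, so that repeated applications eventually produce a nonzero top-weight vector inside $M$. Reducing to the weight space, the core technical point is the injectivity of $\genE_h$ restricted to $(\SQvnrR)_{v^\lambda}$ whenever $\lambda_{h+1} > 0$.

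The main obstacle is this injectivity, since the formula for $\genE_h \Phi_\Ad$ in Theorem \ref{mulformAonPhi}(2) splits into three sums that entangle $\SE{A}$ and $\SO{A}$ with nontrivial $q$-coefficients. I expect to tackle it by choosing a lexicographic order on $\MNZ(n, r)$ (for instance, by reading the $(h+1)$-th row entries from right to left, breaking ties by the odd part) and then identifying in each $\genE_h \Phi_\Ad$ a distinguished ``leading'' summand whose target matrix depends injectively on $\Ad$. Linear independence of these leading contributions would then force all coefficients in $m = \sum c_\Ad \Phi_\Ad$ to vanish under $\genE_h m = 0$, contradicting $m \neq 0$ and proving $\omega_{h+1} = 0$ for every $h$, whence $\omega = (r, 0, \ldots, 0)$.
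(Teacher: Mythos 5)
Your first two stages are fine and coincide with the paper's argument: finite-dimensionality plus Theorem~\ref{mulformAonPhi}(1) give the polynomial weight decomposition, and Proposition~\ref{fd_weight} reduces everything to showing that the unique maximal weight is $v^{(r,0,\dots,0)}$, equivalently that $M_{v^{(r,0,\dots,0)}}\neq 0$.

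The gap is in your third stage: the injectivity of $\genE_h$ on $(\SQvnrR)_{v^\lambda}$ for $\lambda_{h+1}>0$ is \emph{false}, so no leading-term argument can establish it. Take $n=r=2$, $h=1$, $\lambda=(1,1)$, and the two purely even matrices $E_{1,1}+E_{2,2}$ and $E_{1,2}+E_{2,1}$. Theorem~\ref{mulformAonPhi}(2) gives (only the first sum contributes, with $k=2$ and $k=1$ respectively, and $\STEP{1}=1$)
\begin{align*}
\genE_1\,\Phi_{(E_{1,1}+E_{2,2}\,|\,O)}=v^{-1}\,\Phi_{(E_{1,1}+E_{1,2}\,|\,O)},\qquad
\genE_1\,\Phi_{(E_{1,2}+E_{2,1}\,|\,O)}=v^{-1}\cdot v^{2\BK(1,1)}\,\Phi_{(E_{1,1}+E_{1,2}\,|\,O)}=v\,\Phi_{(E_{1,1}+E_{1,2}\,|\,O)},
\end{align*}
so $\genE_1\bigl(v\,\Phi_{(E_{1,1}+E_{2,2}|O)}-v^{-1}\Phi_{(E_{1,2}+E_{2,1}|O)}\bigr)=0$ although $\lambda_2=1>0$; note both matrices have column sum $(1,1)$, so the kernel vector even lies in a single block $\SQvnrR^{(1,1)}$. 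This is exactly the kind of collision you anticipated (two matrices whose ``leading'' targets coincide), but it is not an artifact of a bad ordering --- the kernel is genuinely nonzero. What you actually need is weaker and different: a nonzero vector of $M_{v^\lambda}$ killed by $\genE_h$ alone need not obstruct anything, because a highest weight vector must be annihilated by the odd generators $\genE_{\ol{h}}$ as well; the statement to prove is that $\bigcap_{h}\bigl(\ker\genE_h\cap\ker\genE_{\ol{h}}\bigr)$ meets $(\SQvnrR)_{v^\lambda}$ trivially unless $\lambda=(r,0,\dots,0)$, or (as the paper does) the non-vanishing $\genE_j M_\lambda\neq 0$ for the given irreducible $M$ whenever $\lambda_{j+1}\neq 0$. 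The paper's own proof simply asserts this last equivalence as a consequence of Theorem~\ref{mulformAonPhi}(2) without detail, so your instinct that this is the real technical content is correct --- but the route through injectivity of a single even $\genE_h$ on the full weight space cannot be repaired.
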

\begin{proof}
As  {$\SQvnrR = \tspan \{\Phi_{\Ad} \where {\Ad } \in \MNZ(n, r) \}$} is  finite dimensional, 
so is its submodule {$M$}.
 For any {$\lambda \in \CMN(n, r)$},
 \begin{align*}
 M_{\lambda} = \{  x = \sum  k_{\Xd } {\Phi}_{\Xd} \in M \where \ro(X) = \lambda, \  k_{\Xd} \in \Qv\}
 \end{align*}
  is the weight space of weight {${v}^{\lambda}$}.
Indeed,  by Theorem \ref{mulformAonPhi}\rm{(1)},
for any  {$\Xd \in \MNZ(n, r) $} satisfying {$\ro(X) = \lambda$}, 
 {${\genK}_{i} {\Phi}_{\Xd}  = {v}^{\lambda_i} {\Phi}_{\Xd}$}, 
hence {${\genK}_{i} m = {v}^{\lambda_i} m$}  
 and $M$ is a weight module.
 Furthermore,   Theorem \ref{mulformAonPhi}\rm{(2)} implies
 {${\genE}_{j} M_{\lambda} \ne 0$} if and only if {$\lambda_{j+1} \ne 0$}.
 Then $M$ is a weight module of highest weight {${v}^{\omega}$}
 with {$\omega = (r, 0, \cdots, 0) \in \CMN(n, r)$}.
\end{proof}

\begin{rem}
By Proposition \ref{module_poly_high},
for any  {$\lambda \in \CMN(n, r)$},
we use {$\lambda$} to denote  {${v}^{\lambda} = ({v}^{\lambda_1}, \cdots, {v}^{\lambda_n} )$} when it is viewed as a weight.
\end{rem}

Denote
\begin{align*}
1_r = \sum_{  \mu \in \CMN(n,r)  } {\Phi}_{\mu},  \qquad
1_{\SQvnR} = \sum_{r \ge 0} 1_r .
\end{align*}
It is verified  {$1_{\SQvnR} $} is the unit in {${\SQvnR} $}.
 Without confusion, we denote  {$1_{\SQvnR} $} as {$1$}.
Furthermore, we have
\begin{align*}
{\SQvnR} \cdot 1_r = 1_r \cdot {\SQvnR} = {\SQvnrR}.
\end{align*}

Recall the  superalgebra homomorphism  {$ \bs{\eta}_r : \USnv \rightarrow \SQvnrR$} and  the isomorphism   {$\bs{\xi}_n: \Uvq(n) \to \USnv$},
consider  the composition
\begin{align*}
	 \bs{\eta}_r \bs{\xi}_n  : \Uv & \longrightarrow  \SQvnrR .
\end{align*}
We denote
\begin{align*}
&  {\genK}_{i, r}^{\pm 1} = 	\bs{\eta}_r \bs{\xi}_n ({\genK}_{i}^{\pm 1})  , \qquad 
 {\genE}_{j,r}=  	\bs{\eta}_r \bs{\xi}_n ({\genE}_{j}) , \qquad 
{\genF}_{j, r} =  	\bs{\eta}_r \bs{\xi}_n  ({\genF}_{j}) , \\
&	 {\genK}_{\ol{i}, r} = \bs{\eta}_r \bs{\xi}_n ({\genK}_{\ol{i}}) , \qquad
	 {\genE}_{\ol{j},r} = \bs{\eta}_r \bs{\xi}_n ({\genE}_{\ol{j}}) , \qquad
	{\genF}_{\ol{j}, r} = \bs{\eta}_r \bs{\xi}_n({\genF}_{\ol{j}}) .
\end{align*}
 
Denote 
\begin{align*}
 \intds(X,k) =  \prod_{t=1}^{k} \frac{  {v}^{ 1 - t} X - {v}^{ - 1 + t} X^{-1}  }{ {v}^t - {v}^{-t} }.
\end{align*}
For any   {$\bs{k} \in \NN^n$},  set
\begin{align*}
\intdss(\bs{\genK}_r, \bs{k}) 
= \prod_{i=1}^{n} \intdss({\genK}_{i, r}, {k_i})
=\prod_{i=1}^{n}   \prod_{t=1}^{k_i} \frac{   {v}^{ 1 - t} {\genK}_{i, r}  - {v}^{ - 1 + t}  {\genK}_{i, r}^{-1}  }{ {v}^t - {v}^{-t} }.
\end{align*}

\begin{lem}\label{lem_bino1}
Let  $p,u \in \NN$ and $p>u$.
We have 
\begin{align*}
\prod_{t=1}^{u} 
		\frac{ {v}^{p - t } 	  - {v}^{-p+ t}  }
		{ {v}^{t} - {v}^{-t} } \in \ZZ[{v}, {v}^{-1}] .
\end{align*}
\end{lem}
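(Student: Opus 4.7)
The plan is to recognize the displayed product as a Gaussian ($v$-)binomial coefficient and then invoke the well-known integrality result for such coefficients. First I would rewrite each factor in terms of the quantum integer $[m]_v := (v^m - v^{-m})/(v - v^{-1})$. Factoring $(v - v^{-1})$ from numerator and denominator in each fraction yields
\begin{align*}
\prod_{t=1}^{u} \frac{v^{p-t} - v^{-p+t}}{v^t - v^{-t}}
= \prod_{t=1}^{u} \frac{[p-t]_v}{[t]_v}
= \frac{[p-1]_v [p-2]_v \cdots [p-u]_v}{[u]_v!}
= \begin{bmatrix} p-1 \\ u \end{bmatrix}_v,
\end{align*}
where the last expression is the usual Gaussian binomial coefficient. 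The hypothesis $p > u$ (equivalently $p-1 \ge u$) guarantees that this is a genuine Gaussian binomial rather than a formal quotient.

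Next I would appeal to the standard fact (see, e.g., Lusztig) that $\begin{bmatrix} n \\ k \end{bmatrix}_v \in \ZZ[v, v^{-1}]$ for all integers $n \ge k \ge 0$. The cleanest self-contained justification is induction on $n = p-1$ using one of the $v$-Pascal identities, for instance
\begin{align*}
\begin{bmatrix} n \\ k \end{bmatrix}_v
= v^{k} \begin{bmatrix} n-1 \\ k \end{bmatrix}_v
+ v^{k-n} \begin{bmatrix} n-1 \\ k-1 \end{bmatrix}_v,
\end{align*}
which expresses $\begin{bmatrix} n \\ k \end{bmatrix}_v$ as a $\ZZ[v, v^{-1}]$-linear combination of two binomials with smaller top index, both of which lie in $\ZZ[v, v^{-1}]$ by induction. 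The base cases $k = 0$ or $k = n$ give $1$, which is trivially in $\ZZ[v, v^{-1}]$.

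The only step requiring any care is verifying the $v$-Pascal identity itself, and this is a routine manipulation: multiply through by $[k]_v! [n-k]_v!$ and reduce to the identity $[n]_v = v^k [n-k]_v + v^{k-n} [k]_v$, which follows directly from the definition of $[m]_v$. Since this is entirely formal, I do not expect any genuine obstacle; the proof is essentially a one-line identification with the Gaussian binomial coefficient, followed by a citation (or the two-line Pascal induction) for its integrality over $\ZZ[v, v^{-1}]$.
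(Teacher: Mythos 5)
Your proof is correct, and it rests on the same underlying idea as the paper's — identifying the product as a Gaussian binomial coefficient — but the execution differs in two ways worth noting. The paper clears denominators by multiplying each factor by $v^t/v^t$, substitutes $q=v^2$, and peels off a unit prefactor $v^{-pu}\prod_{t=1}^{u}v^{2t}$ so that what remains is the one-sided $q$-binomial $\binom{p-1}{u}_q$ with $(m)_q=\frac{q^m-1}{q-1}$; integrality is then obtained by citing Kassel's Proposition IV.2.1. You instead divide numerator and denominator of each factor by $v-v^{-1}$ to get the balanced quantum integers $[m]_v$, so the product is \emph{exactly} the symmetric Gaussian binomial
\begin{align*}
\prod_{t=1}^{u}\frac{[p-t]_v}{[t]_v}=\begin{bmatrix} p-1\\ u\end{bmatrix}_v,
\end{align*}
with no prefactor to track, and you justify integrality by a self-contained induction on the $v$-Pascal identity $[n]_v=v^k[n-k]_v+v^{k-n}[k]_v$ (which checks out). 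Your hypothesis usage is also right: $p>u$ ensures $p-t\ge 1$ for $t\le u$, so every $[p-t]_v$ is a genuine quantum integer. The trade-off is that the paper's route leans on a standard reference and some bookkeeping with units of $\ZZ[v,v^{-1}]$, while yours is slightly longer on paper but fully elementary and avoids the change of variable to $q=v^2$ entirely. Either is acceptable.
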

\begin{proof}
Recall the notation from  \cite[IV.2 Equation (2.1)]{Kas}
\begin{align*}
(n)_{q} =  1 + {q} + {q}^2 + \cdots + {q}^{n-1} = \frac{{q}^{n} - 1}{{q} - 1},
\end{align*}
and  \cite[Proposition IV.2.1]{Kas} shows
\begin{align*}
		\frac{ \prod_{t=1}^{u} {( p - t)}_{q}}{  \prod_{t=1}^{u}  {(t)}_{q}} 
  =
	\frac{  {( p - 1)}_{q} {( p - 2)}_{q} \cdots {( p - u)}_{q} }
	{    {(1)}_{q}    {(2)}_{q} \cdots   {(u)}_{q}} 
= 
	\frac{  {( p - 1)}_{q} {( p - 2)}_{q} \cdots {( p - u)}_{q}  \times{( p -1 - u)!}_{q} }
	{  {(u)!}_{q} \times{( p -1 - u)!}_{q}} 
= {\left(\begin{matrix}{p-1}\\{u}\end{matrix}\right)}_{q}
 \in \ZZ[{q}] =   \ZZ[{v}^2] .
\end{align*}
As a consequence,
\begin{align*}
\prod_{t=1}^{u} 
		\frac{ {v}^{p - t } - {v}^{-p+ t}  }{ {v}^{t} - {v}^{-t} } 
& = 
\prod_{t=1}^{u} 
		\frac{ {v}^{p  } - {v}^{-p+2 t}  }{ {v}^{2t} - 1 }  \\
& = 
\prod_{t=1}^{u}  (
{v}^{-p} \cdot 
		\frac{ {v}^{ 2p  } - {v}^{2 t}  }{ {v}^{2t} - 1 }  ) \\
& = 
{v}^{-p u } \cdot 
\prod_{t=1}^{u} {v}^{2t}
\cdot 
\prod_{t=1}^{u} 
		\frac{ {v}^{ 2p - 2t } -  1}{ {v}^{2t} - 1 }  \\
& = 
{v}^{-p u } \cdot 
\prod_{t=1}^{u} {v}^{2t}
\cdot 
\prod_{t=1}^{u} 
		\frac{ {q}^{ p - t } -  1}{ {q}^{t} - 1 }  \qquad \mbox{( by {${q} = {v}^{2} $}) } \\
& = 
{v}^{-p u } \cdot 
\prod_{t=1}^{u} {v}^{2t}
\cdot 
		\frac{ \prod_{t=1}^{u} {( p - t)}_{q}}{  \prod_{t=1}^{u}  {(t)}_{q}}  
		\qquad
		\in \ZZ[{v}, {v}^{-1}] 
\end{align*}
and the result is proved.
\end{proof}
\begin{lem}\label{binorm_mul}
For any  {$1 \le i \le n$}, {$u \in \NN$} , {$\bs{k} \in \NN^n$} with {$\snorm{\bs{k}} \le r$} we have
\begin{align*}
&{\rm(1)} \quad \intdss({\genK}_{i, r},u) 
  =\sum_{\substack{  \\ \lambda \in \CMN(n,r) \\ \lambda_i  \ge p   }} 
	\prod_{t=1}^{u} 
		\frac{ {v}^{\lambda_i + 1 - t } 	  - {v}^{- \lambda_i - 1 + t}  }
		{ {v}^t - {v}^{-t} } {\Phi}_{(\lambda |O )} , \\
&{\rm(2)} \quad \intdss(\bs{\genK}_r, \bs{k}) 
 =\sum_{\substack{  \\ \lambda \in \CMN(n,r) \\ \lambda  \ge \bs{k}    }} 
 \prod_{i=1}^{n}   
	\prod_{t=1}^{k_i} 
		\frac{ {v}^{\lambda_i + 1 - t } 	  - {v}^{- \lambda_i - 1 + t}  }
		{ {v}^t - {v}^{-t} } {\Phi}_{(\lambda |O )} .
\end{align*}
where {${\lambda}  \ge  \bs{k}$} means {${\lambda}_i  \ge  {k}_i$} for all {$i$}.
Furthermore, we have 
{$ \intdss({\genK}_{i, r},u) , \intdss(\bs{\genK}_r, \bs{k}) \in \SQvnrRZ$}.
\end{lem}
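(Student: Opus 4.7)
The plan is to exploit the fact that the elements $\genK_{i,r}$ act diagonally on the idempotents $\Phi_{(\lambda|O)}$, so the divided-difference expressions $\intdss(\genK_{i,r},u)$ act as scalars that can be read off directly. All the work reduces to a bookkeeping exercise plus an appeal to Lemma~\ref{lem_bino1} for the integrality.

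First I would fix the key eigenvalue identity. By Theorem~\ref{mulformAonPhi}(1), for each $\lambda \in \CMN(n,r)$ the operator $\genK_{i,r}$ acts on $\Phi_{(\lambda|O)}$ by multiplication by ${v}^{\lambda_i}$, and $\genK_{i,r}^{-1}$ acts by ${v}^{-\lambda_i}$. Since $1_r=\sum_{\mu\in\CMN(n,r)}\Phi_{(\mu|O)}$ is the unit of $\SQvnrR$, I can write
\[
\intdss(\genK_{i,r},u)=\intdss(\genK_{i,r},u)\cdot 1_r=\sum_{\lambda\in\CMN(n,r)}\intdss(\genK_{i,r},u)\cdot\Phi_{(\lambda|O)}.
\]
On each $\Phi_{(\lambda|O)}$ the factor $\frac{{v}^{1-t}\genK_{i,r}-{v}^{-1+t}\genK_{i,r}^{-1}}{{v}^t-{v}^{-t}}$ acts by the scalar $\frac{{v}^{\lambda_i+1-t}-{v}^{-\lambda_i-1+t}}{{v}^t-{v}^{-t}}$, and multiplying the $u$ factors together gives exactly the coefficient that appears in the claim, proving (1).

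The restriction $\lambda_i\ge u$ is automatic: if $\lambda_i<u$ then the index $t=\lambda_i+1$ lies in $\{1,\ldots,u\}$ and produces the vanishing numerator ${v}^{0}-{v}^{0}=0$, killing the entire coefficient. For (2) I would iterate (1), using that $\genK_{1,r},\ldots,\genK_{n,r}$ pairwise commute and are simultaneously diagonalised on the basis of idempotents $\Phi_{(\lambda|O)}$; concretely,
\[
\intdss(\bs{\genK}_r,\bs{k})\cdot\Phi_{(\lambda|O)}=\Big(\prod_{i=1}^{n}\prod_{t=1}^{k_i}\frac{{v}^{\lambda_i+1-t}-{v}^{-\lambda_i-1+t}}{{v}^t-{v}^{-t}}\Big)\Phi_{(\lambda|O)},
\]
and the support reduces to $\lambda\ge\bs{k}$ by the same vanishing argument applied coordinatewise.

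Finally, for the integrality statement I would invoke Lemma~\ref{lem_bino1} with $p=\lambda_i+1$: the condition $\lambda_i\ge k_i$ (or $\lambda_i\ge u$ in part (1)) translates to $p>k_i$ (resp.\ $p>u$), so each factor $\prod_{t=1}^{k_i}\frac{{v}^{\lambda_i+1-t}-{v}^{-\lambda_i-1+t}}{{v}^t-{v}^{-t}}$ lies in $\ZZ[{v},{v}^{-1}]=\fcZ$; since $\Phi_{(\lambda|O)}\in\SQvnrRZ$, the whole expression lies in $\SQvnrRZ$. I do not foresee a serious obstacle here: once the diagonal action is set up cleanly, the rest is linear algebra together with the already-established integrality of the quantum binomials.
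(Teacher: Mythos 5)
Your proposal is correct and follows essentially the same route as the paper: expand $\intdss({\genK}_{i, r},u)$ over the orthogonal idempotents ${\Phi}_{(\lambda|O)}$ on which ${\genK}_{i,r}^{\pm1}$ acts by ${v}^{\pm\lambda_i}$, observe that the factor at $t=\lambda_i+1$ kills all terms with $\lambda_i<u$, and apply Lemma~\ref{lem_bino1} with $p=\lambda_i+1>u$ for the integrality. The only cosmetic difference is that the paper writes out the sums explicitly rather than phrasing the computation as right multiplication by $1_r$.
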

\begin{proof}
By
\begin{align*}
{\genK}_{i, r}
=  \sum_{\substack{  \\ \lambda \in \CMN(n,r)   }} 
		{v}^{\lambda_i}  {\Phi}_{(\lambda |O )}, \qquad
{\genK}_{i, r}^{-1}
=  \sum_{\substack{  \\ \lambda \in \CMN(n,r)   }} 
		{v}^{- \lambda_i}  {\Phi}_{(\lambda |O )}, \qquad
\end{align*}
we have 
\begin{align*}
\frac{ {\genK}_{i, r} {v}^{  1 -  t} - {\genK}_{i, r}^{-1} {v}^{  - 1 + t} }{ {v}^t - {v}^{-t} } 
&=\frac{  \sum_{\substack{  \\ \lambda \in \CMN(n,r)   }} 
		{v}^{\lambda_i}  {\Phi}_{(\lambda |O )} \cdot{v}^{  1 -  t}
		 -   \sum_{\substack{  \\ \lambda \in \CMN(n,r)   }} 
		{v}^{- \lambda_i}  {\Phi}_{(\lambda |O )} \cdot {v}^{  - 1 + t} }
		 { {v}^t - {v}^{-t} }  \\
&=  \sum_{\substack{  \\ \lambda \in \CMN(n,r)   }} 
		\frac{ {v}^{\lambda_i + 1 - t } 	  - {v}^{- \lambda_i - 1 + t}  }
		{ {v}^t - {v}^{-t} } {\Phi}_{(\lambda |O )},
\end{align*}
and 
\begin{align*}
\intdss({\genK}_{i, r}, u) 
&=  \prod_{t=1}^{u} \frac{ {\genK}_{i, r} {v}^{ 1 - t} - {\genK}_{i, r}^{-1} {v}^{ - 1 + t} }{ {v}^t - {v}^{-t} } \\
&=  \prod_{t=1}^{u} 
	\sum_{\substack{  \\ \lambda \in \CMN(n,r)   }} 
		\frac{ {v}^{\lambda_i + 1 - t } 	  - {v}^{- \lambda_i - 1 + t}  }
		{ {v}^t - {v}^{-t} } {\Phi}_{(\lambda |O )}  \\
&=  
	\sum_{\substack{  \\ \lambda \in \CMN(n,r)   }} 
	\prod_{t=1}^{u} 
		\frac{ {v}^{\lambda_i + 1 - t } 	  - {v}^{- \lambda_i - 1 + t}  }
		{ {v}^t - {v}^{-t} } {\Phi}_{(\lambda |O )}  .
\end{align*}
Considering the fact that for {$p \in \NN$},  
\begin{align*}
&\prod_{t=1}^{u} 
		\frac{ {v}^{p - t } 	  - {v}^{-p+ t}  }
		{ {v}^{t} - {v}^{-t} } 
=
		\frac{ {v}^{p - 1 } 	  - {v}^{-p+ 1}  }
		{ {v}^1- {v}^{-1} } \times
		\frac{ {v}^{p - 2 } 	  - {v}^{-p+ 2}  }
		{ {v}^{2} - {v}^{-2} }  \times 
		\cdots  \times
		\frac{ {v}^{p - u } 	  - {v}^{-p+ u}  }
		{ {v}^{u} - {v}^{-u} }   .
\end{align*}
When {$p \le u$}, 
direct calculation shows
\begin{align*}
\prod_{t=1}^{u} 
		\frac{ {v}^{p - t } 	  - {v}^{-p+ t}  }
		{ {v}^{t} - {v}^{-t} }    = 0.
\end{align*}
For the case  {$p > u$}, Lemma \ref{lem_bino1}  implies
\begin{align*}
\prod_{t=1}^{u} 
		\frac{ {v}^{p - t } 	  - {v}^{-p+ t}  }
		{ {v}^{t} - {v}^{-t} } \in \fcZ .
\end{align*}
As a consequence, 
\begin{align*}
\intdss({\genK}_{i, r}, u) 
&=  
	\sum_{\substack{  \\ \lambda \in \CMN(n,r) \\ \lambda_i \ge u   }} 
	\prod_{t=1}^{u} 
		\frac{ {v}^{\lambda_i + 1 - t } 	  - {v}^{- \lambda_i - 1 + t}  }
		{ {v}^t - {v}^{-t} } {\Phi}_{(\lambda |O )} \in \SQvnrRZ, 
\end{align*}
and
\begin{align*}
\intdss(\bs{\genK}_r, \bs{k}) 
& = \prod_{i=1}^{n} \intdss({\genK}_{i, r}, {k_i}) \\
& = \prod_{i=1}^{n}   
	\sum_{\substack{  \\ \lambda \in \CMN(n,r) \\ \lambda_i \ge k_i   }} 
	\prod_{t=1}^{k_i} 
		\frac{ {v}^{\lambda_i + 1 - t } 	  - {v}^{- \lambda_i - 1 + t}  }
		{ {v}^t - {v}^{-t} } {\Phi}_{(\lambda |O )} \\
& =
\sum_{\substack{  \\ \lambda \in \CMN(n,r) \\ \lambda \ge \bs{k}  }} 
 \prod_{i=1}^{n}   
	\prod_{t=1}^{k_i} 
		\frac{ {v}^{\lambda_i + 1 - t } 	  - {v}^{- \lambda_i - 1 + t}  }
		{ {v}^t - {v}^{-t} } {\Phi}_{(\lambda |O )} \in \SQvnrRZ.
\end{align*}
\end{proof}

\begin{cor}\label{cor_binorm_mul}
For any  {$ \alpha \in \CMN(n, r)$}, we have
\begin{align*}
\intdss(\bs{\genK}_r, \alpha)   =	{\Phi}_{( \alpha | O )}.
\end{align*}
\end{cor}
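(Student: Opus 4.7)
The plan is to specialize Lemma~\ref{binorm_mul}(2) to the case $\bs{k} = \alpha \in \CMN(n,r)$ and then show that the sum collapses to a single term with coefficient $1$. First I would write down the identity
\[
\intdss(\bs{\genK}_r, \alpha)
=\sum_{\substack{\lambda \in \CMN(n,r) \\ \lambda \ge \alpha}}
 \prod_{i=1}^{n}\prod_{t=1}^{\alpha_i}
  \frac{v^{\lambda_i + 1 - t} - v^{-\lambda_i - 1 + t}}{v^t - v^{-t}} \,
  {\Phi}_{(\lambda \mid O)},
\]
which comes directly from Lemma~\ref{binorm_mul}(2).

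Next I would argue that the indexing set contains only $\lambda = \alpha$. Indeed both $\lambda$ and $\alpha$ belong to $\CMN(n,r)$, so $\sum_i \lambda_i = \sum_i \alpha_i = r$; combined with the componentwise inequalities $\lambda_i \ge \alpha_i$, this forces $\lambda_i = \alpha_i$ for every $i$. Hence exactly one summand survives, namely the one with $\lambda = \alpha$.

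It remains to evaluate the coefficient at $\lambda = \alpha$. For each fixed $i$ I would perform the substitution $s = \alpha_i + 1 - t$, so that as $t$ runs from $1$ to $\alpha_i$, $s$ also runs through $\{1, \ldots, \alpha_i\}$ in the opposite order. Then
\[
\prod_{t=1}^{\alpha_i} \frac{v^{\alpha_i + 1 - t} - v^{-\alpha_i - 1 + t}}{v^t - v^{-t}}
= \frac{\prod_{s=1}^{\alpha_i}(v^s - v^{-s})}{\prod_{t=1}^{\alpha_i}(v^t - v^{-t})}
= 1,
\]
and taking the product over $i$ still gives $1$. The final statement follows immediately.

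There is no real obstacle here; the argument is a direct unwinding of Lemma~\ref{binorm_mul}(2). The only mild subtlety is the combinatorial observation that $\CMN(n,r)$ has no strict refinement under the componentwise order, so verifying that step cleanly is the only thing one should be careful about.
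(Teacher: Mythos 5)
Your proposal is correct and follows essentially the same route as the paper's proof: specialize Lemma~\ref{binorm_mul}(2) to $\bs{k}=\alpha$, note that $\lambda\in\CMN(n,r)$ together with $\lambda\ge\alpha$ forces $\lambda=\alpha$, and observe that the remaining coefficient is $1$ because the numerator factors are a permutation of the denominator factors. The only cosmetic difference is that you make the reindexing $s=\alpha_i+1-t$ explicit where the paper simply writes out the telescoping product.
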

\begin{proof}
Because {$ \lambda \in \CMN(n,r) $} and {$ {\lambda}  \ge \alpha $} imply
{$ {\lambda} = \alpha $}, 
then Lemma \ref{binorm_mul} leads
\begin{align*}
\intdss(\bs{\genK}_r, \alpha) 
&=
\sum_{\substack{  \\ \lambda \in \CMN(n,r) \\ \lambda \ge \alpha  }} 
 \prod_{i=1}^{n}   
	\prod_{t=1}^{{\alpha}_i} 
		\frac{ {v}^{\lambda_i + 1 - t } 	  - {v}^{- \lambda_i - 1 + t}  }
		{ {v}^t - {v}^{-t} } {\Phi}_{(\lambda |O)} \\
&=
 \prod_{i=1}^{n}   
	\prod_{t=1}^{{\alpha}_i} 
		\frac{ {v}^{\alpha_i + 1 - t } 	  - {v}^{- \alpha_i - 1 + t}  }
		{ {v}^t - {v}^{-t} } {\Phi}_{(\alpha |O )} .
\end{align*}
Observing
\begin{align*}
&\prod_{t=1}^{{\alpha}_i} 
		\frac{ {v}^{\alpha_i + 1 - t } 	  - {v}^{- \alpha_i - 1 + t}  }
		{ {v}^t - {v}^{-t} }  \\
&= 
		\frac{ {v}^{\alpha_i + 1 - 1 } 	  - {v}^{- \alpha_i - 1 + 1}  }
		{ {v} - {v}^{-1} }   \times
		\frac{ {v}^{\alpha_i + 1 - 2 } 	  - {v}^{- \alpha_i - 1 + 2}  }
		{ {v}^{2} - {v}^{-2} }  \times
		\cdots \times
		\frac{ {v}^{\alpha_i + 1 - \alpha_i } 	  - {v}^{- \alpha_i - 1 + \alpha_i}  }
		{ {v}^{\alpha_i} - {v}^{-\alpha_i} }   \\
&= 
		\frac{ {v}^{\alpha_i} 	  - {v}^{- \alpha_i }  }
		{ {v} - {v}^{-1} }  \times
		\frac{ {v}^{\alpha_i -1 } 	  - {v}^{- \alpha_i +1}  }
		{ {v}^{2} - {v}^{-2} }    \times
		\cdots  \times
		\frac{ {v}	  - {v}^{-1 }  }
		{ {v}^{\alpha_i} - {v}^{-\alpha_i} }   \\
& = 1,
\end{align*}
then we have
\begin{align*}
\intdss(\bs{\genK}_r, \alpha) 
&= {\Phi}_{(\alpha |O )} .
\end{align*}

\end{proof}

\begin{prop}\label{mod_decp3}
There is a  {$\Uv$}-module decomposition 
\begin{align*}
\SQvnrR = \bigoplus_{\mu \in \CMN(n, r)} \SQvnrR^{\mu},
\end{align*}
where each submodule
\begin{align*}
 \SQvnrR^{\mu} 
 = \tspan \{ 	{\Phi}_{\Ad} 
		\where \Ad \in \MNZ(n, r), \co(A)=\mu  
\}.
\end{align*}
Furthermore,  each submodule $ \SQvnrR^{\mu}$ has a weight space decompisition 
\begin{align*}
 \SQvnrR^{\mu} = \bigoplus_{\lambda \in \CMN(n, r)} \SQvnrR_{\lambda}^{\mu}
\end{align*}
with  each  weight space 
\begin{align*}
\SQvnrR_{\lambda}^{\mu}  = \tspan \{ 	{\Phi}_{\Ad} 
		\where \Ad \in \MNZ(n, r), \ro(A) = \lambda, \co(A)=\mu  
\} 
\end{align*}
of weight $\lambda$.
\end{prop}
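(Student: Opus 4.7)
The plan is to verify two things: that the given subspaces $\SQvnrR^{\mu}$ are genuinely $\Uv$-submodules (not merely subspaces), and that the further decomposition by row sum gives weight spaces. The vector-space part is free: since $\{\Phi_{\Ad} \where \Ad \in \MNZ(n,r)\}$ is a basis of $\SQvnrR$, partitioning this basis by column sum $\co(A) = \mu$ yields the direct sum $\bigoplus_{\mu} \SQvnrR^{\mu}$ as vector spaces, and partitioning further by row sum $\ro(A) = \lambda$ gives the refined decomposition $\SQvnrR^{\mu} = \bigoplus_{\lambda} \SQvnrR^{\mu}_{\lambda}$.

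The main content is showing stability under the $\Uv$-action, for which I would inspect the explicit formulas in Theorem \ref{mulformAonPhi}. The key observation is that every term appearing on the right-hand side of $\genK_h \Phi_{\Ad}$, $\genE_h \Phi_{\Ad}$, $\genF_h \Phi_{\Ad}$, and $\genK_{\ol{n}} \Phi_{\Ad}$ is (a scalar times) some $\Phi_{\Bd}$ where $\Bd$ is obtained from $\Ad$ by moves that affect only a single column $k$ and redistribute mass among rows $h, h+1$ (or row $n$ for $\genK_{\ol{n}}$) between the even and odd parts. For instance, the moves $\SE{A} \mapsto \SE{A} - E_{h+1,k} + E_{h,k}$, $\SE{A} \mapsto \SE{A} + 2E_{h,k}$ combined with $\SO{A} \mapsto \SO{A} - E_{h,k} - E_{h+1,k}$, and the analogous moves for $\genF_h$ and $\genK_{\ol n}$, each preserve the column sum $\sum_i(\SEE{a}_{i,k} + \SOE{a}_{i,k})$. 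I would state this as a short lemma: every $\Bd$ occurring with nonzero coefficient in the expansion of $X \cdot \Phi_{\Ad}$ (for $X$ a generator of $\Uv$) satisfies $\co(B) = \co(A)$. This is then checked term by term from Theorem \ref{mulformAonPhi}; the verification is completely mechanical.

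Having established that the column sum is preserved by every generator, it follows that $\Uv \cdot \SQvnrR^{\mu} \subseteq \SQvnrR^{\mu}$, so each $\SQvnrR^{\mu}$ is a $\Uv$-submodule, proving the first decomposition. For the refined weight decomposition, Theorem \ref{mulformAonPhi}(1) gives ${\genK}_{i}\Phi_{\Ad} = v^{\lambda_i}\Phi_{\Ad}$ whenever $\ro(A) = \lambda$; hence each $\Phi_{\Ad}$ with $\ro(A) = \lambda$ lies in the $v^{\lambda} = (v^{\lambda_1},\dots,v^{\lambda_n})$ weight space of $\SQvnrR$. Conversely, a nonzero linear combination $\sum k_{\Bd}\Phi_{\Bd}$ supported on $\Bd$'s of differing row sums cannot be an eigenvector for all $\genK_i$ simultaneously, since different row sums yield $\Qv$-linearly independent tuples of eigenvalues. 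Therefore $\SQvnrR^{\mu}_{\lambda}$ is exactly the $v^\lambda$-weight space inside $\SQvnrR^\mu$.

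The only step that requires care is the term-by-term column-sum check for $\genE_h$ and $\genF_h$, where several types of summands appear with different mass-transfer patterns among $(\SE{A},\SO{A})$; I expect this bookkeeping to be the sole obstacle, but it is purely combinatorial and follows directly from reading off the formulas.
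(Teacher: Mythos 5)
Your proposal is correct and follows essentially the same route as the paper: partition the basis $\{\Phi_{\Ad}\}$ by column sum to get the vector-space decomposition, check from the multiplication formulas of Theorem~\ref{mulformAonPhi} that every generator preserves the column sum, and read off the weight spaces from $\genK_i\Phi_{\Ad}=v^{\lambda_i}\Phi_{\Ad}$. The one ingredient you leave implicit is that Theorem~\ref{mulformAonPhi} only gives formulas for $\genK_h^{\pm1}$, $\genE_h$, $\genF_h$ and $\genK_{\ol{n}}$, whereas $\Uv$ is presented with additional odd generators $\genE_{\ol{j}}$, $\genF_{\ol{j}}$, $\genK_{\ol{j}}$ for $j\le n-1$; to conclude $\Uv\cdot\SQvnrR^{\mu}\subseteq\SQvnrR^{\mu}$ you must also invoke Remark~\ref{induct_all}, which expresses those remaining generators in terms of the four types you checked --- this is exactly the citation the paper adds, and with it your argument is complete. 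Your extra converse remark (that a combination of basis elements with distinct row sums cannot be a simultaneous $\genK_i$-eigenvector) is not needed for the stated spanning description of the weight spaces, but it is harmless.
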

\begin{proof}
For any $m \in {\SQvnrR}$, 
It is trivial 
\begin{align*}
m = \sum_{\mu \in \CMN(n, r)} \sum_{\co(A) = \mu} k_{\Ad} \Phi_{\Ad}  
\end{align*}
where  {$ {k_{\Ad}} \in \Qv$}  for each {$  {\Ad} \in \MNZ(n, r)$}, 
and  there is  a subspace decomposition
\begin{align*}
{\SQvnrR} = \bigoplus_{\mu \in \CMN(n, r)} {\SQvnrR}^{\mu}.
\end{align*}
For  any {$X \in \Uv$},
 and {$ {\Ad} \in \MNZ(n, r) $}  with {$\co(A) = \mu$},
 Theorem \ref{mulformAonPhi} and Remark \ref{induct_all} imply
 \begin{align*}
X \Phi_{\Ad}
&=\sum_{ \substack{\co(B) = \mu  } } k_{\Bd} {\Phi}_{\Bd} \in {\SQvnrR}^{\mu}.
\end{align*}
Hence  each {${\SQvnrR}^{\mu}$}  is a submodule of ${\SQvnrR}$,
and the module decomposition is proved.

Furthermore, for any  {$m^{\mu} \in {\SQvnrR}^{\mu}$},
it is clear
\begin{align*}
  m^{\mu}  
&=
\sum_{\lambda \in \CMN(n, r)}
\sum_{\substack{
\ro(C) = \lambda, \\
\co(C) = \mu
}} 
 k_C {\Phi}_{C} ,
\end{align*}
with each {$ {k_C} \in \Qv$}.
For each {$ {\Phi}_{C}$} with {$\ro(C) = \lambda $} in the last equation, 
Theorem \ref{mulformAonPhi}\rm{(1)} implies
\begin{align*}
{\genK}_{i} {\Phi}_{C}  
	& = 
		{v}^{\lambda_i } 
		{\Phi}_{C} ,
\end{align*}
and the weight decomposition of $ {\SQvnrR}^{\mu}$ is now proved.
\end{proof}

Applying Proposition \ref{wt_shift} and Proposition \ref{mod_decp3},  
we have the following.
\begin{cor}\label{irr_co}
Assume $\lambda, \mu \in \CMN(n, r)$ and {$\lambda = (r, 0, \cdots, 0) $}.
Then each weight vector in   {$\SQvnrR_{\lambda}^{\mu}$}  is a highest weight vector.
\end{cor}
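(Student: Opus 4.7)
The plan is to combine the weight-shift formula from Proposition~\ref{wt_shift} with the weight-space decomposition from Proposition~\ref{mod_decp3}, and then exploit the fact that $\lambda = (r,0,\ldots,0)$ is an extremal composition in $\CMN(n,r)$.

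First I would take an arbitrary nonzero $m \in \SQvnrR_{\lambda}^{\mu}$. By Proposition~\ref{wt_shift}, for each $1 \le j \le n-1$ we have
\begin{align*}
{\genE}_{j} m \in \SQvnrR_{{v}^{\alpha_j}\lambda}, \qquad
{\genE}_{\ol{j}} m \in \SQvnrR_{{v}^{\alpha_j}\lambda},
\end{align*}
where $\alpha_j = \bs{\ep}_j - \bs{\ep}_{j+1}$. Since the actions of ${\genE}_j$ and ${\genE}_{\ol{j}}$ preserve each summand $\SQvnrR^{\mu}$ (as seen in the proof of Proposition~\ref{mod_decp3}), the images in fact land in $\SQvnrR_{{v}^{\alpha_j}\lambda}^{\mu}$.

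Next I would observe that $\lambda + \alpha_j$ has a strictly negative entry in position $j+1$: for $j=1$ the shifted tuple is $(r+1,-1,0,\ldots,0)$, and for $j \ge 2$ it is $(r,0,\ldots,1,-1,\ldots,0)$ with a $-1$ in slot $j+1$. In particular, $\lambda + \alpha_j \notin \CMN(n,r)$. By Proposition~\ref{mod_decp3}, the weight spaces of $\SQvnrR$ are parametrized by compositions in $\CMN(n,r)$, so every weight space indexed by a tuple outside $\CMN(n,r)$ is zero. Hence
\begin{align*}
{\genE}_{j} m = 0, \qquad {\genE}_{\ol{j}} m = 0 \qquad \text{for all } 1 \le j \le n-1,
\end{align*}
which, together with $m \ne 0$ and $m \in \SQvnrR_{\lambda}$, is exactly the definition of a highest weight vector of highest weight ${v}^{\lambda}$.

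This argument is essentially bookkeeping and should be quite short; there is no real technical obstacle, since all the substantive work has already been done in Proposition~\ref{wt_shift} (to pin down how $\genE_j$ and $\genE_{\ol j}$ shift weights) and in Proposition~\ref{mod_decp3} (to identify the precise set of weights appearing in $\SQvnrR$). The only minor point to be careful about is that the weight shift for the odd generator ${\genE}_{\ol{j}}$ is the same as that for ${\genE}_{j}$, so the same extremality argument applies uniformly to both families.
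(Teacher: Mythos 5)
Your argument is correct and is exactly the one the paper intends: the paper derives this corollary with the single remark ``Applying Proposition~\ref{wt_shift} and Proposition~\ref{mod_decp3}'', and your write-up simply fills in the same reasoning, namely that $\genE_j$ and $\genE_{\ol{j}}$ shift the weight by $v^{\alpha_j}$ while $\lambda+\alpha_j$ acquires a negative entry and hence indexes a zero weight space since all weights of $\SQvnrR$ come from $\CMN(n,r)$. No gaps; the only remark is that you do not even need the observation that the action preserves $\SQvnrR^{\mu}$, since vanishing of the full weight space $\SQvnrR_{v^{\lambda+\alpha_j}}$ already forces $\genE_j m = \genE_{\ol{j}} m = 0$.
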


\begin{prop}\label{f_order}
Assume {$\lambda, \mu \in \CMN(n, r)$},  {$\lambda_{i+1} = 0$},  
and  assume  {$m \in {\SQvnrR}^{\mu}_{\lambda}$} is a non-zero vector.
Then for any {$k \in \NN^+$}, 
\begin{align*}
	{\genF}_{i} {\genF}_{i+1}  {\genF}_{i}^{k} m = {\cft}_{k} {\genF}_{i+1}   {\genF}_{i}^{k+1} m ,
\end{align*}
	where each {${\cft}_{k} \in \Qv$}.
\end{prop}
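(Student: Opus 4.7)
The plan is to proceed by induction on $k$, using only the quantum Serre relation (QQ6) for the $\genF$-generators together with the single vanishing $\genF_{i+1} m = 0$. The latter is the crucial use of the hypothesis $\lambda_{i+1}=0$: since $\SQvnrR$ is a polynomial weight module by Proposition \ref{module_poly_high}, the proposition recalled from \cite{DLZ} at the start of this section forces $\genF_{i+1} \SQvnrR^\mu_\lambda = 0$ whenever $\lambda_{i+1}=0$.

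For the base case $k = 1$, I would apply the relation
\[
\genF_i^2 \genF_{i+1} - (v + v^{-1}) \genF_i \genF_{i+1} \genF_i + \genF_{i+1} \genF_i^2 = 0
\]
directly to $m$. Because $\genF_{i+1} m = 0$, the leading term disappears and one reads off $\genF_i \genF_{i+1} \genF_i m = (v+v^{-1})^{-1} \genF_{i+1} \genF_i^2 m$, so $\cft_1 = (v+v^{-1})^{-1}$.

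For the inductive step, assuming $\genF_i \genF_{i+1} \genF_i^{k-1} m = \cft_{k-1} \genF_{i+1} \genF_i^k m$, I would apply the same Serre relation not to $m$ but to the vector $\genF_i^{k-1} m$, yielding
\[
\genF_i^2 \genF_{i+1} \genF_i^{k-1} m - (v + v^{-1}) \genF_i \genF_{i+1} \genF_i^k m + \genF_{i+1} \genF_i^{k+1} m = 0.
\]
Rewriting the first term as $\genF_i \cdot (\genF_i \genF_{i+1} \genF_i^{k-1} m) = \cft_{k-1} \genF_i \genF_{i+1} \genF_i^k m$ via the inductive hypothesis and solving for $\genF_i \genF_{i+1} \genF_i^k m$ produces both the desired identity and the recurrence
\[
\cft_k = \frac{1}{v + v^{-1} - \cft_{k-1}}.
\]

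The main point requiring attention is to verify that this recurrence stays well-defined in $\Qv$, i.e.\ that $v + v^{-1} - \cft_{k-1}$ never vanishes. I expect this to follow from the closed form $\cft_k = v(v^{2k}-1)/(v^{2k+2}-1)$, which is readily checked by induction to satisfy the recursion and to be a nonzero element of $\Qv$ for every $k \ge 1$. This is the only nontrivial verification; the rest of the argument reduces to a direct interplay between the quantum Serre relation and the single weight-space vanishing $\genF_{i+1} m = 0$.
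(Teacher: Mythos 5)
Your proof is correct and follows essentially the same route as the paper's: apply the quantum Serre relation (QQ6) for the $\genF$-generators to $\genF_i^{k-1}m$, kill the leading term using $\genF_{i+1}m=0$ (forced by $\lambda_{i+1}=0$), and induct, which yields exactly the recurrence $\cft_k=(v+v^{-1}-\cft_{k-1})^{-1}$ with $\cft_1=(v+v^{-1})^{-1}$ as in the paper. Your additional check that the denominator never vanishes, via the closed form $\cft_k=v(v^{2k}-1)/(v^{2k+2}-1)$, addresses a point the paper leaves implicit but does not change the argument.
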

\begin{proof}
We prove it by induction.
For the non-zero {$m \in {\SQvnrR}^{\mu}_{\lambda}$},
the second relation   in (QQ6) (by setting $j=i+1$)  leads
\begin{align*}
{\genF}_{i}^2 {\genF}_{i+1} m - ( {v} + {v}^{-1} ) {\genF}_{i} {\genF}_{i+1} {\genF}_{i} m + {\genF}_{i+1}  {\genF}_{i}^2 m = 0. 
\end{align*}
Considering   {$\lambda_{i+1} = 0$} implies 	{${\genF}_{i+1} m = 0$},
it follows
\begin{align*}
( {v} + {v}^{-1} ) {\genF}_{i} {\genF}_{i+1} {\genF}_{i} m =  {\genF}_{i+1}  {\genF}_{i}^2 m  
\end{align*}
and
\begin{align*}
	{\genF}_{i} {\genF}_{i+1} ({\genF}_{i} m) =  \frac{1}{( {v} + {v}^{-1} )}  {\genF}_{i+1}  {\genF}_{i} ( {\genF}_{i} m ) ,
\end{align*}
which means the result holds for the case $k=1$.
Assume it holds for any $k \in \NN^+$, 
then by multiplying ${\genF}_{i}^{k} m$ on the right side of each summand of the second equation of (QQ6), 
we conclude
\begin{align*}
0 
&= {\genF}_{i}^2 {\genF}_{i+1} ( {\genF}_{i}^{k} m )  - ( {v} + {v}^{-1} ) {\genF}_{i} {\genF}_{i+1} {\genF}_{i} ( {\genF}_{i}^{k} m )  + {\genF}_{i+1}  {\genF}_{i}^2 ( {\genF}_{i}^{k} m )  \\
&= {\genF}_{i} \cdot {\genF}_{i} {\genF}_{i+1} ( {\genF}_{i}^{k} m )  - ( {v} + {v}^{-1} ) {\genF}_{i} {\genF}_{i+1} {\genF}_{i} ( {\genF}_{i}^{k} m )  + {\genF}_{i+1}  {\genF}_{i}^2 ( {\genF}_{i}^{k} m )  \\
&= {\genF}_{i} \cdot {\cft}_{k} {\genF}_{i+1} {\genF}_{i} ( {\genF}_{i}^{k} m )  - ( {v} + {v}^{-1} ) {\genF}_{i} {\genF}_{i+1} {\genF}_{i} ( {\genF}_{i}^{k} m )  + {\genF}_{i+1}  {\genF}_{i}^2 ( {\genF}_{i}^{k} m )  \\
&= {( {\cft}_{k} - {v} - {v}^{-1} ) } {\genF}_{i} {\genF}_{i+1} {\genF}_{i} ( {\genF}_{i}^{k} m )  + {\genF}_{i+1}  {\genF}_{i}^2 ( {\genF}_{i}^{k} m ) ,
\end{align*}
which implies
\begin{align*}
{( {v} + {v}^{-1} - {\cft}_{k} ) } {\genF}_{i} {\genF}_{i+1} {\genF}_{i} ( {\genF}_{i}^{k} m )  =  {\genF}_{i+1}  {\genF}_{i}^2 ( {\genF}_{i}^{k} m ) 
\end{align*}
and
\begin{align*}
{\genF}_{i} {\genF}_{i+1} ( {\genF}_{i}^{k+1} m )  =  {( {v} + {v}^{-1} - {\cft}_{k} ) }^{-1} {\genF}_{i+1}  {\genF}_{i} ( {\genF}_{i}^{k+1} m ),
\end{align*}
hence the equation holds for $k+1$ and for any positive number.
\end{proof}

By Proposition \ref{f_order}, 
we conclude the following corollary by induction.
\begin{prop}\label{f_order2}
Assume {$\lambda, \mu \in \CMN(n, r)$},  {$\lambda_{i+1} = 0$},
{$p, k \in \NN^+$},
and {$m \in {\SQvnrR}^{\mu}_{\lambda}$} is a non-zero vector,
{$\lambda_{i } \ge k$}, and {$p \le k$}. 
Then we have
\begin{align*}
	{\genF}_{i} {\genF}_{i+1}^{p} {\genF}_{i}^{k} m    = {\cft}_{p, k} {\genF}_{i+1}^{p}  {\genF}_{i}^{k+1} m  ,
\end{align*}
where {${\cft}_{p, k} \in \Qv$}.
\end{prop}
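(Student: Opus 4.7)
The plan is to proceed by induction on $p$, with the base case $p=1$ being exactly Proposition~\ref{f_order}. For the inductive step, I introduce the quantum commutator
\[
C := {\genF}_i {\genF}_{i+1} - v^{-1} {\genF}_{i+1} {\genF}_i,
\]
which, thanks to the two Serre relations in (QQ6), satisfies clean $v$-commutation rules with both ${\genF}_i$ and ${\genF}_{i+1}$. Specifically, substituting ${\genF}_i^2 {\genF}_{i+1} = (v+v^{-1}) {\genF}_i {\genF}_{i+1} {\genF}_i - {\genF}_{i+1} {\genF}_i^2$ into ${\genF}_i C$ yields ${\genF}_i C = v C {\genF}_i$, and the analogous substitution using the companion Serre relation ${\genF}_{i+1}^2 {\genF}_i + {\genF}_i {\genF}_{i+1}^2 = (v+v^{-1}){\genF}_{i+1} {\genF}_i {\genF}_{i+1}$ gives $C {\genF}_{i+1} = v {\genF}_{i+1} C$.

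Since $\lambda_{i+1} = 0$ forces ${\genF}_{i+1} m = 0$, we have $Cm = -v^{-1} {\genF}_{i+1} {\genF}_i m$. Moreover, by iterating Proposition~\ref{f_order} in the equivalent form ${\genF}_{i+1} {\genF}_i^{j+1} m = {\cft}_j^{-1} {\genF}_i {\genF}_{i+1} {\genF}_i^j m$, I obtain
\[
{\genF}_i^k {\genF}_{i+1} {\genF}_i m = \gamma_k {\genF}_{i+1} {\genF}_i^{k+1} m, \qquad \gamma_k := {\cft}_1 {\cft}_2 \cdots {\cft}_k \in \Qv,
\]
with the convention $\gamma_0 = 1$.

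Combining the $v$-commutation rules for $C$ with the formula for $Cm$, I compute
\[
C {\genF}_{i+1}^{p-1} {\genF}_i^k m = v^{p-1-k} {\genF}_{i+1}^{p-1} {\genF}_i^k Cm = -v^{p-k-2} \gamma_k {\genF}_{i+1}^p {\genF}_i^{k+1} m.
\]
Writing ${\genF}_i {\genF}_{i+1} = C + v^{-1} {\genF}_{i+1} {\genF}_i$ and applying the inductive hypothesis ${\genF}_i {\genF}_{i+1}^{p-1} {\genF}_i^k m = {\cft}_{p-1,k} {\genF}_{i+1}^{p-1} {\genF}_i^{k+1} m$ (valid because $p-1 \le k$), I obtain
\begin{align*}
{\genF}_i {\genF}_{i+1}^p {\genF}_i^k m
&= C {\genF}_{i+1}^{p-1} {\genF}_i^k m + v^{-1} {\genF}_{i+1} {\genF}_i {\genF}_{i+1}^{p-1} {\genF}_i^k m \\
&= \bigl(v^{-1} {\cft}_{p-1,k} - v^{p-k-2} \gamma_k\bigr) {\genF}_{i+1}^p {\genF}_i^{k+1} m,
\end{align*}
completing the induction with ${\cft}_{p,k} := v^{-1} {\cft}_{p-1,k} - v^{p-k-2} \gamma_k \in \Qv$.

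The main obstacle is the verification of the commutation rule $C {\genF}_{i+1} = v {\genF}_{i+1} C$, which requires the ``swapped'' Serre relation of (QQ6) and involves a somewhat miraculous cancellation when one equates coefficients of ${\genF}_{i+1} {\genF}_i {\genF}_{i+1}$ on both sides. Once both commutation rules for $C$ are in place, the rest is a systematic bookkeeping of powers of $v$.
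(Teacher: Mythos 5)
Your proof is correct, but it takes a genuinely different route from the paper's. The paper runs a second-order induction on $p$: it right-multiplies the shifted Serre relation ${\genF}_{i+1}^2{\genF}_{i}-(v+v^{-1}){\genF}_{i+1}{\genF}_{i}{\genF}_{i+1}+{\genF}_{i}{\genF}_{i+1}^2=0$ by ${\genF}_{i+1}^{a-1}{\genF}_{i}^{k}m$ and deduces the case $p=a+1$ from the two cases $p=a-1$ and $p=a$, so it must establish both $p=1$ and $p=2$ as base cases, and it ends with the three-term recursion ${\cft}_{a+1,k}=(v+v^{-1}){\cft}_{a,k}-{\cft}_{a-1,k}$. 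You instead package the two Serre relations into the $v$-commutation rules ${\genF}_{i}C=vC{\genF}_{i}$ and $C{\genF}_{i+1}=v{\genF}_{i+1}C$ for the quantum commutator $C={\genF}_{i}{\genF}_{i+1}-v^{-1}{\genF}_{i+1}{\genF}_{i}$ (I checked both identities; they do follow from (QQ6) exactly as you describe), which reduces the argument to a first-order induction with the single base case $p=1$ and yields ${\cft}_{p,k}=v^{-1}{\cft}_{p-1,k}-v^{p-k-2}\gamma_k$; one can verify (e.g.\ via ${\cft}_{k}=\ARGSSTEP(k,v)/\ARGSSTEP(k+1,v)$) that this agrees with the paper's recursion. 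Your approach is more structural---$C$ is essentially the quantum root vector attached to $\alpha_i+\alpha_{i+1}$---and buys a cleaner induction at the cost of verifying the two commutation identities, while the paper's is more elementary but needs the two-step base. One cosmetic remark: you invoke Proposition~\ref{f_order} ``in the equivalent form'' involving ${\cft}_{j}^{-1}$, which strictly speaking requires ${\cft}_{j}\ne 0$ (true, but not asserted in that proposition); in fact the identity ${\genF}_{i}^{k}{\genF}_{i+1}{\genF}_{i}m=\gamma_k{\genF}_{i+1}{\genF}_{i}^{k+1}m$ with $\gamma_k={\cft}_1\cdots{\cft}_k$ follows by iterating the proposition in its stated forward direction, so no inversion is actually needed.
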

\begin{proof}
The case $p=1$ is Proposition \ref{f_order}.
By replacing {$i, j$} with {$i+1, i$} respectively for the second equation in (QQ6), 
we have  
\begin{equation}\label{qq6shift}
\begin{aligned}
	{\genF}_{i+1}^2 {\genF}_{i} - ( {v} + {v}^{-1} ) {\genF}_{i+1} {\genF}_{i} {\genF}_{i+1} + {\genF}_{i}  {\genF}_{i+1}^2 = 0.
\end{aligned}
\end{equation}
Multiplying   ${\genF}_{i}^{k} m$ on the right side of each summand of \eqref{qq6shift}, 
we obtain
\begin{align*}
	{\genF}_{i+1}^2 {\genF}_{i} {\genF}_{i}^{k} m  - ( {v} + {v}^{-1} ) {\genF}_{i+1} {\genF}_{i} {\genF}_{i+1} {\genF}_{i}^{k} m  + {\genF}_{i}  {\genF}_{i+1}^2 {\genF}_{i}^{k} m = 0.
\end{align*}
As a consequence, it follows
\begin{align*}
 {\genF}_{i}  {\genF}_{i+1}^2 {\genF}_{i}^{k} m 
 &=
 - {\genF}_{i+1}^2 \cdot {\genF}_{i} {\genF}_{i}^{k} m  + ( {v} + {v}^{-1} ) {\genF}_{i+1} \cdot  {\genF}_{i} {\genF}_{i+1} {\genF}_{i}^{k} m  \\
 &=
 - {\genF}_{i+1}^2  {\genF}_{i}^{k+1} m  + ( {v} + {v}^{-1} ) {\genF}_{i+1} \cdot {\cft}_{k} {\genF}_{i+1}   {\genF}_{i}^{k+1} m   \\
 &=  ( {v}   {\cft}_{k}  + {v}^{-1}   {\cft}_{k} - 1) {\genF}_{i+1}^2     {\genF}_{i}^{k+1} m ,
\end{align*}
which means the result holds for the case $p=2$.

Assume the result holds for the cases $p = a-1$ and   $p = a $.
Multiplying   $ {\genF}_{i+1}^{a-1} {\genF}_{i}^{k} m $ on the right side of each summand of \eqref{qq6shift}, 
we obtain
\begin{align*}
	{\genF}_{i+1}^2 {\genF}_{i} \cdot {\genF}_{i+1}^{a-1} {\genF}_{i}^{k} m   - ( {v} + {v}^{-1} ) {\genF}_{i+1} {\genF}_{i} {\genF}_{i+1} \cdot {\genF}_{i+1}^{a-1} {\genF}_{i}^{k} m   + {\genF}_{i}  {\genF}_{i+1}^2 \cdot  {\genF}_{i+1}^{a-1} {\genF}_{i}^{k} m = 0.
\end{align*}
then 
 \begin{align*}
	{\genF}_{i} {\genF}_{i+1}^{a+1} {\genF}_{i}^{k} m   
&=
	{\genF}_{i} {\genF}_{i+1}^{2}  \cdot   {\genF}_{i+1}^{a-1} {\genF}_{i}^{k} m  \\
&=-	{\genF}_{i+1}^2 \cdot {\genF}_{i} {\genF}_{i+1}^{a-1} {\genF}_{i}^{k} m   
	+ ( {v} + {v}^{-1} ) {\genF}_{i+1} \cdot {\genF}_{i}  {\genF}_{i+1}^{a} {\genF}_{i}^{k} m   \\
&=-	{\genF}_{i+1}^2 \cdot  {\cft}_{a-1, k} {\genF}_{i+1}^{a-1}  {\genF}_{i}^{k+1} m   
	+ ( {v} + {v}^{-1} ) {\genF}_{i+1}  \cdot {\cft}_{a, k} {\genF}_{i+1}^{a}  {\genF}_{i}^{k+1} m \\
&=   
	  ( -	  {\cft}_{a-1, k}  +  {v}  {\cft}_{a, k}  + {v}^{-1}  {\cft}_{a, k}   )    {\genF}_{i+1}^{a+1}  {\genF}_{i}^{k+1} m,
\end{align*}
which means the case {$p=a+1$} is proved, and hence we have proved the result for all {$p \in \NN^+$}.
\end{proof}

Recall that {${\bs{\alpha}}_{j} = {\bs{\ep}}_{j} - {\bs{\ep}}_{j+1}$}. 

For any {$  \Phi_{\Ad} \in \SQvnrR$},
denote $\psi( \Phi_{\Ad} ) $ to be the maximal index of nonzero entry of $\ro(A)$.
In general, denote  
{$\psi(\sum {k_{\Ad}} \Phi_{\Ad}) = \max \{ \psi(A) \where k_{\Ad} \ne 0\}$}.

\begin{thm}\label{thm_module_structure}
Assume {$r \ge 2$},    {$\omega=(r, 0, \cdots, 0) \in \CMN(n, r)$},
let $M$ be an irreducible submodule of   {$\SQvnrR$}, 
and {$\mu \in \CMN(n, r)$} satisfies {$M = M^{\mu}$}.
Then \\
{\rm(i)}	
	$M$ is a highest weight module with highest weight {$\omega$},
	and there is a vector {$m \in M_{\omega}$}  such that  {$M = \Uv m$}; \\
{\rm(ii)} {$M_{\omega} = \tspan\{m, K_{\ol{1}} m\} $}; \\
{\rm(iii)} 
for any  {$\lambda \in \wt(M)$} with {$\lambda - {\bs{\alpha}}_{i} \in \wt(M)$},
we have  {$ M_{\lambda - {\bs{\alpha}}_i} = {\genF}_{i} M_{\lambda} $}.
\end{thm}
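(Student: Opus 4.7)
For part (i), Proposition \ref{module_poly_high} already identifies $v^\omega$ with $\omega=(r,0,\ldots,0)$ as the unique maximal weight of $M$ and shows that $M$ is polynomial, so any non-zero $m\in M_\omega$ is a highest weight vector and irreducibility of $M$ forces $\Uv m=M$.

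For part (ii) the plan is to first prove $\genK_{\ol{i}}m=0$ for $2\le i\le n$ and then read off $M_\omega$ from the triangular decomposition. The base case $i=n$ is direct from Theorem \ref{mulformAonPhi}(4): for every $\Phi_{\Ad}\in\SQvnrR^\mu_\omega$ the entire $n$-th row of $A$ vanishes, so $\SE{A}-E_{n,k}$ acquires a negative entry (killing the first summation) and $\STEPP{a_{n,k}}=\STEPP{0}=0$ (killing the second). For $2\le j\le n-1$ I would induct downward using the five-term expression for $\genK_{\ol{j}}$ from Remark \ref{induct_all}; applied to $m$, the terms containing $\genE_j$ die because $m$ is highest, the terms containing $\genK_{\ol{j+1}}$ die by the inductive hypothesis, and the remaining term dies because polynomial weight forces $\genF_j m=0$ for $j\ge 2$ (the target weight $\omega-\alpha_j$ has a negative $j$-th coordinate).

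Given this vanishing, the triangular decomposition $\Uv=\Uv^-\Uv^0\Uv^+$ together with $\Uv^+m=0$ gives $M=\Uv^-\Uv^0 m$, and since $\Uv^-$ has weights in $-\bs{Q}^+\cup\{0\}$ the weight-$\omega$ component equals $\Uv^0 m$. By the anti-commutation in (QQ1) any ordered monomial in the $\genK_{\ol{i}}$'s applied to $m$ can be rearranged so that every factor with index $\ge 2$ moves rightmost and annihilates $m$, leaving $\genK_{\ol{1}}^a m$, which reduces modulo $\genK_{\ol{1}}^2=[r]_{v^2}$. This yields $M_\omega=\tspan\{m,\genK_{\ol{1}}m\}$; linear independence follows because a relation $\genK_{\ol{1}}m=cm$ would force $c^2=[r]_{v^2}$, contradicting Proposition \ref{fd_eigen} (which requires $\omega_1=\pm v$, i.e.\ $r=1$) or simply the non-squareness of $[r]_{v^2}$ in $\Qv$ for $r\ge 2$.

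Part (iii) is the genuine obstacle: one inclusion $\genF_i M_\lambda\subseteq M_{\lambda-\alpha_i}$ is Proposition \ref{wt_shift}, while the reverse is where the work lies. The plan is to transport the question to $U_v(\mathfrak{sl}_n)$ via $\pi_n$: as a finite-dimensional polynomial weight $U_v(\mathfrak{sl}_n)$-module $M$ is completely reducible into finite-dimensional simples $L(\mu^k)$, and because $|\lambda|=r$ is constant on $\wt(M)$ the $\Uv$-weight spaces coincide with the $U_v(\mathfrak{sl}_n)$-weight spaces. Restricting each summand to the $U_v(\mathfrak{sl}_2)$-subalgebra generated by $\genE_i,\genF_i,\genK_i\genK_{i+1}^{-1}$ and invoking the standard surjectivity of $\genF_i$ on $U_v(\mathfrak{sl}_2)$-strings, one obtains the desired identity after summation over $k$. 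The hard point I foresee is excluding the scenario in which $\lambda-\alpha_i$ appears in some summand $L(\mu^k)$ but $\lambda$ does not; this should be handled by combining the explicit action formula Theorem \ref{mulformAonPhi}(3) with the rearrangement results of Propositions \ref{f_order}--\ref{f_order2} to reduce the obstruction to a combinatorial statement about the basis $\{\Phi_{\Ad}\}$.
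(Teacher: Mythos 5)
Your parts (i) and (ii) are essentially sound, and (ii) genuinely departs from the paper: you first kill $\genK_{\ol{j}}m$ for $2\le j\le n$ (the base case $j=n$ via Theorem \ref{mulformAonPhi}(4) and the descent via Remark \ref{induct_all} both check out, since $\genE_j m=0$ and $\genF_j m=0$ for $j\ge 2$ by polynomiality of the weights), and then read $M_{\omega}=\Uv^0m=\tspan\{m,\genK_{\ol{1}}m\}$ off the triangular decomposition $\Uv=\Uv^-\Uv^0\Uv^+$. Be aware that this decomposition is nowhere proved in the paper and has to be imported from \cite{GJKK}; the paper instead obtains (ii) as a byproduct of the global construction described below, with no PBW input.

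The genuine gap is in (iii), and it is precisely the one you flag but do not close. In the $U_v(\mathfrak{sl}_2)_i$-string picture, $\genF_i M_{\lambda}=M_{\lambda-\bs{\alpha}_i}$ fails exactly when some string has its top at weight $\lambda-\bs{\alpha}_i$, i.e.\ when there is $0\ne w\in M_{\lambda-\bs{\alpha}_i}$ with $\genE_i w=0$ lying outside $\genF_i M_{\lambda}$. Nothing in the string combinatorics forbids this: such a top only requires $\lambda_i-\lambda_{i+1}\ge 2$, which occurs for many weights of $\SQvnrR^{\mu}$. Irreducibility of $M$ rules out a nonzero vector annihilated by \emph{all} $\genE_j,\genE_{\ol{j}}$, but not one annihilated by the single $\genE_i$, so the obstruction cannot be dismissed, and ``reduce to a combinatorial statement about $\{\Phi_{\Ad}\}$'' is a placeholder, not an argument. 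The paper sidesteps this entirely by arguing globally rather than weight-by-weight: it sets $L(m,\omega)=\tspan\{m,\genK_{\ol{1}}m\}$ and $L(m,\lambda-\bs{\alpha}_i)=\Uv^0\genF_i L(m,\lambda)$, uses (QQ4), (QQ6) together with Propositions \ref{f_order} and \ref{f_order2} to verify that $L=\bigoplus_{\lambda}L(m,\lambda)$ is stable under every generator --- the key inclusion being $\genE_i\genF_i^{k+1}L(m,\gamma)\subseteq\genF_i^{k}L(m,\gamma)$, culminating in \eqref{ind_k_L2}, which is exactly the ``no new $\genE_i$-highest vectors below $\omega$'' statement your plan needs --- and then lets irreducibility force $L=M$ once and for all, after which (i)--(iii) hold by construction. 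To salvage your route you would have to prove that inclusion anyway, so you may as well adopt the paper's top-down induction.
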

\begin{proof}
We choose a sequence of vectors {$\{ m_0, m_1, m_2, \cdots \} $} as:
\begin{enumerate}
\item
let $m_0 = \sum {k_{\Ad}} \Phi_{\Ad}$ be any nonzero vector of $M$,
\item
if $t = \psi(m_{j}) > 1$, set $m_{j+1} = {\genE}_{t - 1} m_{j}$,
\item
if $t = \psi(m_{j}) = 1$, set $m_{j+1} = 0$.
\end{enumerate}
It is clear that there are only finite number of nonzero {$m_j$}.
Let $m = m_{k}$ such that {$m_k \ne 0$} and  {$m_{k+1} = 0$},
then ${\genE}_{j} m = 0$,  ${\genE}_{\ol{j}} m = 0$  for all {$1 \le j \le n-1$}.
Theorem \ref{mulformAonPhi}\rm(2) shows that {$m = \sum k_{\Ad} {\Phi}_{\Ad}$} 
and each {$\Phi_{\Ad}$} satisfies {$\ro(A) = \omega$}.
Hence $m$ is a highest weight vector of highest weight $\omega$.

For any   {$\lambda \in \wt(M)$},
we define {$L(m, \lambda)$} by induction as:
\begin{enumerate}
\item
for the case {$\lambda = \omega$}, set {$L(m, \omega) = \tspan\{m, K_{\ol{1}} {m} \}$};
\item
 for any {$\lambda$} with {$\lambda - {\bs{\alpha}}_{i} \in \wt(M)$} 
 (or {$\lambda_i > 0$} equivalently),
set
{$
	L(m, \lambda - {\bs{\alpha}}_{i}) = \Uv^0 {\genF}_{i} L(m, \lambda) .
$}
\end{enumerate}
The last equation of  \rm{(QQ1)} shows that {$L(m, \omega) $} is a $\Uv^0$-module.

For any {$1 \le i \le n-1$} with {$\lambda_{i+1} = 0$},
and any {$p,k \in \NN^+$},
applying Proposition \ref{f_order2},
 we have 
\begin{align*}
	{\genF}_{i} {\genF}_{i+1}^{p} {\genF}_{i}^{k+1}L(m, \lambda)    
	  = {\cft}_{p, k} {\genF}_{i+1}^{p}  {\genF}_{i}^{k+1} L(m, \lambda)   
\end{align*}
and hence the notation {$L(m, \lambda)$} is well-defined.
It is easy to verify {$L(m, \lambda) \subseteq M_{\lambda}$}
by applying Theorem \ref{mulformAonPhi}.

We claim that for any {$\lambda \in \CMN(n, r)$},
we have 
\begin{align}
{\genE}_{i} {\genF}_{i} L(m, \lambda) \subseteq L(m, \lambda)  , \qquad
{\genF}_{i} {\genE}_{i} L(m, \lambda) \subseteq L(m, \lambda) .
\label{ind_k_L0}
\end{align}
It suffices to prove 
\begin{align}
{\genE}_{i} {\genF}_{i} {\genF}_{i}^{k} L(m, \gamma) \subseteq L(m, \gamma - k {\bs{\alpha}}_{i} )  , \qquad
{\genF}_{i} {\genE}_{i} {\genF}_{i}^{k}  L(m, \gamma) \subseteq L(m, \gamma - k{\bs{\alpha}}_{i} ),
\label{ind_k_L1}
\end{align}
for any $k \in \NN$ and {$\gamma \in \CMN(n, r)$} satisfying {$k \le\gamma_{i} $},  {$\gamma_{i+1} = 0$}.
Choose any {$w \in L(m, \gamma)$}, we have {${\genE}_{i} w = 0$}
and hence 
\begin{align*}
{\genE}_{i} {\genF}_{i} w 
&=  {\genF}_{i} {\genE}_{i} w
+  \frac{{\genK}_{i} {\genK}_{i+1}^{-1} - {\genK}_{i}^{-1}{\genK}_{i+1}}{{v} - {v}^{-1}} w  \\
&=   \frac{{\genK}_{i} {\genK}_{i+1}^{-1} - {\genK}_{i}^{-1}{\genK}_{i+1}}{{v} - {v}^{-1}} w \in  L(m, \gamma),
\end{align*}
which means
\begin{align*}
{\genE}_{i} {\genF}_{i}  L(m, \gamma) \subset L(m, \gamma)  , \qquad
{\genF}_{i} {\genE}_{i}  L(m, \gamma) \subset L(m, \gamma) .
\end{align*}
When $k = 1$,
we have
\begin{align*}
 & {\genF}_{i}  {\genE}_{i} {\genF}_{i} w 
 \in  L(m, \gamma  - {\bs{\alpha}}_{i} ),\\
&{\genE}_{i} {\genF}_{i} {\genF}_{i}  w 
=  {\genF}_{i} {\genE}_{i} {\genF}_{i}  w
+  \frac{{\genK}_{i} {\genK}_{i+1}^{-1} - {\genK}_{i}^{-1}{\genK}_{i+1}}{{v} - {v}^{-1}} {\genF}_{i}  w 
 \in  L(m, \gamma  - {\bs{\alpha}}_{i} ).
\end{align*}
By induction on $k$,
assume \eqref{ind_k_L1} holds for  {$k$},
then
\begin{align*}
 & {\genF}_{i}  {\genE}_{i} {\genF}_{i}^{k+1} w \in 
 		{\genF}_{i}   L(m, \gamma  - k {\bs{\alpha}}_{i} )
 		 =   L(m, \gamma  - (k+1) {\bs{\alpha}}_{i} )
 		,\\
&{\genE}_{i} {\genF}_{i} {\genF}_{i}^{k+1}  w 
=  {\genF}_{i} {\genE}_{i} {\genF}_{i}^{k+1}   w
+  \frac{{\genK}_{i} {\genK}_{i+1}^{-1} - {\genK}_{i}^{-1}{\genK}_{i+1}}{{v} - {v}^{-1}} {\genF}_{i}^{k+1}   w 
 \in  L(m, \gamma  - (k+1) {\bs{\alpha}}_{i} ) 
\end{align*}
and hence \eqref{ind_k_L1} holds for any  {$k \le\gamma_{i} $}.
As a consequence, 
  for {$\lambda \in \wt(M)$} with {$\lambda + {\bs{\alpha}}_{i} \in \wt(M)$} 
we have
\begin{align}
{\genE}_{i} L(m, \lambda) \subseteq L(m, \lambda + {\bs{\alpha}}_{i} ) .
\label{ind_k_L2}
\end{align}

Referring to \cite{DLZ},
by relations in (QQ3),  we have 
\begin{align*}
&   {\genE}_{\ol{i}}= {\genK}_{\ol{i}} {\genE}_{i}   {\genK}_{i} - {v} {\genE}_{i} {\genK}_{\ol{i}}  {\genK}_{i} , \qquad 
   {\genF}_{\ol{i}}  = - {\genK}_{\ol{i}} {\genF}_{i} {\genK}_{i}^{-1}+ {v} {\genF}_{i} {\genK}_{\ol{i}}{\genK}_{i}^{-1}.
\end{align*}
The relations in (QQ4) lead
\begin{align*}
&  {\genK}_{\ol{i+1}}  
= - {\genE}_{i} {\genF}_{\ol{i}} {\genK}_{i}+ {\genF}_{\ol{i}} {\genE}_{i}{\genK}_{i} +  {\genK}_{i+1}^{-1} {\genK}_{\ol{i}}  {\genK}_{i} , 
\end{align*}
By induction, 
  for {$\lambda \in \wt(M)$} with {$\lambda + {\bs{\alpha}}_{i} \in \wt(M)$} ,
we have
\begin{align}
{\genE}_{\ol{i}} L(m, \lambda) \subseteq L(m, \lambda + {\bs{\alpha}}_{i} ), 
\label{ind_k_L3}
\end{align}
and   for {$\lambda \in \wt(M)$} with {$\lambda - {\bs{\alpha}}_{i} \in \wt(M)$} ,
we have
\begin{align}
{\genF}_{\ol{i}} L(m, \lambda) \subseteq L(m, \lambda - {\bs{\alpha}}_{i} ).
\label{ind_k_L4}
\end{align}
By definition,  for any {$1 \le i \le n$} and  {$\lambda \in \wt(M)$}, we have
\begin{align}
{\genK}_{\ol{i}} L(m, \lambda) \subseteq L(m, \lambda ).
\label{ind_k_L5}
\end{align}

Set 
	{$	L=	\bigoplus_{\lambda \in \CMN(n, r)} L(m, \lambda) $}.
Summerizing  
\eqref{ind_k_L2},  \eqref{ind_k_L3},  \eqref{ind_k_L4}, \eqref{ind_k_L5}  
and the definition of {$L(m, \lambda)$},
we know {$L$} is a {$\Uv$}-module.
As {$L \subset M$} and $M$ is an irreducible {$\Uv$}-module, 
we have {$L = M$} and $M$ is generated by the highest weight vector $m$, 
which means \rm(i) is proved completely.

Furthermore, since {$L(m, \lambda) \subset M_{\lambda}$}, 
we have {$L(m, \lambda)= M_{\lambda}$} and \rm{(iii)} is proved.
Specially,   {$M_{\omega} = L(m, \omega) = \tspan\{m, K_{\ol{1}} m\} $},  then  {\rm(ii)} is proved.
\end{proof}

\begin{prop}\label{prop_module_top_structure}
Assume {$r \ge 1$}, 
$M$ is an irreducible submodule of {$\SQvnrR$}
with highest weight {$\omega = (r, 0, \cdots, 0) \in \CMN(n, r)$}.
Then    {$ \dim M_{\omega} = 2$} when {$r > 1$},
 {$ \dim M_{\omega} = 1$}  when {$r = 1$}.
\end{prop}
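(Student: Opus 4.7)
The plan is to combine three ingredients: an upper bound $\dim M_\omega \le 2$, the identity for $\genK_{\ol{1}}^2 m$ coming from (QQ1), and a case split on $r$. For $r \ge 2$, \thmref{thm_module_structure}(ii) yields $M_\omega = \tspan\{m,\ \genK_{\ol{1}} m\}$ directly. For $r = 1$, the same bound $\dim M_\omega \le 2$ follows from the explicit basis: $M \subset \SQvnrR^{\mu}$ for some $\mu = \bs{\ep}_j$, and the constraint $\ro(A) = \omega = \bs{\ep}_1$ restricts $A$ to $E_{1,j}$, so $\SQvnrR^{\mu}_\omega = \tspan\{\Phi_{(E_{1,j}|O)},\ \Phi_{(O|E_{1,j})}\}$. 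Applying the last relation of (QQ1) to the highest-weight vector $m$ (for which $\genK_1 m = v^r m$) gives
\[
\genK_{\ol{1}}^2 m \;=\; \frac{v^{2r}-v^{-2r}}{v^2-v^{-2}}\, m,
\]
which is a nonzero multiple of $m$ for every $r \ge 1$.

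\textbf{Case $r > 1$.} Assume for contradiction that $\genK_{\ol{1}} m = c\, m$ for some $c \in \Qv$. If $c = 0$ the displayed identity fails; if $c \ne 0$, Proposition~\ref{fd_eigen} forces $\omega_1 = \pm v$, i.e.\ $v^r = \pm v$, giving $r = 1$, again a contradiction. Hence $m$ and $\genK_{\ol{1}} m$ are linearly independent and $\dim M_\omega = 2$.

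\textbf{Case $r = 1$.} The identity specializes to $\genK_{\ol{1}}^2 m = m$, so on $M_\omega$ we have $\genK_{\ol{1}}^2 = I$ and $M_\omega = E_+ \oplus E_-$ splits into $\pm 1$-eigenspaces of $\genK_{\ol{1}}$. Suppose for contradiction $\dim M_\omega = 2$ and pick nonzero $m_\pm \in E_\pm$. For each $i \ge 2$, the anticommutation $\genK_{\ol{i}}\genK_{\ol{1}} + \genK_{\ol{1}}\genK_{\ol{i}} = 0$ from (QQ1) forces $\genK_{\ol{i}}$ to swap $E_+ \leftrightarrow E_-$; writing $\genK_{\ol{i}} m_+ = b_i m_-$ and $\genK_{\ol{i}} m_- = a_i m_+$, the identity $\genK_{\ol{i}}^2 = (\genK_i^2 - \genK_i^{-2})/(v^2 - v^{-2})$ vanishes on $M_\omega$ (since $\omega_i = 1$ for $i \ge 2$), yielding $a_i b_i = 0$, and pairwise anticommutation gives $a_i b_j + a_j b_i = 0$ for $i \ne j \ge 2$. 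By the highest-weight property of $m_\pm$ and the triangular decomposition of $\Uv$, the weight-$\omega$ component of $\Uv m_\pm$ is $\Uv^0 m_\pm$, so irreducibility $\Uv m_\pm = M$ requires some $b_{i_0} \ne 0$ and some $a_{j_0} \ne 0$ (otherwise $m_\mp$ cannot be reached from $m_\pm$). But $a_{i_0} b_{i_0} = 0$ gives $a_{i_0} = 0$, and for every $j \ne i_0$ the relation $a_j b_{i_0} + a_{i_0} b_j = a_j b_{i_0} = 0$ combined with $b_{i_0} \ne 0$ forces $a_j = 0$; altogether $a_j = 0$ for all $j \ge 2$, contradicting $a_{j_0} \ne 0$. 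Hence $\dim M_\omega = 1$.

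The main obstacle is the case $r = 1$: the inputs $\dim M_\omega \le 2$ and $\genK_{\ol{1}}^2|_{M_\omega} = I$ do not by themselves exclude a two-dimensional top. The essential extra input is the Clifford-algebra-like structure that $\genK_{\ol{2}}, \ldots, \genK_{\ol{n}}$ carry on $M_\omega$ (each squaring to $0$ and pairwise anticommuting), which combined with irreducibility of $M$ produces the bookkeeping contradiction in the relations $a_i b_j + a_j b_i = 0$ above.
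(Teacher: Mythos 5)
Your proof is correct, and it is worth recording that it does more than the paper's own argument. For $r>1$ you and the paper use the same mechanism: a one-dimensional top would force $\genK_{\ol{1}}m=cm$ with $c^2=\frac{v^{2r}-v^{-2r}}{v^2-v^{-2}}$, which has no square root in $\Qv$ unless $r=1$ (you route this through Proposition~\ref{fd_eigen}, the paper computes it inline; same content). The real difference is at $r=1$: the paper's proof only shows that \emph{if} $\dim M_{\omega}=1$ then $c=\pm1$ and $m=\SEE{m}\pm\genK_{\ol{1}}\SEE{m}$, i.e.\ it describes the one-dimensional case but never actually excludes $\dim M_{\omega}=2$, whereas your argument does exclude it, using the Clifford-type structure of $\genK_{\ol{2}},\dots,\genK_{\ol{n}}$ on $M_{\omega}$ ($\genK_{\ol{i}}^2=0$ there since $\omega_i=0$, pairwise anticommutativity, and the bookkeeping $a_ib_i=0$, $a_ib_j+a_jb_i=0$) against the requirement from irreducibility that both $m_-\in\Uv^0m_+$ and $m_+\in\Uv^0m_-$. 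That is a genuine and needed addition. Two small points you should make explicit: (i) when you ``pick nonzero $m_\pm\in E_\pm$'' you are assuming both eigenspaces are nonzero; this does follow, either because $\genK_{\ol{1}}$ is odd and swaps the even line $\Qv\Phi_{(E_{1,j}|O)}$ with the odd line $\Qv\Phi_{(O|E_{1,j})}$ so its matrix on a two-dimensional $M_{\omega}$ is off-diagonal with square $I$ and hence has eigenvalues $+1$ and $-1$ each once, or because in the degenerate case $E_-=0$ the anticommutation with $\genK_{\ol{1}}=I$ forces $\genK_{\ol{i}}|_{M_{\omega}}=0$ for $i\ge2$ and the same contradiction with irreducibility appears immediately; (ii) the identity $(\Uv m_\pm)_{\omega}=\Uv^0m_\pm$ rests on the triangular decomposition of $\Uv$, which the paper uses implicitly (Verma modules) but never states, so a citation to \cite{GJKK} is appropriate there.
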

\begin{proof}
By Theorem \ref{thm_module_structure},  for any non-zero  {$m \in M_{\omega}$},
we have {$M_{\omega} = \tspan\{m, K_{\ol{1}} m\} $},
then {$\dim M_{\omega} \le 2$}.

If {$\dim M_{\omega} =1$},
then for any non-zero {$m \in  M_{\omega} $},
there exists {$\cft \in \Qv$} such that 
{$ K_{\ol{1}} m= {\cft} m $} and $  K_{\ol{1}}^2 m = {\cft}^2 m $.
Assume {$m = \SEE{m} + \SOE{m}$},
with
 {$\SEE{m} = \sum_{A \in \SE{J}} k_{\Ad} \Phi_{\Ad}$}, 
 {$\SOE{m} = \sum_{A \in \SO{J}} k_{\Ad} \Phi_{\Ad}$}, 
each {$k_{\Ad} \in \Qv$}, 
 and {$\SE{J} = \{ {\Ad} \in \MNZ(n, r) \where \parity{\Ad} = 0\}$}, 
 {$\SO{J} = \{ {\Ad} \in \MNZ(n, r) \where \parity{\Ad} = 1\}$}.
Then by the last equation in  (QQ1)  and $\genK_1 m = {v}^r m$, we have
\begin{align*}
  K_{\ol{1}}^2 m 
&=      \frac{  {v}^{2r} -  {v}^{-2r} }{{v}^2 - {v}^{-2}}  m ,
\end{align*}
and 
\begin{align*}
 {\cft} &=   \pm \sqrt{   \frac{  {v}^{2r} -  {v}^{-2r} }{{v}^2 - {v}^{-2}}  } \in \Qv, 
\end{align*}
hence we have {$r = 1$} and {$\cft = \pm 1$}.
In this case,  by 
\begin{align*}
  K_{\ol{1}} ( \SEE{m} + \SOE{m})= {\cft}  \SEE{m} + {\cft}  \SOE{m} ,
\end{align*}
comparing the even and odd parts, we have
\begin{align*}
  K_{\ol{1}} \SEE{m}  =  {\cft}  \SOE{m} ,\qquad
  K_{\ol{1}} \SOE{m} = {\cft}  \SEE{m}   ,
\end{align*}
then we have 
{$ m = \SEE{m} + K_{\ol{1}} \SEE{m} $}
or
{$ m = \SEE{m} - K_{\ol{1}} \SEE{m} $}.

When {$r > 1$}, 
$  K_{\ol{1}}^2 m = {\cft}^2 m $ implies a contradiction that 
\begin{align*}
 {\cft} &=   \pm \sqrt{   \frac{  {v}^{2r} -  {v}^{-2r} }{{v}^2 - {v}^{-2}}  } \notin \Qv, 
\end{align*}
and hence {$\dim M_{\omega} = 2$}.
\end{proof}

We define the set
\begin{align*}
 \CMN(n, r_0| r_1) 
 	=\{ (\SE{\lambda} | \SO{\lambda})  \where  
 				\SE{\lambda} \in \CMN(n, r_0), 
 				\SO{\lambda} \in \CMN(n, r_1),
 				  0 \le \SO{\lambda}_i \le 1 \mbox{ for any }1 \le i \le n  
 				\}.
\end{align*}
For any {$\lambda \in \CMN(n, r_0|r_1)$} and {$1 \le i \le n$},
we denote {$\lambda_i = \SE{\lambda}_i  + \SO{\lambda}_i $}.
The parity of {$\lambda$} is defined to be {$\parity{\lambda} \equiv r_1 (\mathrm{mod} 2)$}.

Assume  {$\mu \in \CMN(n, r)$},
we denote  
\begin{align*}
J_{\mu} = \{  \lambda \in \CMN(n, r_0| r_1) \where r_0 + r_1 = r, \ \SE{\lambda} + \SO{\lambda} = \mu\}.
\end{align*}
It is trivial that {$\# J_{\mu} = 2^{d(\mu)}$},
where {$d(\mu) = \# \{ 1 \le i \le n \where {\mu}_{i} > 0\}$}.

\begin{thm}\label{mod_decomp_irr}
There is  {$\Uv$}-module decomposition
\begin{align*}
{\SQvnrR} = \bigoplus_{\mu \in \CMN(n, r)} \SQvnrR^{\mu},
\end{align*}
each submodule {$\SQvnrR^{\mu}$} is a highest weight module over {$\Uv$},
with highest weight {${v}^{{\omega}}$},  
where {${\omega} = (r, 0, \cdots, 0) \in \CMN(n, r)$}.
And each {$\SQvnrR^{\mu}$} could be decomposed into the  direct sum of irreducible submodules:
\begin{align*}
{\rm(1)  } \qquad & \SQvnrR^{\mu} 
=		( \Uv ( {\Phi}_{A_{\mu}} + K_{\ol{1}} {\Phi}_{A_{\mu}} ) )
				\bigoplus 
		( \Uv ( {\Phi}_{A_{\mu}} - K_{\ol{1}} {\Phi}_{A_{\mu}} ) ) , \qquad \mbox{ when } r=1;\\
{\rm(2) } \qquad &
\SQvnrR^{\mu}
= \bigoplus_{ \lambda \in J_{\mu},  \  \parity{\lambda} = 0} 
\Uv   {\Phi}_{A_{\lambda}} ,\qquad
\mbox{ when }  r>1,
\end{align*}
where {$A_{\mu} = (\sum_{i=1}^{n} {\mu}_{i} E_{1, i} | O)$} 
for each {$\mu \in \CMN(n, r)$},
{$A_{\lambda} = (\sum_{i=1}^{n} \SE{\lambda}_{i} E_{1, i} | \sum_{i=1}^{n} \SO{\lambda}_{i} E_{1, i})$} for each {$ \lambda \in J_{\mu}$}.
In the meantime, the decomposition of {${\SQvnrR}$} as {$\Uv$}-modules 
is also a complete decomposition of the regular representation of  {${\SQvnrR}$}.
\end{thm}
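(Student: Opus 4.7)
My plan is to leverage Proposition \ref{mod_decp3}, which already gives the coarse decomposition $\SQvnrR = \bigoplus_{\mu} \SQvnrR^\mu$, and then decompose each block $\SQvnrR^\mu$ separately. The key idea is to reduce the decomposition to a computation on the top weight space $\SQvnrR^\mu_\omega$, where $\omega = (r,0,\ldots,0)$. A basis for this top space consists of $\Phi_{\Ad}$ with $\co(A) = \mu$ and $\ro(A) = \omega$; these $A$'s have zero below the first row, and the first row is determined by a choice $(a^{\bar 0}_{1,k}, a^{\bar 1}_{1,k})$ with $a^{\bar 0}_{1,k} + a^{\bar 1}_{1,k} = \mu_k$ and $a^{\bar 1}_{1,k} \in \ZG$. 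Thus $\dim \SQvnrR^\mu_\omega = 2^{d(\mu)} = |J_\mu|$, with basis exactly $\{\Phi_{A_\lambda} : \lambda \in J_\mu\}$. By Corollary \ref{irr_co}, every vector in this top space is a highest weight vector, so each generates a highest weight submodule.

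For the case $r = 1$, every $\mu \in \CMN(n,1)$ has $d(\mu) = 1$, so $\SQvnrR^\mu_\omega$ is spanned by $\Phi_{A_\mu}$ and $K_{\bar 1}\Phi_{A_\mu}$. Using the last relation of (QQ1) together with $\genK_1 \Phi_{A_\mu} = v \Phi_{A_\mu}$, I compute $\genK_{\bar 1}^2 \Phi_{A_\mu} = \tfrac{v^2 - v^{-2}}{v^2 - v^{-2}} \Phi_{A_\mu} = \Phi_{A_\mu}$, so $\Phi_{A_\mu} \pm \genK_{\bar 1}\Phi_{A_\mu}$ are $\pm 1$-eigenvectors of $\genK_{\bar 1}$. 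Proposition \ref{prop_module_top_structure} says irreducible submodules have $1$-dimensional top when $r = 1$, so each eigenvector generates an irreducible submodule; since they have distinct eigenvalues under $\genK_{\bar 1}$, these two submodules are distinct and together exhaust $\SQvnrR^\mu$.

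For $r > 1$, Proposition \ref{prop_module_top_structure} tells us each irreducible component has a $2$-dimensional top weight space, so the count of irreducible summands of $\SQvnrR^\mu$ must be $2^{d(\mu)-1}$, which exactly matches the number of even-parity $\lambda \in J_\mu$. The strategy is: pick each even $\lambda \in J_\mu$ as a generator, and show (i) $\Uv \Phi_{A_\lambda}$ is irreducible, and (ii) the sum $\sum_{p(\lambda) = 0} \Uv \Phi_{A_\lambda}$ is direct and exhausts $\SQvnrR^\mu$. For (i), by Theorem \ref{thm_module_structure} the top weight space of $\Uv \Phi_{A_\lambda}$ is spanned by $\Phi_{A_\lambda}$ and $\genK_{\bar 1}\Phi_{A_\lambda}$; I will show via the $\genK_{\bar n}$-formula of Theorem \ref{mulformAonPhi} together with the inductive expression for $\genK_{\bar j}$ from Remark \ref{induct_all} that $\genK_{\bar 1}\Phi_{A_\lambda}$ is a nonzero combination of basis vectors $\Phi_{A_{\lambda'}}$ where $\lambda'$ differs from $\lambda$ by flipping parity bits in the first row, so its top space is genuinely $2$-dimensional and by Proposition \ref{prop_module_top_structure} the module is irreducible. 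For (ii), this pairing sends the even $\lambda$ to a combination of odd $\lambda'$'s, so the top spaces of the $2^{d(\mu)-1}$ submodules $\Uv \Phi_{A_\lambda}$ are pairwise disjoint (even $\lambda$'s are distinct) and together contribute $2 \cdot 2^{d(\mu)-1} = 2^{d(\mu)}$ linearly independent vectors — all of $\SQvnrR^\mu_\omega$. A clean way to prove direct-sum is to use a parity/triangularity argument: each $\Uv \Phi_{A_\lambda}$ is determined uniquely on the top space by its even component $\Phi_{A_\lambda}$, which is its own label.

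The main obstacle will be step (i): controlling $\genK_{\bar 1}\Phi_{A_\lambda}$ explicitly enough to verify both non-triviality and the parity-flipping behavior. The formula in Theorem \ref{mulformAonPhi} is stated for $\genK_{\bar n}$ (acting on the last row), so obtaining the corresponding action for $\genK_{\bar 1}$ requires either a symmetric derivation or careful use of Remark \ref{induct_all} to express $\genK_{\bar 1}$ via $\genE_j, \genF_j, \genK_j^{\pm 1}$ and $\genK_{\bar n}$; the computation is combinatorially heavy but mechanical. Finally, the statement that this is a complete decomposition of the regular representation of $\SQvnrR$ itself follows once semisimplicity is established: because the $\Uv$-action factors through $\SQvnrR$ via $\bs{\eta}_r \bs{\xi}_n$, the $\Uv$-submodules identified above are $\SQvnrR$-submodules (indeed, the weight decomposition is controlled by the $\Phi_\mu$ which lie in the image), and since their total dimension already accounts for $\dim \SQvnrR$, no further refinement is possible — giving simultaneously the $\Uv$-decomposition and the complete decomposition of the regular $\SQvnrR$-module, and thus the semisimplicity of $\SQvnrR$.
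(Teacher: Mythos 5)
Your proposal follows essentially the same route as the paper: reduce to the blocks $\SQvnrR^{\mu}$ via Proposition \ref{mod_decp3}, identify the top weight space $\SQvnrR^{\mu}_{\omega}$ with basis $\{\Phi_{A_{\lambda}}\mid\lambda\in J_{\mu}\}$ of dimension $2^{d(\mu)}$, split by $\genK_{\ol{1}}$-eigenvectors when $r=1$, and match the $2$-dimensional tops $\tspan\{\Phi_{A_{\lambda}},\genK_{\ol{1}}\Phi_{A_{\lambda}}\}$ against a dimension count when $r>1$. The only difference is that you make explicit the parity-flipping behaviour of $\genK_{\ol{1}}$ needed to see that the tops of the summands are linearly independent and that each $\Uv\Phi_{A_{\lambda}}$ is irreducible, a step the paper leaves implicit behind its citation of Theorem \ref{thm_module_structure} and the bare dimension count; this is a legitimate (and welcome) elaboration rather than a different argument.
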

\begin{proof}
By Proposition \ref{mod_decp3}
and Theorem \ref{thm_module_structure},
for any {$\mu \in \CMN(n, r)$}, 
we only need to discuss the decomposition of {$ \SQvnrR^{\mu}$} as {$\Uv$}-module.

For each {$\lambda \in J_{\mu}$}, 
we denote {$A_{\lambda} = (\sum_{i=1}^{n} \SE{\lambda}_{i} E_{1, i} | \sum_{i=1}^{n} \SO{\lambda}_{i} E_{1, i})$}.
Then the highest weight space of {$ \SQvnrR^{\mu}$}  is 
\begin{align*}
\SQvnrR^{\mu}_{{\omega}} 
= {\tspan}  \{ {\Phi}_{A_{\lambda}} \where \lambda \in J_{\mu} \}.
\end{align*}
For any {$ \lambda \in J_{\mu} $},  
denote  {$L(A_{\lambda}) = \Uv  {\Phi}_{A_{\lambda}} $}.
By Theorem \ref{thm_module_structure} 
and Proposition \ref{prop_module_top_structure}, 
when {$r = 1$}, 
{$( {\Phi}_{A_{\lambda}} \pm K_{\ol{1}} {\Phi}_{A_{\lambda}} )$}
are the eigenvectors of {$K_{\ol{1}}$},
hence $L(A_{\lambda})$ could be decomposed into a direct sum of two irreducible submodules:
\begin{align*}
L(A_{\lambda}) = {\Uv} ( {\Phi}_{A_{\lambda}} + K_{\ol{1}} {\Phi}_{A_{\lambda}} ) 
	\bigoplus {\Uv} ({\Phi}_{A_{\lambda}} - K_{\ol{1}} {\Phi}_{A_{\lambda}}).
\end{align*}
When {$r > 1$}, $L(A_{\lambda})$ is irreducible, 
and 
\begin{align*}
{L(A_{\lambda})}_{{\omega}} = \tspan \{    {\Phi}_{A_{\lambda}} ,  K_{\ol{1}} {\Phi}_{A_{\lambda}} \}, \qquad
\sum_{ \lambda \in J_{\mu},  \  \parity{\lambda} = 0}  \dim {L(A_{\lambda})}_{{\omega}}
 = \dim \SQvnrR^{\mu}_{{\omega}} .
\end{align*}
Hence we have module decomposition
\begin{align*}
\SQvnrR^{\mu}
= \bigoplus_{ \lambda \in J_{\mu},  \  \parity{\lambda} = 0} 
\Uv   {\Phi}_{A_{\lambda}} ,
\end{align*}
where each {$\Uv   {\Phi}_{A_{\lambda}} $} is an irreducible  {$\Uv$}-module.
\end{proof}
\spaceintv
\bibliographystyle{unsrt}


\iftrue

\fi
\end{document}